\newcommand{\bx}{{\mathbf{x}}}
\DeclarePairedDelimiter\ceil{\lceil}{\rceil}
\newtheorem{theorem}{Theorem}
\newtheorem{proposition}[theorem]{Proposition}
\newtheorem{lemma}[theorem]{Lemma}
\newtheorem{remark}[theorem]{Remark}
\begin{document}
	
	\title{\vspace{-0.1cm} 
		Asynchronous Optimization over Graphs:    Linear Convergence under Error Bound Conditions
	}
	\author{Loris Cannelli, Francisco Facchinei$^\ast$, Gesualdo Scutari$^\ast$, and Vyacheslav Kungurtsev\vspace{-0.6cm}
		\thanks{$^\ast$Facchinei and Scutari contributed equally to this paper.\newline Cannelli is with Istituto Dalle Molle di studi sull'Intelligenza Artificiale (IDSIA), USI/SUPSI, Lugano, Switzerland, and with the School of Industrial Engineering, Purdue University West-Lafayette, IN, USA; email: \texttt{loris.cannelli@idsia.ch}. Scutari is with the School of Industrial Engineering, Purdue University, West-Lafayette, IN, USA; email: \texttt{gscutari@purdue.edu}. Facchinei is  with the Dept. of Computer, Control, and Management Engineering, University of Rome La Sapienza,  Rome, Italy; email: \texttt{francisco.facchinei@uniroma1.it}. Kungurtsev is with the Dept. of Computer Science, Czech Technical University in Prague, Prague, Czech Republic; email:
			\texttt{vyacheslav.kungurtsev@fel.cvut.cz}. \newline The work of  Cannelli and Scutari was supported by the USA NSF under Grants  CIF 1719205, and CMMI 1832688; and   the ARO under the Grant W911NF1810238. Kungurtsev was supported by the OP VVV project CZ.02.1.01/0.0/0.0/16\_019/0000765 ``Research Center for Informatics''.\vspace{-0.4cm}
		}}
		
		\IEEEpeerreviewmaketitle
		\maketitle
		\begin{abstract}
			We consider convex and nonconvex constrained optimization with a partially separable objective function:
			agents  minimize the sum of local objective functions, each of which  is  known  only by the associated agent  and   depends on the variables of that agent and those of  a few others.
			This partitioned setting arises in several applications of practical interest.
			We propose what is, to the best of our knowledge, the first distributed, asynchronous  algorithm with rate guarantees for this  class of problems. When the objective function is nonconvex, the algorithm provably converges  to a stationary solution at a sublinear rate whereas linear rate is achieved  
			under the renowned Luo-Tseng   error bound condition (which is less stringent than strong convexity).  Numerical results   on matrix completion and LASSO problems show the effectiveness of our method.
		\end{abstract}
		\begin{IEEEkeywords}
			Asynchronous algorithms; error bounds; linear rate; multi-agent systems; nonconvex optimization. \vspace{-0.3cm}
		\end{IEEEkeywords}
		\IEEEpeerreviewmaketitle
		\section{Introduction}
		\label{introduction}
		\IEEEPARstart{W}e study distributed, nonsmooth, nonconvex optimization   with 
		a partially separable sum-cost function. 
		Specifically,   consider a set of $N$ agents, each of them controlling/updating  a subset of the $n$ variables $\mathbf{x}\in\mathbb{R}^n$. Partitioning $\mathbf{x}=(\mathbf{x}_1^T,\ldots,\mathbf{x}_N^T)^T $,    $\mathbf{x}_i\in\mathbb{R}^{n_i}$ is the block of variables  owned by agent $i\in\mathcal{N}\triangleq\{1,\ldots,N\}$, with $\sum_i n_i =n$.
		All agents cooperatively aim at solving the following problem:
		\begin{equation}
			\begin{array}{rl}
				\underset{\mathbf{x}_i\in \mathcal{X}_i, i\in \mathcal{N}}{\min} & V(\mathbf{x}) \triangleq \underbrace{\sum\limits_{i=1}^Nf_i(\mathbf{x}_{\mathcal{N}_i})}_{\triangleq F(\mathbf{x})}+\underbrace{\sum\limits_{i=1}^Ng_i(\mathbf{x}_i)}_{\triangleq G(\mathbf{x})},
			\end{array}\tag{P}
			\label{problem}
		\end{equation}
		where  $\mathcal{N}_i$ denotes a small subset of $\mathcal{N}$ including the index $i$ and  {$\mathbf{x}_{\mathcal{N}_i}\triangleq[\mathbf{x}_j]_{j\in\mathcal{N}_i}$} denotes the column vector containing the blocks of  $\mathbf{x}$  indexed by $\mathcal{N}_i$;  $\mathcal{X}_i\subseteq \mathbb{R}^{n_i}$ is a closed  convex set; $f_i$ is a smooth (nonconvex) function that  depends only on   $\mathbf{x}_{\mathcal{N}_i}$; and $g_i$ is a convex (nonsmooth) function, instrumental to encode structural constraints on the solution,
		such as sparsity. Both $f_i$ and $g_i$ are assumed to be known only by agent $i$.
		
		The above  formulation   is motivated by a variety of applications of practical interest. For instance,
		 loss functions arising from  many machine learning problems  have the ``sparse'' pattern of $V$ in \eqref{problem}:    $n$ and $N$ are both very large but each  $f_i$ depends only on a   small number of components of $\mathbf{x}$, i.e., each subvector $\mathbf{x}_{\mathcal{N}_i}$  contains just a few components of $\mathbf{x}$. The same partitioned structure 	in  \eqref{problem} is suitable also to   model networked systems 	wherein agents are connected through a 	physical communication network and can communicate only with their immediate neighbors. In this setting, often $\mathcal{N}_i$ represents the set of neighbors of agent $i$ (including agent $i$ itself). Examples of such applications include
			 resource allocation problems and network utility maximization \cite{carli2013distributed}, state estimation in power networks \cite{kekatos2013distributed}, cooperative localization in wireless networks \cite{erseghe2012distributed}, and map building in robotic networks. Some concrete instances of Problem \eqref{problem} are discussed in Sec. \ref{prob_stat}. \vspace{-0.3cm}
		
	\subsection{ Major contributions}  \vspace{-0.1cm}
		 We focus on the design of distributed, asynchronous algorithms for \eqref{problem}, in the following sense:   i) Agents can update their block-variables at any time, without any coordination; and ii) when updating their own variables, agents can use a delayed out-of-sync information from the others.
		No constraint is imposed on the delay profiles: delays can be arbitrary, possibly  time-varying (but bounded). This model captures  several forms of asynchrony: some agents execute more iterations than others; some agents communicate more frequently than others; and inter-agent communications can be unreliable and/or subject to unpredictable, unknown, time-varying delays.
		
		While  several forms of asynchrony have been studied in the literature--see Sec. \ref{sec:related-works} for an overview of most relevant results--we are not aware of any distributed scheme that is compliant to the asynchronous  model (i)-(ii) and tailored to the \emph{partitioned} (nonconvex)  {distributed} formulation \eqref{problem}.  This paper fills this gap and proposes a general distributed, asynchronous algorithmic framework for   convex and nonconvex instances of \eqref{problem}.
	  The algorithm builds on Successive Convex Approximation (SCA) techniques:  agents solve asynchronously  [in the sense (i) and (ii) above] strongly convex approximations of the original problem \eqref{problem} by using (possibly) outdated information on the variables and  the gradients of the other agents.  		No specific activation mechanism for the agents' updates, coordination,  or communication protocol is assumed, but only some  mild conditions ensuring that  information used in the updates does not become infinitely old. For nonconvex instances of $V$, we prove that i)
	   every limit point of the sequence generated by the proposed asynchronous algorithm is a stationary solution  of \eqref{problem}; and ii) a suitable  measure of stationarity    vanishes at  a sublinear rate. When $V$ further satisfies  the  Luo-Tseng error bound condition \cite{luo1993error,luo1992linear}, both the sequence   and the objective value  converge at an R-linear rate ({when $V$ is nonconvex, convergence is to stationary solutions).}  This error bound condition is weaker than strong convexity and it is satisfied by a variety of    problems of interest, such as  LASSO, Group LASSO, and Logistic Regression, just to name a few (cf. Sec. \ref{sec:assumptions}). While linear convergence under error bounds  has been proved    for many {\it centralized} algorithms   \cite{luo1993error,tseng1991rate,tseng2009coordinate,zhang2013linear,drusvyatskiy2018error}, we are not aware of any such a result in the distributed setting; current works require  strong convexity to establish  linear rate of  synchronous and asynchronous {\it distributed }algorithms (see, e.g., \cite{Tian-ASYSONATA18,sun2019convergence,shi2015extra} and references therein).
 As a byproduct, our results provide also a  positive answer to the open question whether linear convergence could be proved for {\it distributed} asynchronous algorithms solving  highly dimensional empirical risk minimization problems,   such as LASSO and Logistic Regression, a fact that was empirically observed but, to our knowledge,  never proved.
	
	     \vspace{-0.3cm}

		\subsection{Related Works}\label{sec:related-works}    
		Since the seminal work \cite{tsitsiklis1986distributed}, asynchronous parallelism has been applied to several centralized solution methods, including (randomized) block-coordinate descent schemes \cite{tseng1991rate,tsitsiklis1986distributed,liu2015asyspcd,cannelli2019math,DavisEdmundsUdell,Bertsekas_Book-Parallel-Comp}, and stochastic gradient algorithms \cite{Hogwild!,lian2015asynchronous}. However, those methods  are not applicable to Problem \eqref{problem}
		, since they would require
  each agent to know
 the entire objective function $V$. 
		

		{\it Distributed} schemes exploring (some form of) asynchrony have been studied in \cite{iutzeler2013asynchronous,wei20131,bianchi2016coordinate,srivastava2011distributed,notarnicola2017asynchronous,xu2018convergence,notarnicola2016randomized,nedic2011asynchronous,wang2015cooperative,li2016distributed,tsianos2012distributed,tsianos2011distributed,lin2016distributed,doan2017impact,zhao2015asynchronous,kumar2017asynchronous,wu2018decentralized,peng2016arock,bof2017newton,hong2017distributed}; next,  we group them based upon the asynchrony features (i) and (ii).
		
		\noindent (a) \textit{Random activation and no delays} \cite{iutzeler2013asynchronous,wei20131,bianchi2016coordinate,srivastava2011distributed,notarnicola2017asynchronous,xu2018convergence,notarnicola2016randomized,nedic2011asynchronous,shah2018linearly}: While substantially different in the form of the updates performed by the agents, these schemes are all asynchronous in the sense of feature (i) only. Agents (or edge-connected agents)  are   randomly activated but, when performing their computations/updates, they must use the current information from their neighbors. This means that no form of delay is allowed. Furthermore, between two activations, agents must be in idle mode (i.e., able to continuously receive information).   Some form of coordination is thus needed to enforce the above conditions. 
All the schemes in this group but \cite{notarnicola2016randomized} can deal   with convex objectives only; and none of the above works   provide a convergence rate or complexity analysis.
		
	\noindent 	(b) \textit{Synchronous activation and delays} \cite{wang2015cooperative,li2016distributed,tsianos2012distributed,tsianos2011distributed,lin2016distributed,doan2017impact}: These schemes consider synchronous activation/updates of the agents, which can tolerate fixed computation delays (e.g., outdated gradient information) \cite{wang2015cooperative,li2016distributed}  or fixed \cite{tsianos2012distributed,doan2017impact} or time-varying \cite{tsianos2011distributed,lin2016distributed} communication delays. However   delays  cannot be arbitrary, but must be such that  no  loss can ever occur in the network: every agent's message must reach its intended destination within a finite  time interval. Finally, all  these algorithms are applicable only to convex   problems.
		
	\noindent 	(c) \textit{Random/cyclic activations and some form of delay} \cite{zhao2015asynchronous,kumar2017asynchronous,wu2018decentralized,peng2016arock,bof2017newton,hong2017distributed,zhu2018block,chang2016asynchronous,ma2018fast,zhang2014asynchronous}: These schemes allow for random \cite{zhao2015asynchronous,kumar2017asynchronous,peng2016arock,wu2018decentralized,zhu2018block} or deterministic uncoordinated \cite{bof2017newton,hong2017distributed,chang2016asynchronous,ma2018fast,jiang2019asynchronous,zhang2014asynchronous} activation of the (edge-based) agents, together with the presence of some form of delay in the updates/computations. Specifically, \cite{zhao2015asynchronous,kumar2017asynchronous,bof2017newton} can handle link failures--the information sent by an agent to its neighbors either gets lost or received with {\it no delay}--but cannot deal with other forms of delay (e.g., communication delays). In \cite{peng2016arock,wu2018decentralized,zhu2018block} a probabilistic model is assumed whereby agents are randomly activated and update their local variables using possibly delayed information. The model requires that the random variables modeling the activation of the agents are i.i.d and independent of the delay vector used by the agent to perform its update. While this assumption makes the convergence analysis possible, in reality there is a strong dependence of the delays on the activation index, as also noted by the same  authors  \cite{peng2016arock,wu2018decentralized}; see  \cite{cannelli2019math}  for a detailed discussion on this issue and some  counter examples. Closer to our setting are the asynchronous methods in   \cite{hong2017distributed,wu2018decentralized,Tian-ASYSONATA18,chang2016asynchronous,ma2018fast,jiang2019asynchronous,zhang2014asynchronous}. These models  however assume that    each function $f_i$   depends on the {\it entire} vector $\mathbf{x}$. As a consequence, a consensus mechanism on all the optimization variables is employed among the agents  at each iteration. Because of that, a direct application of  these consensus-based algorithms   to the partitioned formulation \eqref{problem} would lead to very inefficient schemes calling for      unnecessary   computation and communication overheads.  {Furthermore, the ADMM-like schemes \cite{zhu2018block,chang2016asynchronous,ma2018fast,jiang2019asynchronous,zhang2014asynchronous,hong2017distributed} can be implemented only on  very specific network architectures, such as star networks or hierarchical topologies with multiple  master and worker nodes.} Finally, notice that, with the exception of \cite{kumar2017asynchronous,hong2017distributed,Tian-ASYSONATA18,zhu2018block,chang2016asynchronous,ma2018fast,jiang2019asynchronous,zhang2014asynchronous} (resp. \cite{bof2017newton}), all these schemes are applicable to convex problems (resp. undirected graphs) only, with \cite{zhao2015asynchronous} further assuming that all the functions $f_i$ have the same minimizer.

		The rest of the paper is organized as follows: Section \ref{prob_stat} discusses  some motivating applications. 
		The proposed algorithm is introduced and analyzed in Section \ref{sec:algorithm}. Finally, numerical results are presented in Section \ref{numerical_results}.\vspace{-0.3cm}
		\section{Motivating Examples}\label{prob_stat}
		We discuss  next two instances of Problem \eqref{problem}, which will be  also   used in our numerical experiments to test our algorithms (cf.  Sec. \ref{numerical_results}). The first case study is the matrix completion problem--an example of large-scale nonconvex  empirical risk minimization. We show how to exploit the sparsity pattern in the data to rewrite the problem in the form   \eqref{problem}, so that efficient asynchronous algorithms levering multi-core architectures can be developed. The second example deals with   learning problems from networked data sets; in this setting  data are distributed across multiple nodes, whose communication network is modeled as a (directed) graph.  \\		\textit{Example \#1 --Matrix completion:} 
The matrix completion problem consists of estimating a low-rank   matrix $\mathbf{Z}\in\mathbb{R}^{M\times N}$ from a  subset $\Omega\subseteq \{1,\ldots, M\}\times \{1,\ldots ,N\}$ of its entries. Postulating   the low-rank factorization $\mathbf{Z}=\mathbf{X}^T\mathbf{Y}$, with
 $\mathbf{X}\in\mathbb{R}^{r\times M}$ and $\mathbf{Y}\in\mathbb{R}^{r\times N}$, the optimization problem reads \cite{srebro2005maximum}:\vspace{-0.1cm}
		\begin{equation}
				\underset{\overset{ \scriptstyle  \mathbf{X}\in\mathbb{R}^{r\times M}}{\scriptstyle \mathbf{Y}\in\mathbb{R}^{r\times N}}}{\min}
\;V(\mathbf{X},\mathbf{Y})\triangleq  \frac{1}{2}\left\|(\mathbf{X}^T\mathbf{Y}-\mathbf{Z})_{\Omega}\right\|_F^2
+\frac{\lambda}{2}\left\|\mathbf{X}\right\|_F^2+\frac{\xi}{2}\left\|\mathbf{Y}\right\|_F^2,
			\label{mc}\vspace{-0.1cm}
		\end{equation}
		where $\|\cdot\|_F$ is the Frobenius norm; $(\cdot)_{\Omega}$ is the projection operator, defined as  $[(\mathbf{X})_{\Omega}]_{(i,j)}=\mathbf{X}_{(i,j)}$, if $(i,j)\in\Omega$; and   $[(\mathbf{X})_{\Omega}]_{(i,j)}=0$ otherwise; and $\lambda,\xi>0$ are regularization parameters. In many applications,  the amount of data is so large that storage and processing from a single agent (e.g., core, machine) is not efficient or even feasible.   The proposed approach is then to leverage    multi-core machines by first casting \eqref{mc} in the form \eqref{problem}, and then employing  the parallel asynchronous framework developed in this paper.

		 Consider a distributed environment composed of $N$ agents, and assume that  the known entries $z_{mn}$, $(m,n)\in\Omega$, are partitioned among the agents. This partition along with the sparsity pattern of $(\mathbf{Z})_\Omega$   induce naturally the following   splitting of the optimization variables $
		 \mathbf{X}$ and $\mathbf{Y}$ across the agents.  Let $\mathbf{x}_m$ and $\mathbf{y}_n$ denote the $m$-th and  the $n$-th column of $
		 \mathbf{X}$ and $\mathbf{Y}$, respectively;  the agent owning
		 $z_{mn}$ will control/update the variables  $\mathbf{x}_m$ (or $\mathbf{y}_n$), and it is connected to the agent that
		 optimizes the column $\mathbf{y}_n$ (or $\mathbf{x}_m$). By doing so,   we  minimize the overlapping across the block-variables  and, consequently, the communications among the agents. Problem \eqref{mc}  can be then rewritten in the  multi-agent form \eqref{problem}, setting \vspace{-0.1cm}
		 \begin{equation}f_i((\mathbf{X},\mathbf{Y})_{\mathcal{N}_i})=\frac{1}{2}\,\sum\limits_{(m,n)\in\Omega_i}(\mathbf{x}_m^T\,\mathbf{y}_n-z_{mn})^2\vspace{-0.3cm}\end{equation} and \vspace{-0.1cm}\begin{equation}g_i\left(\{\mathbf{x}_m\}_{m\in X_i},\{\mathbf{y}_n\}_{n\in Y_i}\right) = \frac	{\lambda}{2}\sum\limits_{m\in X_i}\|\mathbf{x}_m\|^2_2+\frac{\xi}{2}\sum\limits_{n\in Y_i}\|\mathbf{y}_n\|
			_2^2,\end{equation} where $\Omega_i\subseteq \Omega$ contains the indices associated to the components of    $(\mathbf{Z})_\Omega$ owned by agent $i$, and $X_i$ (resp. $Y_i$) is the set of the column indexes of $
		\mathbf{X}$ (resp.  $\mathbf{Y}$)  controlled by agent $i$.

		\noindent
		\textit{Example \#2 -- Empirical risk minimization over networks: } Consider now a network  setting where data are distributed across  $N$ geographically separated nodes. As concrete example,  let us pick   the renowned LASSO problem \cite{tibshirani1996regression}:
		\begin{equation}
			\begin{array}{rl}
				\underset{\mathbf{x}=(\mathbf{x}_1^T,\ldots,\mathbf{x}^T_N)^T\in \mathbb{R}^n}{\min} & \|\mathbf{Ax}-\mathbf{b}\|_2^2+\lambda\|\mathbf{x}\|_1,
			\end{array}
			\label{lasso}
		\end{equation}
		where $\mathbf{A}\in\mathbb{R}^{m\times n}, \mathbf{b}\in\mathbb{R}^m$, and $\lambda>0$ is a regularization parameter. 
		Note that \eqref{lasso} easily falls into Problem \eqref{problem}; for each   $i\in\mathcal{N}$, it is sufficient to set $f_i(\mathbf{x})=\|\mathbf{A}_i\mathbf{x}+\mathbf{b}_i\|_2^2$, with $\mathbf{A}_i\in\mathbb{R}^{m\times n}$ and $\mathbf{b}_i\in\mathbb{R}^m$ such that $\mathbf{A}=\sum_{i=1}^N\mathbf{A}_i$ and $\mathbf{b}=\sum_{i=1}^N\mathbf{b}_i$; and $g_i=\|\mathbf{x}_i\|_1$.  $\mathbf{A}_i$ and $\mathbf{b}_i$ represent in fact the  data stored at agent $i$'s side. Under specific sparsity patterns in the data, 
		the local matrices $\mathbf{A}_i$ may be (or constructed to be)   such that each local function $f_i$  depends only on some of the block variables $\mathbf{x}_i$. These dependencies will   define the sets $\mathcal{N}_i$ associated to  each agent $i$.  Note that $\mathcal{N}_i$ need not coincide with  the   neighbors  of agent $i$ in the communication network (graph).  That is, the graph modeling the dependence across the block-variables--the one with  node set  $\mathcal{N}$ and edge set  $\mathcal{E}=\{(i,j): j\in \mathcal{N}_i,\text{for some}\,i\!\in\!\mathcal{N}\}$--might not coincide with the communication graph. This can be desirable, e.g., when the communication graph is populated by inefficient communication links, which one  wants to avoid using.\vspace{-0.1cm}

		\section{Distributed Asynchronous Algorithm}		
		\label{sec:algorithm}
		In the proposed asynchronous model,  agents  update their block-variables    without any coordination. Let $k$ be the iteration counter: the iteration  $k\rightarrow k+1$ is triggered  when   {one} agent, say $i$,  updates its own block   $\mathbf{x}_i$ from $\mathbf{x}_i^k$ to $\mathbf{x}_i^{k+1}$. Hence,   $\mathbf{x}^k$ and $\mathbf{x}^{k+1}$ only differ in the $i$-th block $\mathbf{x}_i$.
To perform its update, agent $i$ minimizes a
strongly convex approximation of  $\sum_{j\in\mathcal{N}_i}f_j-$the part of $V$ that depends on $
\mathbf{x}_i-$using possibly outdated information collected from the other agents $j\in \mathcal{N}_i$.
To represent this situation,
		let  $\mathbf{x}_j^{k-d_j^k(i,i)}$, $j\in\mathcal{N}_{i}\backslash\{i\}$,
		denote the estimate held by agent $i$ of  agent $j$'s variable $\mathbf{x}_j^k$, where  $d^k_j(i,i)$ is a
		nonnegative (integer) delay (the reason for the double index $(i,i)$ in $d^k_j$ will become clear shortly). If   $d^k_j(i,i)=0$,
		agent
		$i$ owns the most recent information on the variable of agent $j$,
		otherwise  $\mathbf{x}_j^{k-d^k_j(i,i)}$ is some delayed version of  $\mathbf{x}_j^{k}$. We define as $\mathbf{d}^k(i,i)\triangleq [d_l^k(i,i)]_{l\in\mathcal{N}_i}$ the \textit{delay vector} collecting these delays; for ease of notation $\mathbf{d}^k(i,i)$ contains also the value $d^k_i(i,i)$, set to zero, as 
		each agent has always access to current values of its own  variables.		Using the above notation and  {recalling that $f_i$ depends on $\mathbf{x}_{
				\mathcal{N}_i} $,} agent $i$ at iteration $k$ solves   the
		following strongly convex  subproblem:\vspace{-0.1cm}
		\begin{align}
			\label{best_resp} & {\widehat{\mathbf{x}}_i^k\!\triangleq\underset{\mathbf{x}_i\in\mathcal{X}_i}{\arg\min}\,
\Bigg\{\tilde{f}_i\left(\mathbf{x}_i;\mathbf{x}_{
				\mathcal{N}_i}^{k-\mathbf{d}^k(i,i)}\right)+}
			\\\nonumber&\sum\limits_{j\in\mathcal{N}_i\backslash\{i\}}\!\!\!\!\left\langle \nabla_{\mathbf{x}_i}f_j\left(\mathbf{x}_{\mathcal{N}_j}^{k-
				\mathbf{d}^k(i,j)}\right),\mathbf{x}_{i}\!-\!\mathbf{x}_i^k\right\rangle + g_{i}(\mathbf{x}_i)\Bigg\},
		\end{align}
 where we defined    $\mathbf{x}_{\mathcal{N}_j}^{k-
			\mathbf{d}^k(i,j)}\triangleq[\mathbf{x}_l^{k-d^k_l(i,j)}]_{l\in\mathcal{N}_j}$,	  $j\in\mathcal{N}_i$.	

The term $\tilde{f}_i$ in \eqref{best_resp} is a  strongly convex surrogate that replaces the nonconvex
		function $f_i$ known by agent $i$; an outdated value of the variables of the other agents is used, $\mathbf{x}_{
				\mathcal{N}_i}^{k-\mathbf{d}^k(i,i)}$, to build this function.  Examples of valid surrogates are discussed in Sec. \ref{sec:assumptions}.
The second term in \eqref{best_resp}  approximates
			$\sum_{j\in			\mathcal{N}_i\backslash\{i\}} f_j$
by replacing  each  $f_j$  by its first order approximation at (possibly outdated) $\mathbf{x}_{\mathcal{N}_j}^{k-
				\mathbf{d}^k(i,j)}$ (with
			$\nabla_{\mathbf{x}_i}f_j$ denoting the gradient of $f_j$ with respect to the block $\mathbf{x}_i$), where $\mathbf{d}^k(i,j)\triangleq\left[d^k_l(i,j)\right]_{l\in\mathcal{N}_j}$, with $d_l^k(i,j)\geq0$ representing the delay of the information that $i$ knows about the gradient $\nabla_{\mathbf{x}_i}f_j$. This source of delay  {on the gradients} is due to two facts, namely:   i) agents $j \in\mathcal{N}_i\setminus \{i\}$ may communicate to $i$ its gradient $\nabla_{\mathbf{x}_i}f_j$ occasionally; and ii)  $\nabla_{\mathbf{x}_i}f_j$ is  generally  computed at some outdated point, as agent $j$ itself  may not  have access of the last information of the variables of the agents in $\mathcal{N}_j\setminus \{j\}$.

							
Once 		${\widehat{\mathbf{x}}_i^k}$ has been computed, agent $i$   sets
		\begin{equation} \mathbf{x}_i^{k+1}=\mathbf{x}^k_i+\gamma\left({\widehat{\mathbf{x}}_i^k}-\mathbf{x}_i^k\right),
			\label{update}
		\end{equation}
		where $\gamma\in(0;1]$ is suitably chosen   stepsize  (cf. Sec. \ref{sec:assumptions}).		
		
		The proposed distributed asynchronous algorithm, termed Distributed Asynchronous FLexible ParallEl Algorithm (DAsyFLEXA), is formally described in  Algorithm~\ref{algorithm1}. We set $\mathbf{x}^t_i=\mathbf{x}^0_i$, for all $t<0$ and $i\in\mathcal{N}$, without loss of generality.
		\begin{algorithm}
			\caption{\textbf{Distributed Asynchronous FLexible ParallEl Algorithm (DAsyFLEXA)}}
			\label{algorithm1}
			\begin{algorithmic}
				\STATE{\textbf{Initialization:} \hspace{-.1cm}$k\hspace{-.1cm}=\hspace{-.1cm}0$; $\mathbf{x}^0\hspace{-.1cm}\in\hspace{-.1cm}\mathcal{X}\hspace{-.1cm}\triangleq\hspace{-.1cm}\prod_i\mathcal{X}_i$;  $\mathbf{x}^t\hspace{-.1cm}=\hspace{-.1cm}\mathbf{x}^0$, $t\hspace{-.1cm}<\hspace{-.1cm}0$; $\hspace{-.1cm}\gamma\in(0;1]$.}\\
				\WHILE{ a termination criterion is not met}
				\STATE{\texttt{(S.1)}: Pick agent $i^k$ and delays $\{\mathbf{d}^k(i^k,j)\}_{j\in\mathcal{N}_{i^k}}$;}
				\STATE{\texttt{(S.2)}: Compute $\widehat{\mathbf{x}}_{i^k}^k$ according to \eqref{best_resp};}
				\STATE{\texttt{(S.3)}: Update $\mathbf{x}_{i^k}^k$ according to \eqref{update};}
				\STATE{\texttt{(S.4)}: Update the global iteration counter $k \leftarrow k+1$;}
				\ENDWHILE
			\end{algorithmic}
		\end{algorithm}

	 We stress that agents need  know neither   the iteration counter $k$ nor the vector of delays.
		No one  ``picks agent $i^k$ and the delays $\{\mathbf{d}^k(i^k,j)\}_{j\in\mathcal{N}_{i^k}}$'' in   \texttt{(S.1)}. This is just an {\em a posteriori} view of the algorithm dynamics: all agents asynchronously and continuously collect information from their neighbors and use it to update $\mathbf{x}_i$; when one agent has completed an update the iteration index $k$ is increased and $i^k$ is defined.

 		
		 	 \vspace{-0.4cm}
		
		\subsection{Assumptions}\label{sec:assumptions}
		Before studying convergence of  Algorithm \ref{algorithm1}, we state the main assumptions on Problem \eqref{problem} and  the algorithmic choices.

		\noindent\textbf{On Problem \eqref{problem}.}
		Below,  we will use the following  conventions: When  a function is said to be  differentiable on a certain domain, it is understood  that the function is differentiable on an open set containing the domain. 
		We say that  $f_i$ is block-$LC^1$ on a set if it is continuously differentiable on that set and  $\nabla_{\mathbf{x}_j}f_i$  are locally Lipschitz.  We say $V$ is {\em coercive} on $\mathcal{X} =\prod_{i}\mathcal{X}_i$, if $\lim\limits_{\substack{\|\mathbf{x}\|\to +\infty, \mathbf{x} \in{\cal X}}} V(\mathbf{x}) = +\infty$; this is equivalent to requiring that all level sets of $V$ in $\cal X$ are compact.

		\noindent \textbf{Assumption A (On Problem \eqref{problem}):}
		\begin{description}
			\item[(A1)]  Each set $\mathcal{X}_i\subseteq\mathbb{R}^{n_i}$ is nonempty, closed, and convex;
			\item[(A2)]  At least one of the following conditions is satisfied
			
			(a) ${\cal L}^0 \triangleq \{\mathbf{x} \in {\cal X}: V(\mathbf{x})\leq V(\mathbf{x}^0)\}$ is compact and all  $f_i$ are block-$LC^1$ on ${\mathcal{X}_{\mathcal{N}_i}\triangleq\underset{j\in\mathcal{N}_i}{\Pi}\mathcal{X}_j}$;
			
			(b) 	 All $f_i$ are $C^1$ and their   gradients    $\nabla_{\mathbf{x}_j}f_i$, $j \in {\cal N}_i$, are {\em globally} Lipschitz on
			 {$\mathcal{X}_{\mathcal{N}_i}$};

			
			\item[(A3)] Each $g_i:\mathcal{X}_i\rightarrow\mathbb{R}$ is convex;
			\item[(A4)] Problem \eqref{problem} has a solution;
			\item[(A5)] The communication graph $\mathcal{G}$ is   connected.
		\end{description}
		
		\indent The above assumptions are standard and satisfied by many practical problems. For instance,  A2(a) holds if $V$ is coercive on $\cal X$ and  all  $f_i$ are block-$LC^1$ on  {$\mathcal{X}_{\mathcal{N}_i}$}. Note that  Example \#2 satisfies A2(b); A2(a) is motivated by applications such as Example \#1, which do not satisfy A2(b).   A3 is a  common assumption in the literature of parallel and distributed methods for the class of problems \eqref{problem}; two renowned examples are $g_i(\mathbf{x}_i)=\|\mathbf{x}_i\|_1$ and $g_i(\mathbf{x}_i)=\|\mathbf{x}_i\|_2$. Finally,  A4 is satisfied if, for example, $V$ is coercive or if $\cal X$ is bounded.  \vspace{-0.2cm}
	\begin{remark}   Extensions to the case of directed graphs or the case where each agent updates multiple block-variables are easy, but not discussed here for the sake of simplicity.\vspace{-0.1cm}\end{remark}
		

			The aim of Algorithm \ref{algorithm1} is to find {\em stationary solutions} of   \eqref{problem}, i.e.  points  {$\mathbf{x}^\star\in
		\mathcal{X}$  such that}\vspace{-0.1cm}
		\begin{equation*}
		\left\langle \nabla F(\mathbf{x}^\star) + \boldsymbol{\xi},\mathbf{y} - \mathbf{x}^\star\right\rangle+G(\mathbf{y})-G(\mathbf{x}^\star)\geq 0, \qquad \forall \mathbf{y} \in \mathcal{X}.
		\end{equation*}
		Let
		$\mathcal{X}^\star\subseteq \mathbb{R}^n$ denote the set of  such stationary solutions.
		
		
		\noindent\textbf{On an error bound condition.}
		We prove     linear convergence  of Algorithm~\ref{algorithm1} under the Luo-Tseng  error bound condition, which is stated next.  Recall the definition: 
		given $\alpha>0$, 
		\begin{equation*}
			\texttt{prox}_{\alpha G}(\mathbf{z})\triangleq\underset{\mathbf{y}\in\mathcal{X}}{\arg\min}\,\left\{\alpha G(\mathbf{y})+\frac{1}{2}\|\mathbf{y}-\mathbf{z}\|_2^2\right\}.
		\end{equation*}
			Furthermore, given $\mathbf{x}\in \mathbb{R}^n$, let  		\begin{equation*}
			d(\mathbf{x}, \mathcal X^\star)\triangleq \underset{\mathbf{x}^\star\in\mathcal{X}^\star}{\min} \|\mathbf{x}-\mathbf{x}^\star\|_2,\qquad P_{\mathcal{X}^\star}(\mathbf{x})\triangleq \underset{\mathbf{x}^\star\in\mathcal{X}^\star}{\text{argmin}} \|\mathbf{x}-\mathbf{x}^\star\|_2.
		\end{equation*}
		Note that $P_{\mathcal{X}^\star}(\mathbf{x})\neq \emptyset$, as $\mathcal X^\star$ is closed.
		
		\noindent\textbf{Assumption B (Luo-Tseng error bound):}\begin{description}
			\item[(B1)] For any $\eta>\underset{\mathbf{x}\in\mathcal{X}}{\min}\,V(\mathbf{x})$, there exist   $\epsilon,\kappa>0$ such that: $$\hspace{-1cm}\left.\begin{array}{l}
			V(\mathbf{x})\leq\eta,\\
			\|\mathbf{x}-\texttt{prox}_{G}\left(\nabla F(\mathbf{x})-\mathbf{x}\right)\|_{2}\leq\epsilon
			\end{array}\right\} \Rightarrow$$$$d(\mathbf{x},\mathcal{X}^{\star})\leq\kappa\left\Vert \mathbf{x}-\texttt{prox}_{G}\left(\nabla F(\mathbf{x})-\mathbf{x}\right)\right\Vert_{2};$$
			\item[(B2)]   There exists  $\delta>0$ such that $$\left.\begin{array}{l}
			\mathbf{x},\mathbf{y}\in\mathcal{X}^{*},\\
			V(\mathbf{x})\neq V(\mathbf{y})
			\end{array}\right\}  \Rightarrow \|\mathbf{x}-\mathbf{y}\|_{2}\geq\delta.$$
			
		\end{description}
		B1 is a local Lipschitzian error bound:  the distance of $\bx$ from  $\mathcal{X}^\star$ is of
		the same order of the norm of the residual  $\mathbf{x}-\texttt{prox}_{G}\left(\nabla F(\mathbf{x})-\mathbf{x}
		\right)$ at $\bx$. It is not difficult  to check, that  $\bx\in \mathcal{X}^\star$ if and only if
		$\mathbf{x}-\texttt{prox}_{G}\left(\nabla F(\mathbf{x})-\mathbf{x}\right)=0$.
		Error bounds of this kind have been extensively studied in the literature; see \cite{luo1993error,tseng2009coordinate} and references
		therein. Examples of problems satisfying Assumption B include:    LASSO, Group LASSO, Logistic Regression, unconstrained optimization with smooth nonconvex quadratic objective or    $F(\mathbf{A}\mathbf{x})$, with $F$ being strongly convex and $\mathbf{A}$ being arbitrary.  
		 B2 states that the level curves of $V$ restricted to   $\mathcal{X}^\star$ are
		``properly separated''.   B2 is trivially satisfied, e.g.,  if $V$ is convex, if $\mathcal{X}$ is bounded, or if   \eqref{problem} has a finite number of stationary solutions.
		
		
		\smallskip
		
		\noindent\textbf{On the subproblems \eqref{best_resp}.}
		The surrogate  functions $\tilde f_i$     satisfy  the following fairly standard conditions  ($\nabla \tilde{f}_i$   denotes the partial gradient of $\tilde{f}_i$ w.r.t. the first argument).
		
		\noindent \textbf{Assumption C} Each $\tilde f_i:\mathcal{X}_i\times  \mathcal{X}_{\mathcal{N}_i}\to \mathbb{R}$ is chosen so that\vspace{-0.cm}
		\begin{description}
			\item[ (C1)] $\tilde f_{i} (\mathbf{\cdot}; \mathbf{y})$ is $C^1$ and $\tau$-strongly convex on $ \mathcal{X}_i$, for all $\mathbf{y}\in \mathcal{X}_{\mathcal{N}_i}$;\smallskip
			\item[  (C2)]   $\nabla \tilde f_{i} (\mathbf{y}_i;\mathbf{y}_{\mathcal{N}_i}) = \nabla_{\mathbf{y}_i}f_i(\mathbf{y}_{\mathcal{N}_i})$, for all $\mathbf{y}\in  \mathcal{X}$; \smallskip
			\item[  (C3)]  $\nabla \tilde f_{i} (\mathbf{y};\cdot)$	is $L_i$-Lipschitz continuous	on $ \mathcal{X}_{\mathcal{N}_i}$,	for all $\mathbf{y}\in  \mathcal{X}_i$.
		\end{description}
		A wide array of   surrogate functions $\tilde f_{i}$ satisfying Assumption C can be found in  \cite{FLEXA}; three examples are discussed next.
		
		\noindent $\bullet$ It is always possible to choose $\tilde f_{i}$  as the first-order approximation of $f_i$:   $\tilde f_{i} (\mathbf{x}_i; \mathbf{y}_{\mathcal{N}_i})
		= \left\langle\nabla_{{\mathbf{x}}_i}   f(\mathbf{y}_{\mathcal{N}_i}),\bx_i - \mathbf{y}_i\right\rangle + c\| \bx_i - \mathbf{y}_i \|^2_2$, where $c$ is a positive constant.
		
		\noindent $\bullet$ If $f_i$ is block-wise uniformly convex, instead of linearizing $f_i$ one can exploit a second-order approximation and set  $\tilde{f}_i(\mathbf{x}_i;\mathbf{y}_{\mathcal{N}_i})=f_i(\mathbf{y}_{\mathcal{N}_i})+\left\langle\nabla_{\mathbf{x}_i}   f_i(\mathbf{y}_{\mathcal{N}_i}),\bx_i - \mathbf{y}_i\right\rangle+\frac{1}{2}(\mathbf{x}_i-\mathbf{y}_i)^T\nabla^2_{\mathbf{x}_i\mathbf{x}_i}f_i(\mathbf{y}_{\mathcal{N}_i})(\mathbf{x}_i-\mathbf{y}_i)+c\|\mathbf{x}_i-\mathbf{y}_i\|^2_2$, for any $\mathbf{y}\in\mathcal{X}$, where $c$ is a positive constant.
		
		\noindent$\bullet$ In the same setting as above, one can also better preserve the partial convexity of $f_i$ and choose  $\tilde{f}_i(\mathbf{x}_i;\mathbf{y}_{\mathcal{N}_i})=f_i(\mathbf{x}_i,\mathbf{y}_{\mathcal{N}_i\backslash\{i\}})+c\|\mathbf{x}_i-\mathbf{y}_i\|^2_2$, for any $\mathbf{y}\in\mathcal{X}$. 
		\smallskip

\noindent\textbf{On the asynchronous/communication model.}
The way  agent $i$  builds its own
		estimates $\mathbf{x}_{\mathcal{N}_i}^{k-\mathbf{d}^k(i,i)}$ and $\nabla_{\mathbf{x}_i}f_j\left(\mathbf{x}_{\mathcal{N}_j}^{k-\mathbf{d}^k(i,j)}\right)$,
		$j\in\mathcal{N}_i\backslash\{i\}$,  depends on the particular asynchronous model and communication
		protocol under consideration and it is immaterial to the convergence of Algorithm~\ref{algorithm1}. This is  a major departure from   previous works, such as  \cite{notarnicola2016randomized,iutzeler2013asynchronous,bianchi2016coordinate},  which instead enforce specific asynchrony and communication protocols. We only require the following mild conditions.   

		\noindent \textbf{Assumption D (On the asynchronous model):}
		\begin{description}
			\item[(D1)]  Every block  variable of $\mathbf{x}$ is  updated at most every $B\geq N$ iterations, i.e., $
			\cup_{t=k}^{k+B-1} \,i^t=\mathcal{N}$, for all $k$;
			\item[(D2)]  {$\exists\,D\in[0,B]$, such that every component of $\mathbf{d}^k(i,j)$, $i\in\mathcal{N}$, $j\in\mathcal{N}_i$, is not greater than $D$, for any $k\geq0$.}\footnote{While $\texttt{(S.2)}$ in Algorithm 1 is defined once $\mathbf{d}^k(i^k,j)$, $j\in \mathcal{N}_{i^k}$ is given, here we extend the definition of  the delay vectors   $\mathbf{d}^k(i,j)$ to all $i,j\in \mathcal{N}$, whose values are set to the delays of the information known by the associated   agent  on the variables  {and   gradients} of the others, at the time agent $i^k$ performs its update. This will simplify the notation in some of the technical derivations. }
		\end{description}
		Assumption D is satisfied   virtually in all practical  scenarios.  D1 controls the frequency of the
		updates and  is satisfied, for example, by any \textit{essentially cyclic rules}. In practice, it   is automatically satisfied, e.g., if each agent wakes up and performs an
		update whenever some internal clock ticks, without any   centralized coordination.
		D2 imposes a mild condition on the  communication protocol employed by the agents:  information used in the agents' updates can not become infinitely old. While this implies agents communicate sufficiently often, it does not enforce  any specific protocol on the activation/idle time/communication. For instance,  differently from several asynchronous schemes in the literature
\cite{notarnicola2016randomized,iutzeler2013asynchronous,wei20131,bianchi2016coordinate,srivastava2011distributed,nedic2011asynchronous,zhao2015asynchronous},    agents need not be  always  in ``idle mode''   to continuously    receive messages from their
 neighbors.
		Notice   that time varying delays satisfying D2 model also   packet losses. 		
		 \vspace{-0.3cm}

		\subsection{Convergence Analysis}
		\label{convergence}
		We are now in the position to state the main convergence results for DAsyFLEXA.
	 {For nonconvex instances of  \eqref{problem},   an appropriate measure of optimality is needed to evaluate the progress of the algorithm towards stationarity. In order to define such a   measure, we first introduce the following quantities: for any $k\geq0$  {and $i\in\mathcal{N}$},\vspace{-0.2cm}
		\begin{align}
\label{def bar x} & {\widehat{\bar{\mathbf{x}}}_i^k\!\triangleq\underset{\mathbf{x}_i\in\mathcal{X}_i}{\arg\min}\,
	\Bigg\{\tilde{f}_i\left(\mathbf{x}_i;\mathbf{x}_{
		\mathcal{N}_i}^{k}\right)+}
\\\nonumber&\sum\limits_{j\in\mathcal{N}_i\backslash\{i\}}\!\!\!\!\left\langle \nabla_{\mathbf{x}_i}f_j\left(\mathbf{x}_{\mathcal{N}_j}^{k}\right),\mathbf{x}_{i}\!-\!\mathbf{x}_i^k\right\rangle + g_{i}(\mathbf{x}_i)\Bigg\},
\end{align}
 {where $\widehat{\bar{\mathbf{x}}}_i^k$ is a ``synchronous'' instance of $\widehat{\mathbf{x}}^k_i$ [cf. \eqref{best_resp}] wherein all  $\mathbf{d}^k(i,j)=\mathbf{0}$.}
		Convergence to stationarity  is monitored by the following merit function:
		\begin{equation}
			 {M_V(\mathbf{x}^k)\triangleq\|\widehat{\bar{\mathbf{x}}}^k-\mathbf{x}^k\|_2^2},\quad\text{with}\quad \widehat{\bar{\mathbf{x}}}^k\triangleq\left[\widehat{\bar{\mathbf{x}}}_i^k\right]_{i\in\mathcal{N}}.
		\end{equation}
	Note that $M_V$ is a valid measure of stationarity, as $M_V$ is continuous and $M_V(\mathbf{x}^k)=0$ if and only if $\mathbf{x}^k\in \mathcal{X}^\star$.

		The following theorem shows that, when   agents use a sufficiently small stepsize, the sequence of the iterates produced by DAsyFLEXA converges to  a stationary solution of \eqref{problem}, driving
$M_V(\mathbf{x}^k)$	 to zero at a sublinear  rate.
In the theorem we use two positive constants, $L$ and $C_1$, whose definition is given in  Appendix \ref{technical} and \ref{step3} [cf. \eqref{C1}], respectively. Suffices to say, here, that $L$ is essentially a Lipschitz constant for the partial gradients $\nabla_{\mathbf{x}_i}f_i$  whose definition varies according to whether A2(a) or A2(b) holds. In the latter case,   $L$ is simply the largest global Lipschitz constant for all $\nabla_{\mathbf{x}_j} f_i$'s. In the former case,
	the sequences $\{\mathbf{x}^k\}$ 
	 and  {$\{\widehat{\mathbf{x}}^k\}$,  {with} $\widehat{\mathbf{x}}^k\triangleq\left[\widehat{\mathbf{x}}_i^k\right]_{i\in\mathcal{N}}$,} are proved to be  bounded [cf. Theorem \ref{compl}(c)];  $L$ is then the Lipschitz constant of  all $\nabla_{\mathbf{x}_j} f_i$'s over the  compact set confining these sequences.
		\begin{theorem}\label{compl}			
			Given Problem \eqref{problem} under Assumption A; let $\{\mathbf{x}^k\}$  be the
			sequence generated by DAsyFLEXA, under Assumptions C, and D. Choose $\gamma\in(0,1]$ such  that  $\gamma<\frac{2\tau}{L\left(2+\rho^2D^2\right)}$, with $\rho\triangleq
			{\max_{i\in\mathcal{N}}}\,|\mathcal{N}_i|$.
			Then, there hold:
			\begin{itemize}
				\item[(a)] Any limit point of $\{\mathbf{x}^k\}$ is a stationary solution of  \eqref{problem};
				\item[(b)] In at most $T_\epsilon$ iterations, DAsyFLEXA drives the stationarity measure $M_V(\mathbf{x}^k)$ below $\epsilon$, $\epsilon>0$, where
				\begin{align*}
					T_\epsilon= \ceil*{ {C_1\left(V(\mathbf{x}^0)-\underset{\mathbf{x}\in\mathcal{X}}{\min}\,V(\mathbf{x})\right)}\cdot\frac{1}{\epsilon}},
				\end{align*}	
				where $C_1>0$ is a constant defined in Appendix \ref{step3} [cf. \eqref{C1}], which depends on $\rho, L_i, i\in\mathcal{N}, L, \tau, \gamma, N, B$, and $D$.		
				%
				
				\item[(c)] If, in particular A2(a) is satisfied, $\{\mathbf{x}^k\}$ is bounded.
			\end{itemize}
			
				\end{theorem}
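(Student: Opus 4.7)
The plan is to establish a ``noisy'' descent inequality on $V(\mathbf{x}^k)$ and then combine it with the $B,D$-bounded delay/update windows from Assumption D to get (a), (b), (c) in a unified way. First, I would fix an iteration $k$ with $i=i^k$ the single block updated, and use the $\tau$-strong convexity (C1) and first-order optimality of $\widehat{\mathbf{x}}_i^k$ in \eqref{best_resp} to obtain
\begin{equation*}
\langle \nabla_{\mathbf{x}_i} f_i(\mathbf{x}_{\mathcal{N}_i}^{k-\mathbf{d}^k(i,i)}) + \!\!\sum_{j\in \mathcal{N}_i\setminus\{i\}}\!\! \nabla_{\mathbf{x}_i} f_j(\mathbf{x}_{\mathcal{N}_j}^{k-\mathbf{d}^k(i,j)}),\widehat{\mathbf{x}}_i^k-\mathbf{x}_i^k\rangle + g_i(\widehat{\mathbf{x}}_i^k)-g_i(\mathbf{x}_i^k)\leq -\tau\|\widehat{\mathbf{x}}_i^k-\mathbf{x}_i^k\|^2.
\end{equation*}
Using C2 (gradient consistency) to replace the surrogate inner product by the true one, adding a descent-lemma bound on $F(\mathbf{x}^{k+1})-F(\mathbf{x}^k)$ via the block-Lipschitz constant $L$ of $\nabla F$, and exploiting convexity of $g_i$ together with $\mathbf{x}_i^{k+1}=\mathbf{x}_i^k+\gamma(\widehat{\mathbf{x}}_i^k-\mathbf{x}_i^k)$, I would arrive at
\begin{equation*}
V(\mathbf{x}^{k+1})-V(\mathbf{x}^k)\leq -\gamma\left(\tau-\tfrac{\gamma L}{2}\right)\|\widehat{\mathbf{x}}_{i^k}^k-\mathbf{x}_{i^k}^k\|^2 + \gamma\,E_k,
\end{equation*}
where $E_k$ is an error term collecting the mismatch between $\nabla_{\mathbf{x}_i}f_j$ evaluated at the delayed argument versus at $\mathbf{x}^k$; by A2 (or boundedness proved as in (c)) and local Lipschitzness of the block-gradients, $E_k$ is bounded by a multiple of $\rho\,L \sum_{j\in \mathcal{N}_i}\|\mathbf{x}^{k-\mathbf{d}^k(i,j)}-\mathbf{x}^k\|^2$, which in turn—using only one block changes per iteration—telescopes as $\leq D\sum_{t=k-D}^{k-1}\gamma^2\|\widehat{\mathbf{x}}_{i^t}^t-\mathbf{x}_{i^t}^t\|^2$ with multiplicity at most $\rho D$.

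Next I would sum the inequality from $k=0$ to $k=T$ and use the telescoping of the $E_k$ terms; the condition $\gamma<2\tau/(L(2+\rho^2 D^2))$ is precisely what makes the net coefficient of $\sum_k\|\widehat{\mathbf{x}}_{i^k}^k-\mathbf{x}_{i^k}^k\|^2$ strictly negative, yielding
\begin{equation*}
\sum_{k=0}^{T}\|\widehat{\mathbf{x}}_{i^k}^k-\mathbf{x}_{i^k}^k\|^2 \leq C_0\bigl(V(\mathbf{x}^0)-\min_{\mathbf{x}\in\mathcal{X}} V(\mathbf{x})\bigr),
\end{equation*}
for a suitable $C_0>0$. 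In particular $V(\mathbf{x}^k)\le V(\mathbf{x}^0)$ for all $k$, giving (c) at once via compactness of $\mathcal{L}^0$ under A2(a); the iterates $\widehat{\mathbf{x}}^k$ are then bounded because each subproblem is $\tau$-strongly convex with bounded data, which in turn justifies the use of a uniform $L$ in the derivation above.

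To pass from $\sum_k\|\widehat{\mathbf{x}}_{i^k}^k-\mathbf{x}_{i^k}^k\|^2$ to $\sum_k M_V(\mathbf{x}^k)$ I would use Assumption D1: every block is refreshed in any window of $B$ iterations, so for any $k$ one can bound $\|\widehat{\bar{\mathbf{x}}}_i^k-\mathbf{x}_i^k\|^2$ by $\|\widehat{\mathbf{x}}_i^{k(i)}-\mathbf{x}_i^{k(i)}\|^2$ (at the last update time $k(i)\in[k-B,k]$) plus Lipschitz error terms of the form $\|\mathbf{x}^{k(i)-\mathbf{d}^{k(i)}(i,j)}-\mathbf{x}^k\|^2$; these errors again telescope within a window of length $B+D$ into squared successive differences and hence into $\|\widehat{\mathbf{x}}_{i^t}^t-\mathbf{x}_{i^t}^t\|^2$ for $t$ nearby. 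This yields $\sum_{k=0}^{T}M_V(\mathbf{x}^k)\leq C_1\bigl(V(\mathbf{x}^0)-\min V\bigr)$, from which $\min_{k\leq T}M_V(\mathbf{x}^k)\leq \epsilon$ after $T_\epsilon$ iterations as stated in (b), and $M_V(\mathbf{x}^k)\to 0$. Part (a) then follows by standard continuity arguments: along any convergent subsequence $\mathbf{x}^{k_n}\to \mathbf{x}^\infty$ one has $\mathbf{x}^{k_n+t}\to \mathbf{x}^\infty$ for $t=-B-D,\dots,B$, so all delayed arguments converge to $\mathbf{x}^\infty$, and passing to the limit in the first-order optimality conditions of \eqref{def bar x} (using continuity of $\nabla \tilde{f}_i$, $\nabla_{\mathbf{x}_i}f_j$, and C2) gives the variational inequality defining $\mathcal{X}^\star$.

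The main obstacle is the bookkeeping in the second step: the delay indices $\mathbf{d}^k(i,j)$ depend on both $i$ and $j$ (communication versus computation delays), so the mismatch $E_k$ is a sum over $j\in\mathcal{N}_i$ of terms evaluated at \emph{different} past iterates, and each of those past iterates differs from $\mathbf{x}^k$ by up to $D$ single-block updates. Converting this double-index, double-delay mess into a clean telescoping bound whose coefficient is $\rho^2 D^2$ (matching the stated stepsize condition) is where all the constants $L_i$, $\rho$, $L$, $N$, $B$, $D$ in $C_1$ are produced, and where the argument must be executed most carefully.
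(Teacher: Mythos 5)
Your proposal is correct and follows essentially the same route as the paper: a per-iteration sufficient-decrease inequality with a delayed-gradient error term (Proposition~\ref{Prop_best_response}(a) plus the descent lemma), telescoping of the delayed terms over windows of length $D$ (which the paper packages into the Lyapunov function $\tilde V$ of \eqref{lyap} rather than summing directly), extension from the updated block to the full stationarity measure $M_V$ via Assumption D1 and the Lipschitz continuity of the best-response map (Lemma~\ref{tworesults}), and the $1/\epsilon$ bound by summing $M_V(\mathbf{x}^k)$ against the accumulated decrease. The only point to execute carefully is the one you already flag in passing: under A2(a) the uniform constant $L$ is only valid on the compact set $\bar{\mathcal{L}}^0$, so the descent inequality and the level-set containment $\mathbf{x}^k\in\mathcal{L}^0$ must be established jointly by induction on $k$ (as in Lemma~\ref{descent}), rather than by first summing to $T$ and then deducing boundedness.
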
	
		\begin{proof}
			See Appendix \ref{proof_general}.\vspace{-0.2cm}
		\end{proof}
		  Theorem \ref{compl}	provides a unified set of convergence conditions for several algorithms,  asynchronous models and communication protocols.
		Note that when   $D =0$,
		the condition on $\gamma$, reduces to the renowned condition  used in the synchronous proximal-gradient algorithm.   The term $D^2$ in the denominator of the upper-bound on $\gamma$ should then  be seen as the price to pay for asynchrony: the larger the possible delay $D$,  the smaller $\gamma$,   to make the algorithm robust to asynchrony/delays.

		Theorem \ref{linear} improves on the convergence of DAsyFLEXA, when $V$  satisfies the error  bound condition in Assumption B. Specifically, convergence of the whole sequence
		$\{ \mathbf{x}^k\}$   to a stationary solution $\mathbf{x}^\star$ is established (in contrast with subsequence  convergence   in Theorem \ref{compl} (b)], and  suitable subsequences that converge \emph{linearly} are  identified.\vspace{-0.2cm}
		\begin{theorem}\label{linear}
			Given Problem \eqref{problem} under Assumptions A and B, let $\{\mathbf{x}^k\}$  be the
			sequence generated by DAsyFLEXA, under  Assumptions C and D. Suppose that  $ {\gamma}/{\tau}>0$ is sufficiently small. Then, $\{\mathbf{x}^{t+kB}\}$ and $\{V(\mathbf{x}^{t+kB})\}$,   $t\in\{0,\ldots, B-1\}$, converge at least R-linearly  to  {some} $\mathbf{x}^\star{\in\mathcal{X}^\star}$ and $V^\star\triangleq V(\mathbf{x}^\star)$, respectively, that is
			\begin{equation*}
				V(\mathbf{x}^{t+kB})-V^\star=\mathcal{O}\left(\lambda^{t+kB}\right),
			\end{equation*}
			\begin{equation*}
				\|\mathbf{x}^{t+kB}-\mathbf{x}^\star\|=\mathcal{O}\left(\sqrt{\lambda^{t+kB}}\right),
			\end{equation*}
			where $\lambda\in(0;1)$ is a constant defined in Appendix \ref{proof_linear} [cf. \eqref{lambdalab}], which depends on $\rho, L_i, i\in\mathcal{N}, L, \tau, \gamma, N, B$, and $D$.\vspace{-0.2cm}
		\end{theorem}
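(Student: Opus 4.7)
The plan is to strengthen the per-iteration descent underlying Theorem~\ref{compl} into a window descent over blocks of $B$ consecutive iterations, combine it with the Luo-Tseng error bound of Assumption~B to obtain a Q-linear contraction for $V(\mathbf{x}^{t+kB})-V^\star$, and finally transfer this contraction to the iterates themselves. First I would revisit the descent estimate produced inside the proof of Theorem~\ref{compl}: it supplies a constant $c_0>0$ such that $V(\mathbf{x}^k)-V(\mathbf{x}^{k+1})\geq c_0\,\|\widehat{\mathbf{x}}_{i^k}^k-\mathbf{x}_{i^k}^k\|_2^2$, modulo a correction term involving $\sum_{\ell=k-D}^{k-1}\|\mathbf{x}^{\ell+1}-\mathbf{x}^\ell\|_2^2$ (the usual price of asynchrony). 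Summing over $B$ consecutive iterations and invoking D1, every block $i\in\mathcal N$ is touched at least once in the window, so after absorbing delay terms one obtains
\begin{equation*}
V(\mathbf{x}^k)-V(\mathbf{x}^{k+B})\;\geq\;c_1\sum_{\ell=k-D}^{k+B-1}\|\widehat{\mathbf{x}}_{i^\ell}^\ell-\mathbf{x}_{i^\ell}^\ell\|_2^2.
\end{equation*}
Using strong convexity of the surrogates (C1), Lipschitzness of $\nabla_{\mathbf{x}_i}f_j$ and of the best-response map, and D2 to bound the discrepancy between outdated and synchronous arguments, I would next show that the synchronous residual $\|\widehat{\bar{\mathbf{x}}}_i^k-\mathbf{x}_i^k\|_2$ is controlled by the asynchronous residuals $\|\widehat{\mathbf{x}}_{i^\ell}^\ell-\mathbf{x}_{i^\ell}^\ell\|_2$ for $\ell\in[k-D,k+B-1]$ plus telescoping iterate increments, yielding
\begin{equation*}
V(\mathbf{x}^k)-V(\mathbf{x}^{k+B})\;\geq\;c_2\,M_V(\mathbf{x}^k).
\end{equation*}

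Second, I would identify $V^\star$: from the descent, $\{V(\mathbf{x}^k)\}$ is non-increasing with limit $V^\infty$; every limit point of $\{\mathbf{x}^k\}$ lies in $\mathcal X^\star$ by Theorem~\ref{compl}(a) and has $V$-value $V^\infty$ by continuity, so B2 combined with $\|\mathbf{x}^{k+1}-\mathbf{x}^k\|_2\to 0$ forces all limit points into a single connected component of $\{\mathbf{x}\in\mathcal X^\star:V(\mathbf{x})=V^\infty\}$, and $V^\star:=V^\infty$ is well defined. For $k$ large enough, $\mathbf{x}^k$ lies in the neighbourhood where B1 applies, and the proximal residual $\|\mathbf{x}^k-\texttt{prox}_G(\nabla F(\mathbf{x}^k)-\mathbf{x}^k)\|_2$ is equivalent, up to a constant depending only on $\tau$ and $L$, to $\|\widehat{\bar{\mathbf{x}}}^k-\mathbf{x}^k\|_2=\sqrt{M_V(\mathbf{x}^k)}$, because both quantities measure optimality of strongly convex proximal-type subproblems at $\mathbf{x}^k$ differing only in the chosen surrogate. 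Hence B1 yields $d(\mathbf{x}^k,\mathcal X^\star)\leq\kappa'\sqrt{M_V(\mathbf{x}^k)}$. A standard smoothness/convexity computation, using $P_{\mathcal X^\star}(\mathbf{x}^k)\in\mathcal X^\star$ as test point in the optimality condition defining $\widehat{\bar{\mathbf{x}}}^k$, then gives
\begin{equation*}
V(\mathbf{x}^k)-V^\star\;\leq\;c_3\bigl(M_V(\mathbf{x}^k)+d(\mathbf{x}^k,\mathcal X^\star)\,\sqrt{M_V(\mathbf{x}^k)}\bigr)\;\leq\;c_4\,M_V(\mathbf{x}^k),
\end{equation*}
where the final inequality applies the error bound.

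Combining the two displays produces the key recursion $V(\mathbf{x}^{k+B})-V^\star\leq(1-c_2/c_4)\,(V(\mathbf{x}^k)-V^\star)$; choosing $\gamma/\tau$ sufficiently small ensures $c_2/c_4<1$ and defines $\lambda\in(0,1)$, so $V(\mathbf{x}^{t+kB})-V^\star=\mathcal O(\lambda^{t+kB})$ for every $t\in\{0,\dots,B-1\}$. To transfer the rate to the iterates, I would telescope the per-iteration descent together with Cauchy-Schwarz and geometric summation to bound $\sum_{s\geq k}\|\mathbf{x}^{s+1}-\mathbf{x}^s\|_2$ by $\mathcal O\bigl(\sqrt{V(\mathbf{x}^k)-V^\star}\bigr)=\mathcal O\bigl(\sqrt{\lambda^{k}}\bigr)$; this simultaneously shows $\{\mathbf{x}^k\}$ is Cauchy, hence convergent to a single $\mathbf{x}^\star\in\mathcal X^\star$, and furnishes the announced R-linear rate on $\|\mathbf{x}^{t+kB}-\mathbf{x}^\star\|_2$. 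The main obstacle is the first step: correctly propagating delays of length up to $D$ through the surrogate and gradient Lipschitz bounds so that the synchronous measure $M_V(\mathbf{x}^k)$ is dominated by the asynchronous descent over a $B$-window, with constants explicit enough to pin down the admissible range of $\gamma/\tau$ and the contraction factor $\lambda$ in terms of $\rho,L_i,L,\tau,\gamma,N,B,D$ as announced.
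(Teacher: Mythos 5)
Your overall template---a $B$-step sufficient-descent inequality, a cost-to-go estimate obtained from the Luo--Tseng error bound, and a contraction on $V-V^\star$ that is then transferred to the iterates---is the right one and matches the skeleton of the paper's argument (which also identifies $V^\star$ exactly as you do, via B2 and the vanishing of $\|\mathbf{x}^\star(\mathbf{x}^k)-\mathbf{x}^\star(\mathbf{x}^{k+1})\|_2$). However, there is a genuine gap at the step you yourself flag as ``the main obstacle,'' and it cannot be closed in the form you propose. The clean window descent $V(\mathbf{x}^k)-V(\mathbf{x}^{k+B})\geq c_1\sum_{\ell=k-D}^{k+B-1}\|\Delta\widehat{\mathbf{x}}_{i^\ell}^\ell\|_2^2$ is false in general: the delay terms indexed by $\ell\in[k-D,k-1]$ enter the descent estimate with the \emph{wrong sign} [see \eqref{descent_bis}, where they appear as $+\frac{B\gamma^2DL\rho^2}{2}\sum_{l=k-D}^{k-1}\|\Delta\widehat{\mathbf{x}}_{i^l}^l\|_2^2$ on the upper-bound side]. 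They cannot simply be ``absorbed'' into a nonnegative lower bound; only the Lyapunov function $\tilde V$ of \eqref{lyap} decreases monotonically, not $V$ itself. Consequently the recursion one actually obtains is not the clean $V(\mathbf{x}^{k+B})-V^\star\leq\theta\left(V(\mathbf{x}^k)-V^\star\right)$ but the perturbed version \eqref{tseng}, carrying a residual $\zeta\sum_{l=k-D}^{k-1}\|\Delta\widehat{\mathbf{x}}_{i^l}^l\|_2^2$, and with only $\liminf_k V(\mathbf{x}^k)\geq V^\star$ available [cf. \eqref{liminf}] rather than $V(\mathbf{x}^k)\geq V^\star$ for all large $k$. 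The paper closes the loop by invoking Lemma 4.5 of \cite{tseng1991rate}, which is designed precisely for this three-inequality system (sufficient descent with memory, liminf lower bound, perturbed contraction) and shows that the residual itself decays geometrically; some such device is unavoidable here, and your proposal contains no substitute for it.

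A second, related underestimate concerns the cost-to-go step. You state $V(\mathbf{x}^k)-V^\star\leq c_4M_V(\mathbf{x}^k)$ at the single point $\mathbf{x}^k$ with the synchronous residual, as in the classical synchronous analysis. In the asynchronous setting the paper must instead control $V(\mathbf{x}^{k+B})-V(\mathbf{x}^\star(\mathbf{x}^k))$, where $\mathbf{x}^{k+B}$ is reached through $B$ uncoordinated updates computed with delayed variables and delayed gradients; this is Lemma \ref{lemmarate}, whose proof decomposes the increment agent-by-agent and update-by-update within the window (terms II--IV and the error sequences $a^k_{i,\cdot}$, $b^k_{i,t,\cdot}$, $c^k_{i,\cdot}$) and occupies most of the appendix. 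A one-line ``standard smoothness/convexity computation'' at $\mathbf{x}^k$ does not produce an inequality that can be paired with the descent over $[k,k+B]$. The parts of your plan that do work---the identification of $V^\star$ via B2, the use of Proposition \ref{Prop_best_response}(d) to convert the proximal residual into $\|\widehat{\mathbf{x}}(\bar{\mathbf{x}}^k)-\mathbf{x}^k\|_2$, and the final telescoping/Cauchy--Schwarz step yielding convergence of the iterates at rate $\mathcal{O}(\sqrt{\lambda^{k}})$---coincide with what the paper does.
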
	
		\begin{proof}
			See Appendix \ref{proof_linear}.
		\end{proof}
		In essence,  the theorem proves  a $B$-steps linear convergence rate.
To the best of our knowledge,    this is the first (linear) convergence rate  result in the literature for an asynchronous algorithm in the setting considered in this paper.\vspace{-0.2cm}
\section{Numerical Results}
\label{numerical_results}
In this section we report some numerical results on the two problems described in Section \ref{prob_stat}.
 {All our experiments were run} on the Archimedes1 cluster computer at Purdue University, equipped with two 22-cores Intel E5-2699Av4 processors (44 cores in total) and 512GB of RAM. {Code for the LASSO problem was written in MATLAB R2019a; code for the Matrix Completion problem was written in C++ using the OpenMPI library for parallel and asynchronous operations.}\vspace{-0.3cm}

\subsection{Distributed LASSO}
\noindent {\textbf{Problem setting}.} We simulate the  (convex) LASSO problem   stated in \eqref{lasso}.
The underlying sparse linear model is generated as follows: $\mathbf{b} = \mathbf{A}{\mathbf{x}^\star}+\mathbf{e}$,
where $\mathbf{A}{\in\mathbb{R}^{15000\times30000}}$. $\mathbf{A}$, ${\mathbf{x}^\star}$ and $\mathbf{e}$ have i.i.d. elements, drawn
from a Gaussian $\mathcal{N}(0,\sigma^2)$ distribution, with $\sigma=1$ for $\mathbf{A}$
and ${\mathbf{x}^\star}$, and $\sigma=0.1$ for the noise vector $\mathbf{e}$. Entries of  $\mathbf{A}$ are then normalized by  $\|\mathbf{A}\|$. To impose
sparsity on ${\mathbf{x}^\star}$ and $\mathbf{A}$, we randomly set to zero 95\% of their components. Finally, in \eqref{lasso}, we set $\lambda=1$.

\noindent\textbf{Network setting}. We consider  a fixed, undirected network composed of 50 agents;   $\mathbf{x}\in\mathbb{R}^{30000}$ is partitioned in 50 block-variables $\mathbf{x}_i\in\mathbb{R}^{600}$, $i\in\{1,\ldots,50\}$, each of them controlled by one  agent. We define the local functions $f_i$ and $g_i$ as described in   Sec. \ref{prob_stat} (cf. Ex. \#2);  each $\mathbf{A}_i$ (resp. $\mathbf{b}_i$) is all zeros but    its $i$th row (resp. component),  which coincides with that of $\mathbf{A}$ (resp. $\mathbf{b}$). This induces the following communication pattern among the agents:  each agent $i$ is connected   only to the agents $j$s owning the   $\mathbf{x}_j$s corresponding to the nonzero column-entries of $\mathbf{A}_i$.

\noindent{\textbf{Algorithms.} We simulated the following algorithms:}

 {$\bullet$ \textbf{DAsyFLEXA}: we used}    the   surrogate functions 
\begin{align}
\label{lassosurr}
&\tilde{f}_i\left(\mathbf{x}_i;\mathbf{x}_{\mathcal{N}_i}^{k-\mathbf{d}^k(i,i)}\right)\\ \notag &
=\left\langle\nabla_{\mathbf{x}_i}f_i\left(\mathbf{x}_{\mathcal{N}_i}^{k-\mathbf{d}^k(i,i)}\right),\mathbf{x}_i-\mathbf{x}_i^k\right\rangle+\frac{\tau_i}{2}\|\mathbf{x}_i-\mathbf{x}_i^k\|_2^2,
\end{align}
 where $\tau_i>0$ is a tunable parameter, which is updated following the same heuristic used in \cite{cannelli2017asynchronous}. The stepsize $\gamma$ is set to $0.9$. Note that, using \eqref{lassosurr}, problem \eqref{best_resp} has a closed-form solution via the renowned soft-thresholding operator.
 
 {$\bullet$ \textbf{PrimalDual} asynchronous algorithm \cite{wu2018decentralized}: }this seems to be the   distributed asynchronous scheme  closest to DAsyFLEXA.  {Note that} there are some important differences between the two algorithms.  First,   the PrimalDual  algorithm \cite{wu2018decentralized}   does not exploit the sparsity pattern of the objective function $V$; every agent instead  controls and updates a local copy of the {\it entire }vector $\mathbf{x}$, which requires employing a consensus mechanism to enforce an agreement  on such local copies. This leads to an unnecessary communication overhead among the agents. Second, no explicit estimate of the gradients of the other agents  is employed; the lack of this knowledge is overcome by introducing additional communication variables, which lead to contribute to increase the communication cost.  
Third,  the PrimalDual  algorithm does not have convergence guarantees in the  nonconvex case. In our simulations we tuned the stepsizes of \cite{wu2018decentralized} by hand  in order to obtain the best performances; specifically we set $\alpha=0.9$, and $\eta_i=1.5$ for $i=1,\ldots,50$ (see \cite{wu2018decentralized} for details on these parameters). 

 {$\bullet$\textbf{AsyBADMM}: this is a block-wise asynchronous ADMM, introduced in \cite{zhu2018block} to solve nonconvex and nonsmooth optimization problems. Since AsyBADMM requires the presence of master and worker nodes in the network, to implement it  on a meshed networks,  we selected uniformly at random 5 nodes of the network as servers  while the others acting as workers. The parameters of the algorithm (see \cite{zhu2018block} for details) are tuned by hand in order to obtain the best performances; specifically we set $\gamma=0.06$, $C=10^{4}$, and $\rho_{ij}=50$, for all $(i,j)$.}}

 {All the algorithms   are initialized from the same randomly chosen point, drawn from $\mathcal{N}(0,1)$.}

\noindent{\textbf{Asynchronous model.}} We simulate  the following asynchronous model. Each agent is activated periodically, every time a local clock triggers. The agents' local clocks have the same frequency but   different phase shift,  which are selected uniformly   at random within  $[5, 50]$. Based upon its activation, each agent: i)   performs its  update  and then broadcasts its gradient vector $\nabla_{\mathbf{x}_i}f_i$ together with its own block-variable $\mathbf{x}_i$ to the agents in $\mathcal{N}_i\backslash\{i\}$; and ii)  modifies the phase shift of its local clock by selecting uniformly at random a new   value. 


Figure \ref{time-lasso} plots   relative error $({V(\mathbf{x}^k)-V^\star})/{V^\star}$ of the different methods versus the number of iterations.  {Figure \ref{comm-lasso} shows the same function  versus the number of message exchanges per agent}; each scalar variable sent from an agent to one of its neighbor is counted as one message exchanged. All the curves are averaged over   10 independent realizations.
\begin{figure}[h]
	\centering\vspace{-0.4cm}
	\includegraphics[scale=0.27]{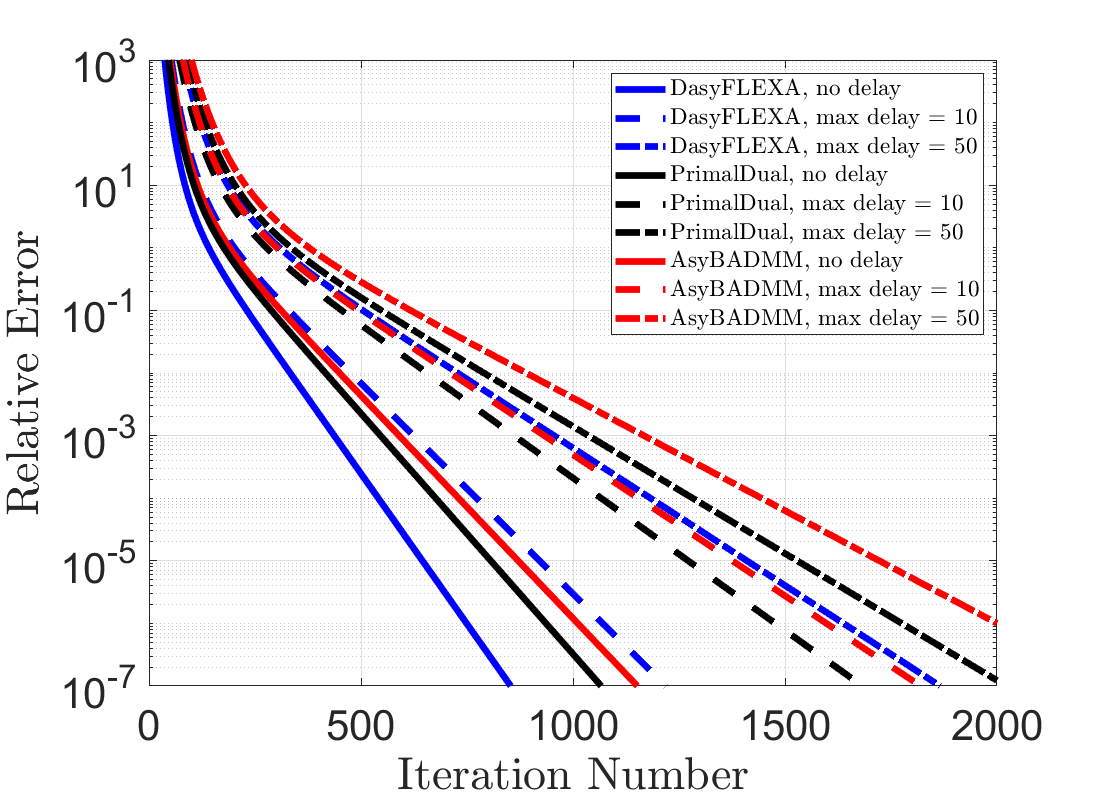}
	\caption{LASSO problem: Relative error vs. \#  of iterations.}
	\label{time-lasso}\vspace{-0.1cm}
\end{figure}

\begin{figure}[h]\vspace{-0.7cm}
	\centering
	\includegraphics[scale=0.27]{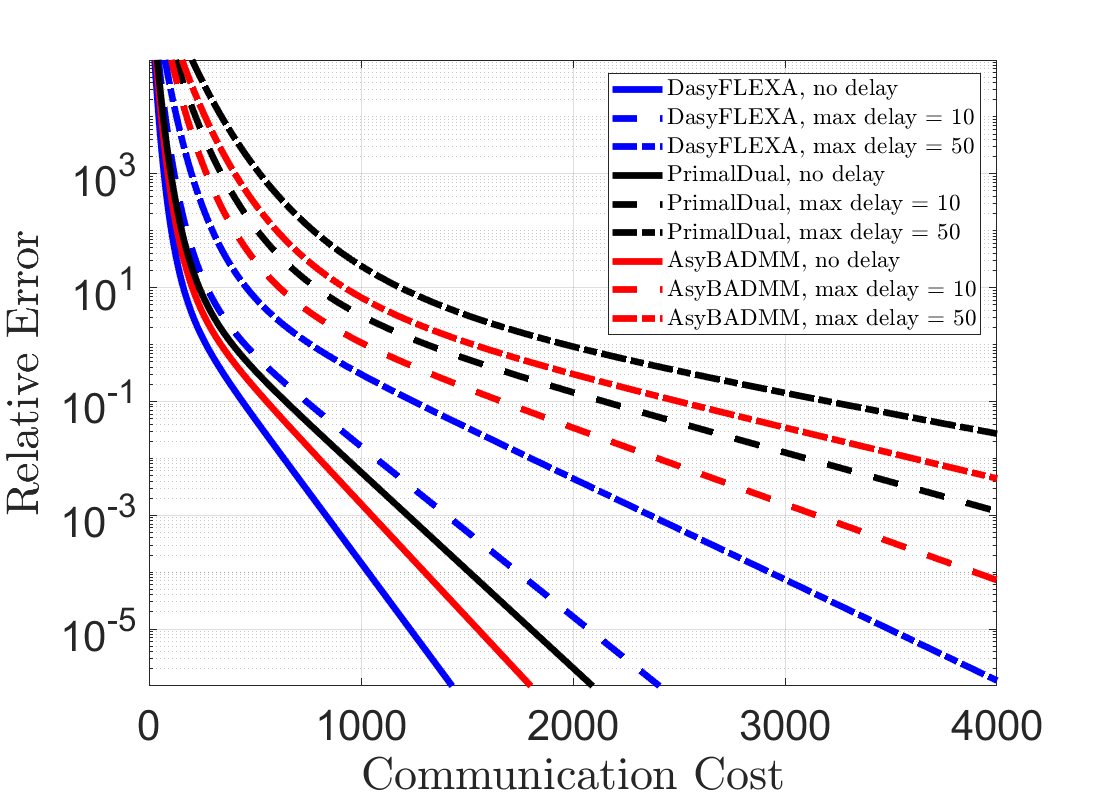}
	\caption{LASSO problem: Relative error vs. \#  of message exchanges.}\vspace{-0.2cm}
	\label{comm-lasso}
\end{figure}

 {DAsyFLEXA  outperforms the PrimalDual scheme \cite{wu2018decentralized} and AsyBADMM \cite{zhu2018block}. Also,  as anticipated,  PrimalDual requires much more communications than DAsyFLEXA.} 
 \vspace{-0.3cm}

\subsection{Distributed Matrix Completion}
In this section we consider the   Distributed Matrix Completion problem \eqref{mc}.
We generate a $2200\times 2200$ matrix $\mathbf{Z}$ with samples drawn from $\mathcal{N}(0,1)$; and we set  $\lambda=\xi=1$ and $r=4$. {Each core of our cluster computer represents a different agent; the columns of $\mathbf{X}$  and $\mathbf{Y}$ are equally partitioned  across the 22 cores, and those of $\mathbf{Y}$ uniformly among the other 11 cores; and all cores access a shared memory where the data are stored.} We sampled uniformly at random 10\% of the entries of $\mathbf{Z}$, and  distributed these samples $z_{mn}$  to the agents owing the corresponding column $\mathbf{x}_m$ of $\mathbf{X}$ or  $\mathbf{y}_n$ of  $\mathbf{Y}$, choosing randomly between the two.

We applied the following instance of  DAsyFLEXA to \eqref{mc}. Consider one of the agents that optimizes some columns of $\mathbf{X}$, say agent $i$. Since each $f_i$ is biconvex in  $\mathbf{X}$ and $\mathbf{Y}$, the following surrogate function satisfies Assumption C:
\begin{align}
\label{surr}
&\tilde{f}_i\left(\{\mathbf{x}_m\}_{m\in X_i};(\mathbf{X},\mathbf{Y})_{\mathcal{N}_i}^{k-\mathbf{d}^k(i,i)}\right)\\ \notag &
=\!\dfrac{1}{2}\sum\limits_{(m,n)\in\Omega_i}\!\!\!\!\left(\mathbf{x}_m^T\mathbf{y}_n^{k-d_{j(n)}^k(i,i)}\!-\!z_{mn}\right)^2
\!\!+\dfrac{\tau_i}{2}\!\!\!\!\sum\limits_{(m,n)\in\Omega_i}\!\!\!\!\! \|\mathbf{x}_m\!-\!\mathbf{x}_m^k\|_2^2;
\end{align}
where $j(n)$ is the index $j\in\mathcal{N}_i$ of the agent that controls   $\mathbf{y}_n$, and $\tau_i>0$ is  updated following the same heuristic used in \cite{cannelli2017asynchronous} (the surrogate function for the agents that update columns of $\mathbf{Y}$ is the same as \eqref{surr}, with the obvious change of variables). Note that \eqref{surr} preserves the block-wise  convexity present in the original function $f_i$, which contrasts  with the common approach in the literature based on the linearization of $f_i$.  Problem \eqref{best_resp} with the surrogate  \eqref{surr} has a closed-form solution. 

 {We compare our algorithm with  the decentralized ADMM version of ARock, as presented in \cite{peng2016arock}. Even if this method has convergence guarantees for convex problems only, its performances on this experiment appeared to be good. For ARock we fixed $\eta^k=0.9$, for all $k$, and $\gamma=10$, which are the values that gave us the best performances in the experiments.}


The rest of the setup   is the same as that described for the LASSO problem.
Figure \ref{time-mc} and Figure \ref{comm-mc} plot $\|\widehat{\mathbf{x}}^k-\mathbf{x}^k\|_{\infty}$ (a valid measure of stationarity), with   $\widehat{\mathbf{x}}_i$ defined as in \eqref{best_resp}, obtained by DAsyFLEXA and the PrimalDual algorithm \cite{wu2018decentralized}  versus  {the CPU time (measured in seconds) and message exchanges per agent.} On our tests, we observed that all the algorithms converged to the same stationary solution.
 The results confirm the behavior observed in the previous section for convex problems: DAsyFLEXA has better performances than PrimalDual, and the difference is mostly significant is terms of communication cost.  {DAsyFLEXA is also more efficient than ARock, which suffers from a similar drawback of PrimalDual for what concerns the number of message exchanges; this is due to the fact that  ARock requires the use of dual variables, which cause a communication overhead.}
\begin{figure}[t]
	\centering\vspace{-.5cm}
	\includegraphics[scale=0.27]{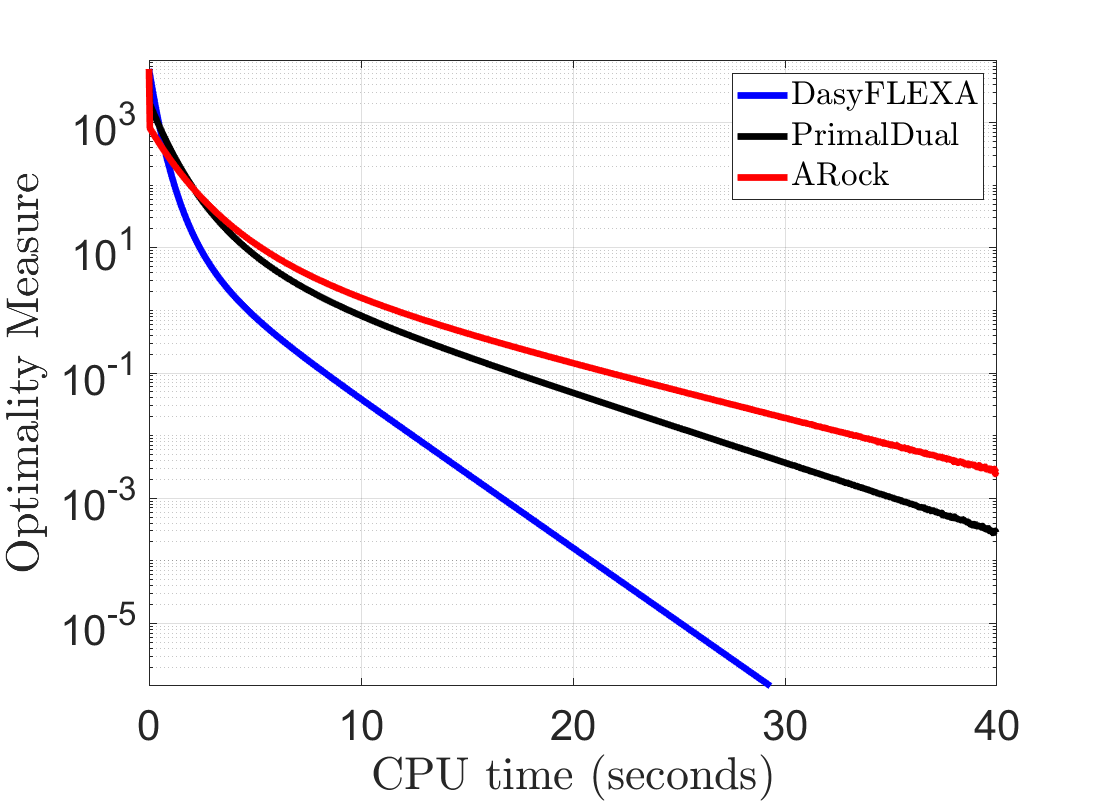}
	\caption{Matrix completion: stationarity distance vs. \# CPU time (in seconds). }
	\label{time-mc}
\end{figure}
\begin{figure}[h]
	\centering\vspace{-0.5cm}
	\includegraphics[scale=0.27]{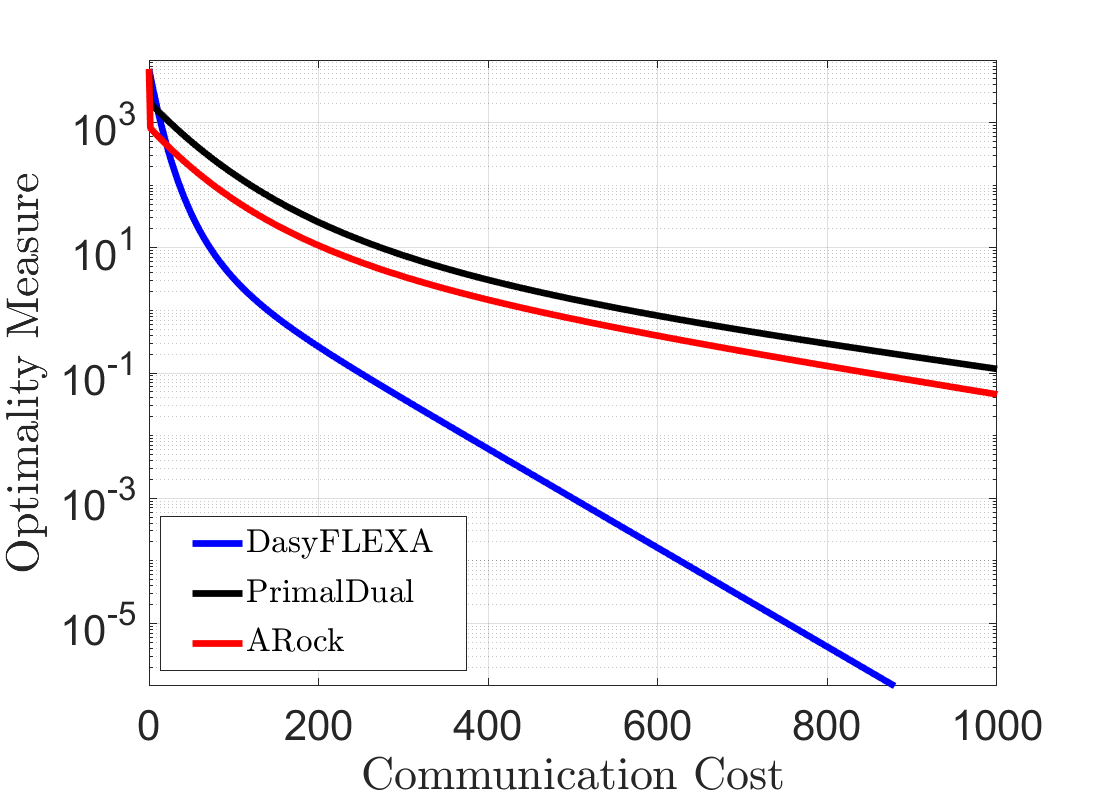}
	\caption{Matrix completion: stationarity distance vs. \# of  message exchanges. }
	\label{comm-mc} \vspace{-0.4cm}
\end{figure}
\vspace{-0.3cm}
\section{Appendix}\vspace{-0.1cm}
In this Appendix we prove   Theorems \ref{compl} and \ref{linear}. 
\vspace{-0.5cm}
\subsection{Notation}\label{notation}
	Vectors $\mathbf{x}_{\mathcal{N}_j}^{k-\mathbf{d}^k(i,j)}$ have different length. It is convenient to replace them with equal-length vectors retaining of course the same information. This is done   introducing   the following (column) vectors $\mathbf{x}^k(i,j)\triangleq (\mathbf{x}^k_l(i,j))_{l=1}^N\in\mathcal{X}$, defined as: 

\begin{subequations}\label{eq:equal-lenght_vec}
		\begin{equation}	
		[\mathbf{x}^k_l(i,j)]_{l\in\mathcal{N}_j}\triangleq\mathbf{x}_{\mathcal{N}_j}^{k-\mathbf{d}^k(i,j)}, \end{equation}
			\begin{equation}	 \mathbf{x}_l^k(i,j)=\mathbf{x}^k_l,\quad l\notin\mathcal{N}_j.\end{equation}
	\end{subequations}  
	In words, the blocks of  $\mathbf{x}^k(i,j)$  indexed by $\mathcal{N}_j$   coincide  with $\mathbf{x}_{\mathcal{N}_j}^{k-\mathbf{d}^k(i,j)}$   whereas the other block-components, irrelevant to the proofs,  are conveniently set   to their most up-to-date values. We will use the shorthand $\mathbf{x}_{\mathcal{N}_j}^k(i,j)\triangleq
		[\mathbf{x}^k_l(i,j)]_{l\in\mathcal{N}_j}$. 

	Since at each iteration $k\geq0$  only one block of   $\mathbf{x}^k$ is updated, and because of Assumption D2, 
	it is not difficult to check that the delayed vector $\mathbf{x}^k(i,j)$ can be written as\vspace{-0.1cm}
\begin{equation}
\mathbf{x}^k(i,j)=\mathbf{x}^k+\sum\limits_{l\in \mathcal{K}^k(i,j)}(\mathbf{x}^l-\mathbf{x}^{l+1}),
\label{x_tilde}\vspace{-0.1cm}
\end{equation}
where $\mathcal{K}^k(i,j)$ is a subset of $\{k-D,\ldots,k-1\}$ whose elements depend on which block variables have been updated in the window $[\max\{0,k-D\},\max\{0,k-1\}]$. Recall that it is assumed $\mathbf{x}^t=\mathbf{x}^0$, for $t<0$.

	Finally,  { notice that the notation $\widehat{\mathbf{x}}_i^k$ for the best-response map \eqref{best_resp} is a shorthand for the formal expression $\widehat{\mathbf{x}}_i(\tilde{\mathbf{x}}^k(i))$, where $\tilde{\mathbf{x}}^k(i)\triangleq\left[\mathbf{x}_{\mathcal{N}_j}^{k-\mathbf{d}^k(i,j)}\right]_{j\in\mathcal{N}_i}$. Similarly, $\widehat{\bar{\mathbf{x}}}_i^k$ (resp. $\widehat{\bar{\mathbf{x}}}^k$) in \eqref{def bar x} is a shorthand for $\widehat{\mathbf{x}}_i(\bar{\mathbf{x}}^k(i))$ (resp. $\widehat{\mathbf{x}}(\bar{\mathbf{x}}^k)$), where $\bar{\mathbf{x}}^k(i)\triangleq\left[\mathbf{x}^k_{\mathcal{N}_j}\right]_{j\in\mathcal{N}_i}$ (resp. $\bar{\mathbf{x}}^k\triangleq\left[\bar{\mathbf{x}}^k(i)\right]_{i\in\mathcal{N}}$). We also define the following shorthands}: \vspace{-0.1cm}
\begin{equation}\label{subs2}
		\Delta\widehat{\mathbf{x}}^k\triangleq\left[\Delta\widehat{\mathbf{x}}_i^k\right]_{i\in\mathcal{N}},\quad \Delta\widehat{\mathbf{x}}_i^k\triangleq\widehat{\mathbf{x}}_i^k-\mathbf{x}_i^k.
	\end{equation}Table \ref{tabella} summarizes the main notation used in the paper.
\noindent\textbf{On the  constant $L$.} The proofs rely on some Lipschitz properties of
$\nabla_{\mathbf{x}_j}f_i$'s. To provide a unified proof under either   A2(a) or A2(b), we introduce  a constant $L>0$ whose value depends on whether A2(a) or A2(b) hold. Specifically: \\
\noindent$\bullet$ {\bf A2(a) holds}:  the gradients $\nabla_{\mathbf{x}_j}f_i$'s are not globally Lipschitz on the sets $\mathcal{X}_{\mathcal{N}_i}$'s; our approach to study convergence is to ensure that they are Lipschitz continuous on suitably defined sets containing the sequences  
generated by Algorithm~\ref{algorithm1}.
We define these sets as follows. Define first the set $\texttt{Cube}\triangleq\left\{\mathbf{w}\in\mathcal{X}:\|\mathbf{w}\|_{\infty}\leq U\right\}$, where $U$ is positive constant that ensures $\mathcal{L}^0\subseteq\texttt{Cube}$ (note that $U<+\infty$ because $\mathcal{L}^0$ is bounded). Then, we 
define a proper widening $\bar{\mathcal{L}}^0$ of $\mathcal{L}^0$: $\bar{\mathcal{L}}^0\triangleq\left(\mathcal{L}^0+\psi\mathcal{B}\right)\cap\mathcal{X}$, where $\mathcal{B}$ is the unitary ball centered in the origin, and $\psi>0$ is a finite positive constant defined as
\begin{equation}
\psi\triangleq\underset{i\in\mathcal{N}}{\max}\;\underset{\substack{\tilde{\mathbf{w}}(i)\triangleq[\mathbf{w}_{\mathcal{N}_j}(j)]_{j\in\mathcal{N}_i}\\\mathbf{w}(j)\in\texttt{Cube}}}{\max}\;\|\widehat{\mathbf{x}}_i\left(\tilde{\mathbf{w}}(i)\right)-\mathbf{w}_i(i)\|_2\label{psi}.
\end{equation}Note that $\bar{\mathcal{L}}^0$ is compact, because $\mathcal{L}^0$ is bounded and $\psi<+\infty$ [given that $\texttt{Cube}$ is bounded and $\widehat{\mathbf{x}}(\cdot)$ is continuous, due to  \eqref{best_resp}, A2, A3, and C3]. Consider now any vector $\mathbf{x}\in\bar{\mathcal{L}}^0$. A2(a) and compactness of $\bar{\mathcal{L}}^0$ imply that the gradients $\nabla_{\mathbf{x}_j}f_i$'s are globally Lipschitz over the sets containing the subvectors $\mathbf{x}_{\mathcal{N}_i}$'s, with $L$ being the maximum value of the Lipschitz constant of all the gradients over these sets.\\
\noindent$\bullet$ {\bf A2(b) holds:} In this case, $L$ is simply the global Lipschitz constant $\nabla_{\mathbf{x}_i} f_i$ over the whole space.	
\begin{table}[h]	
\centering\small
\begin{tabular}{|>{\centering\arraybackslash}m{1.2in}| >{\centering\arraybackslash}m{2in}|} 
\hline
\textbf{Symbol} & \textbf{Definition}\\
\hline\hline
$V(\mathbf{x})$, cf. \eqref{problem} & $F(\mathbf{x})+G(\mathbf{x})$ \\ 
\hline
$F(\mathbf{x})$, cf. \eqref{problem}  & $\sum\limits_{i=1}^Nf_i\left(\mathbf{x}_{\mathcal{N}_i}\right)$\\
\hline
$G(\mathbf{x})$, cf. \eqref{problem}  & $\sum\limits_{i=1}^Ng_i\left(\mathbf{x}_i\right)$\\
\hline
$\mathbf{x}$, cf. \eqref{problem}  & Optimization variable\\
\hline
$\mathbf{x}_i$, cf. \eqref{problem}  & Block-variable of agent $i$\\
\hline
$\mathbf{x}_{\mathcal{N}_i}$, cf. \eqref{problem}  & Block-variables of agent $i$'s set of neighbors: $[\mathbf{x}_j]_{j\in\mathcal{N}_i}$\\
\hline
$\mathbf{x}_{\mathcal{N}_j}^{k-\mathbf{d}^k(i,j)}$ & Agent $i$'s local copy of agent $j$'s vector $\mathbf{x}_{\mathcal{N}_j}^k$, possibly delayed\\
\hline
$\mathbf{x}^k(i,j)$, cf.  \eqref{eq:equal-lenght_vec}&Same as $\mathbf{x}_{\mathcal{N}_j}^{k-\mathbf{d}^k(i,j)}$, with the addition of slack elements to fix dimensionality\\
\hline
$\widehat{\mathbf{x}}_i^k/\widehat{\mathbf{x}}_i(\tilde{\mathbf{x}}^k(i))$, cf. \eqref{best_resp} & Solution of subproblem \eqref{best_resp}\\
\hline
$\tilde{\mathbf{x}}^k(i)$ & Agent $i$'s local copies of his neighbors vectors $[\mathbf{x}_{\mathcal{N}_j}^k]_{j\in\mathcal{N}_i}$, possibly delayed: $\left[\mathbf{x}_{\mathcal{N}_j}^{k-\mathbf{d}^k(i,j)}\right]_{j\in\mathcal{N}_i}$\\
\hline
$\bar{\mathbf{x}}^k$& Collection of all the $\tilde{\mathbf{x}}^k(i)$'s: $\left[\bar{\mathbf{x}}^k(i)\right]_{i\in\mathcal{N}}$\\
\hline
$\widehat{\bar{\mathbf{x}}}_i^k/\widehat{\mathbf{x}}_i(\bar{\mathbf{x}}^k(i))$, cf. \eqref{def bar x} & Solution of subproblem \eqref{best_resp} wherein all the delays are set to 0\\
\hline
$\bar{\mathbf{x}}^k(i)$& Same structure of $\tilde{\mathbf{x}}^k(i)$ wherein all the delays are set to 0\\
\hline
$\bar{\mathbf{x}}^k$& Collection of all the $\bar{\mathbf{x}}^k(i)$'s: $\left[\bar{\mathbf{x}}^k(i)\right]_{i\in\mathcal{N}}$\\
\hline
\end{tabular}\caption{Table of notation}\label{tabella}
\end{table}\vspace{-.1cm}
\begin{remark}\label{remark_on_L}
To make sense of the complicated definition of
$L$ under A2(a), we anticipate how this constant will be used. Our proof leverages the decent lemma to majorize $V(\mathbf{x}^{k+1})$. To do so, each $\nabla_{\bx_j}f_i$ needs to be globally Lipschitz on a convex set containing $\bx^k$ and $\bx^{k+1}$. This is what the convex set $\bar{\mathcal{L}}^0$  is meant for: $\bx^k$ and $\bx^{k+1}$ belong to $\bar{\mathcal{L}}^0$ and thus $\nabla_{\bx_j}f_i$ is   $L$-Lipschitz continuous.
\end{remark}
\vspace{-0.4cm}
\subsection{Preliminaries}\label{technical}
	 We summarize next some properties of the   map $\widehat{\mathbf{x}}_i^k$ in \eqref{best_resp}.
	\begin{proposition} \label{Prop_best_response}Given Problem \eqref{problem} under Assumption A, let $\{\mathbf{x}^k\}$ be the sequence generated by DAsyFLEXA, under Assumptions B and C. Suppose also that
 $\mathbf{x}^k \in \bar{\mathcal{L}}^0 $ for all $k$. There hold:
		\begin{itemize}			
			\item [(a)][Optimality] For any $i\in\mathcal{N}$ and $k\geq0$,
			\begin{align}
			\nonumber
			&
			 \sum\limits_{j\in\mathcal{N}_i}\left\langle\nabla_{\mathbf{x}_i}f_j(\mathbf{x}^k_{\mathcal{N}_j}(i,j)),\Delta\widehat{\mathbf{x}}^k_i\right\rangle\\&+g_i(\widehat{\mathbf{x}}_i^k)-g_i(\mathbf{x}^k_i)\leq-\tau\|\Delta\widehat{\mathbf{x}}_i^k\|_2^2;\label{comments}
			\end{align}
			\item [(b)][Lipschitz continuity] For any $i\in\mathcal{N}$ and $k,h\geq0$,\vspace{-0.2cm}
			\begin{align}
			&\nonumber\|\widehat{\mathbf{x}}_i^k-\widehat{\mathbf{x}}_i^h\|_2\leq \label{lipsch}\frac{L_m}{\tau}\|\mathbf{x}^k(i,i)-\mathbf{x}^h(i,i)\|_2\\
			&\qquad +\frac{L}{\tau}\sum\limits_{j\in\mathcal{N}_i\backslash\{i\}}\|\mathbf{x}^k(i,j)-\mathbf{x}^k(i,j)\|_2,
			\end{align}
			where $L_m\triangleq\underset{i\in\mathcal{N}}{\max}\,L_i$;
			\item [(c)][Fixed-points]   $\widehat{\mathbf{x}}(\bar{\mathbf{x}}^k)=\mathbf{x}^k$ if and only if $\mathbf{x}^k$ is a stationary solutions of Problem \eqref{problem} (recall the definition of $\bar{\mathbf{x}}^k$  {in Table \ref{tabella}});
			\item [(d)][Error bound]\vspace{-0.1cm}\label{pro:preliminary}		
				For any $k\geq0$,
				\begin{align}
				&\nonumber\|\mathbf{x}^k-\text{\texttt{prox}}_{G}\left(\mathbf{x}^k-\nabla F(\mathbf{x}^k)\right)\|_2\\
				&\label{lemma3}\leq(1+L+NL_m)\|\widehat{\mathbf{x}}(\bar{\mathbf{x}}^k)-\mathbf{x}^k\|_2.
				\end{align}
		\end{itemize}
	
	\end{proposition}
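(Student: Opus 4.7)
The plan is to dispatch the four claims separately, all exploiting the fact that $\widehat{\mathbf{x}}_i^k$ is the minimizer of a $\tau$-strongly convex, block-separable constrained subproblem, together with the ``gradient consistency'' condition (C2) that ties $\nabla\tilde f_i$ to $\nabla_{\mathbf{x}_i}f_i$ on the diagonal $\mathbf{y}_i=[\mathbf{y}_{\mathcal{N}_i}]_i$. For part (a), I would write the first-order optimality condition of subproblem \eqref{best_resp} at $\widehat{\mathbf{x}}_i^k$ with test point $\mathbf{x}_i^k$. Convexity of $g_i$ (A3) absorbs the subgradient into $g_i(\widehat{\mathbf{x}}_i^k)-g_i(\mathbf{x}_i^k)$, while $\tau$-strong convexity (C1) supplies the slack $-\tau\|\Delta\widehat{\mathbf{x}}_i^k\|_2^2$. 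Since $d_i^k(i,i)=0$, the $i$-th block of $\mathbf{x}^k(i,i)$ coincides with $\mathbf{x}_i^k$, so C2 applied at $(\mathbf{x}_i^k,\mathbf{x}_{\mathcal{N}_i}^k(i,i))$ replaces $\nabla\tilde f_i(\mathbf{x}_i^k;\cdot)$ with $\nabla_{\mathbf{x}_i}f_i(\mathbf{x}_{\mathcal{N}_i}^k(i,i))$, giving \eqref{comments}.

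Part (b) is the standard minimizer-stability argument: write the optimality VIs for $\widehat{\mathbf{x}}_i^k$ and $\widehat{\mathbf{x}}_i^h$ using the opposite point as test, add them, cancel the $\partial g_i$ contributions by monotonicity, and split the remainder into (i) a ``diagonal'' piece lower-bounded by $\tau\|\widehat{\mathbf{x}}_i^k-\widehat{\mathbf{x}}_i^h\|_2^2$ (C1), (ii) a second-argument perturbation of $\nabla\tilde f_i$ controlled by $L_i\leq L_m$ (C3), and (iii) gradient differences $\nabla_{\mathbf{x}_i}f_j(\mathbf{x}^k(i,j))-\nabla_{\mathbf{x}_i}f_j(\mathbf{x}^h(i,j))$, $j\neq i$, controlled by $L$ (A2; under A2(a) the standing hypothesis $\{\mathbf{x}^k\}\subseteq\bar{\mathcal{L}}^0$ is precisely what secures this Lipschitz constant on the relevant set). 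Cauchy--Schwarz and division yield \eqref{lipsch}. For part (c), in the synchronous instance $\bar{\mathbf{x}}^k$ all delays vanish and $\mathbf{x}^k(i,j)=\mathbf{x}^k$, so the block-$i$ subproblem's optimality at the candidate fixed point $\mathbf{x}_i^k$ reads, using C2 to identify $\nabla\tilde f_i(\mathbf{x}_i^k;\mathbf{x}_{\mathcal{N}_i}^k)=\nabla_{\mathbf{x}_i}f_i(\mathbf{x}_{\mathcal{N}_i}^k)$ and the symmetric dependency graph to identify $\sum_{j\in\mathcal{N}_i}\nabla_{\mathbf{x}_i}f_j(\mathbf{x}_{\mathcal{N}_j}^k)$ with $\nabla_{\mathbf{x}_i}F(\mathbf{x}^k)$, exactly the block-$i$ stationarity condition for $V$ at $\mathbf{x}^k$. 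This chain is reversible, and uniqueness of the strongly convex subproblem's minimizer converts it into the claimed ``iff''.

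Part (d) is where I expect the main obstacle. I would combine the VI for $\mathbf{p}_i^k\triangleq[\texttt{prox}_{G}(\mathbf{x}^k-\nabla F(\mathbf{x}^k))]_i$ (test point $\widehat{\mathbf{x}}_i^k$) with the VI for $\widehat{\mathbf{x}}_i^k=\widehat{\mathbf{x}}_i(\bar{\mathbf{x}}^k(i))$ (test point $\mathbf{p}_i^k$), and invoke C2 plus symmetry of the dependency graph to collapse $\nabla_{\mathbf{x}_i}F(\mathbf{x}^k)$ against the $j\neq i$ contributions of the SCA VI. The resulting inequality yields, after Cauchy--Schwarz,
\begin{equation*}
\|\widehat{\mathbf{x}}_i^k-\mathbf{p}_i^k\|_2\leq\|\widehat{\mathbf{x}}_i^k-\mathbf{x}_i^k\|_2+\|\nabla\tilde f_i(\mathbf{x}_i^k;\mathbf{x}_{\mathcal{N}_i}^k)-\nabla\tilde f_i(\widehat{\mathbf{x}}_i^k;\mathbf{x}_{\mathcal{N}_i}^k)\|_2.
\end{equation*}
Because Assumption C supplies no Lipschitz \emph{upper} bound for $\nabla\tilde f_i$ in its first argument, I would insert the hybrid value $\nabla\tilde f_i(\widehat{\mathbf{x}}_i^k;(\widehat{\mathbf{x}}_i^k,\mathbf{x}_{\mathcal{N}_i\setminus\{i\}}^k))$: C2 converts the first resulting pair into a difference of $\nabla_{\mathbf{x}_i}f_i$ bounded by $L\,\|\widehat{\mathbf{x}}_i^k-\mathbf{x}_i^k\|_2$ (A2, on $\bar{\mathcal{L}}^0$ under A2(a)), while C3 bounds the second by $L_i\,\|\widehat{\mathbf{x}}_i^k-\mathbf{x}_i^k\|_2\leq L_m\,\|\widehat{\mathbf{x}}_i^k-\mathbf{x}_i^k\|_2$. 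The triangle inequality $\|\mathbf{x}^k-\mathbf{p}^k\|_2\leq\|\mathbf{x}^k-\widehat{\mathbf{x}}^k\|_2+\|\widehat{\mathbf{x}}^k-\mathbf{p}^k\|_2$ and summation of the block bounds across the $N$ agents then deliver the final constant $(1+L+NL_m)$. The non-obvious ingredient--and the step most likely to trip up a careless write-up--is the hybrid-vector trick, which is forced on us because C1 gives only a one-sided (strong convexity) handle on $\tilde f_i(\cdot;\mathbf{y})$; without reshuffling through C2 into $\nabla_{\mathbf{x}_i}f_i$, where A2 does supply a Lipschitz constant, the first-argument difference cannot be controlled.
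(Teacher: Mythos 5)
Your proposal is correct and follows essentially the same route as the paper: the paper proves only part (d), deferring (a)--(c) to the analogous arguments in \cite{FLEXA}, and its proof of (d) is exactly your scheme of summing the variational inequality of the proximal point against that of the synchronous best response $\widehat{\mathbf{x}}(\bar{\mathbf{x}}^k)$ and then invoking A2, C2, and C3 --- the ``hybrid vector'' insertion you single out is precisely what is compressed into the paper's terse ``$\stackrel{A2,\,C2\text{--}C3}{\leq}$'' step to control the first-argument difference of $\nabla\tilde f_i$. Your detour through the blockwise bound on $\|\widehat{\mathbf{x}}_i^k-\mathbf{p}_i^k\|_2$ followed by the triangle inequality is only a cosmetic rearrangement of the paper's direct bound on $\|\check{\mathbf{x}}^k-\mathbf{x}^k\|_2$ (and, if anything, yields a slightly different but equally valid constant).
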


	\begin{proof} We prove only (d); the proof of
		(a)-(c) follows similar steps of that in  \cite[Proposition 8]{FLEXA}, and thus is omitted. 		 Invoking the optimality of  $\widehat{\mathbf{x}}(\bar{\mathbf{x}}^k)$, we have
		\begin{align*}&\Bigg\langle\nabla\tilde{f}_i\left(\widehat{\mathbf{x}}_i(\bar{\mathbf{x}}^k(i));\mathbf{x}^k_{\mathcal{N}_i}\right)+\sum\limits_{j\in\mathcal{N}_i\backslash\{i\}}\nabla_{\mathbf{x}_i}f_j\left(\mathbf{x}^k_{\mathcal{N}_j}\right),\\&\widehat{\mathbf{x}}_i(\bar{\mathbf{x}}^k(i))-\mathbf{z}_i\Bigg\rangle+g_i(\widehat{\mathbf{x}}_i(\bar{\mathbf{x}}^k(i)))-g_i(\mathbf{z}_i)\leq0,\end{align*}
		for all $\mathbf{z}\in\mathcal{X}$ and   $i\in\mathcal{N}$. Setting $\check{\mathbf{x}}^k\triangleq\texttt{prox}_G\left(\mathbf{x}^k-\nabla F(\mathbf{x}^k)\right)$, and invoking the variational characterization of the proximal operator, we have  		\begin{equation*}\left\langle\nabla_{\mathbf{x}}F(\mathbf{x}^k)+\check{\mathbf{x}}^k-\mathbf{x}^k,\check{\mathbf{x}}^k-\mathbf{w}\right\rangle+G(\check{\mathbf{x}}^k)-G(\mathbf{w})\leq0,\end{equation*}
	for all	$\mathbf{w}\in\mathcal{X}$.
		Summing the two  inequalities  above, with $\mathbf{z}=\check{\mathbf{x}}^k$, $\mathbf{w}=\widehat{\mathbf{x}}(\bar{\mathbf{x}}^k)$, and   using C1 and C2, yields \begin{align}\nonumber&\tau\|\widehat{\mathbf{x}}(\bar{\mathbf{x}}^k)-\mathbf{x}^k\|_2^2+\|\check{\mathbf{x}}^k-\mathbf{x}^k\|_2^2\\
		\nonumber&\leq \|\check{\mathbf{x}}^k-\mathbf{x}^k\|_2\|\widehat{\mathbf{x}}(\bar{\mathbf{x}}^k(i))-\mathbf{x}^k\|_2+\sum\limits_{i=1}^N\Big\langle\nabla\tilde{f}_i(\widehat{\mathbf{x}}_i(\bar{\mathbf{x}}^k(i));\mathbf{x}^k_{\mathcal{N}_i})\\
		\nonumber&\quad -\nabla\tilde{f}_i(\mathbf{x}_i^k;\mathbf{x}^k_{\mathcal{N}_i}),\check{\mathbf{x}}_i^k-\mathbf{x}_i^k\Big\rangle\\
		\nonumber&\stackrel{A2, C2-C3}{\leq} \|\mathbf{x}^k-\check{\mathbf{x}}^k\|_2\left((1+L+NL_m)\|\widehat{\mathbf{x}}(\bar{\mathbf{x}}^k)-\mathbf{x}^k\|_2\right).\end{align}
	\end{proof}\vspace{-0.3cm}
\subsection{Proof of Theorem \ref{compl}}\label{proof_general}
 The proof is organized in the following steps:

\noindent \textbf{Step 1--Lyapunov function \& its descent:} We define an appropriate Lyapunov  function $\tilde{V}$ and   prove that it is monotonically nonincreasing along the iterations. This also proves Theorem \ref{compl}(c);

\noindent  \textbf{Step 2--Vanishing} $\mathbf{x}$\textbf{-stationarity:} Building on the descent properties of the Lyapunov function, we prove   $\lim_{k\rightarrow+\infty}\|\widehat{\mathbf{x}}(\bar{\mathbf{x}}^k)-\mathbf{x}^k\|_2=0$ [Theorem \ref{compl}(a)];

\noindent\textbf{Step 3--Convergence rate:} We prove the sublinear convergence rate of $\{M_V(\mathbf{x}^k)\}$ as stated in Theorem \ref{compl}(c). 

\noindent The above steps are proved under Assumptions A, C, and D.

\subsubsection{\textbf{Step 1--Lyapunov function \& its descent}}\label{step1}
Introduce the following Lyapunov-like  function:
\begin{align}
\nonumber&\tilde{V}(\mathbf{x}^k,\dots,\mathbf{x}^{k-D})
&\triangleq V(\mathbf{x}^k)+\frac{DL\rho^2}{2}\Bigg(\sum\limits_{l=k-D}^{k-1}\big(l-(k-1)
\\&&+D\big)\|\mathbf{x}^{l+1}-\mathbf{x}^l\|_2^2\Bigg),\label{lyap}
\end{align}
where $L$ is defined in Sec.\ref{notation}.
Note that  $$\tilde{V}^\star\triangleq\underset{[\mathbf{y}^i\in\mathcal{X}]_{i=1}^{D+1}}{\min}\,\tilde{V}(\mathbf{y}^1,\ldots,\mathbf{y}^{D+1})=\underset{\mathbf{x}\in\mathcal{X}}{\min}\,V(\mathbf{x}).$$

The following lemma establishes the descent properties of $\tilde{V}$
and also proves Theorem \ref{compl}(c). 	
\begin{lemma}\label{descent}
	Given $\tilde{V}$ defined in \eqref{lyap}, the following hold:\\\vspace{-0.1cm}
	(a) For any $k\geq0$:
	\begin{align}\label{eq:final0_multi}&\tilde{V}(\mathbf{x}^{k+1}\ldots,\mathbf{x}^{k+1-D})\\\nonumber&\leq \tilde{V}(\mathbf{x}^k,\ldots,\mathbf{x}^{k-D})-\gamma\left(\tau-\gamma\frac{L\left(2+D^2\rho^2\right)}{2}\right)
	\|\Delta\widehat{\mathbf{x}}_{i^k}^k\|_2^2.\end{align}
	(b) If, in particular,  A2(a) is satisfied: $\mathbf{x}^k\in \mathcal{L}^0, \,\text{for all}\, k\geq 0.$
\end{lemma}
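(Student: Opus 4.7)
The plan is to prove (a) by carefully combining a descent estimate on $V$ with the telescoping of the auxiliary term in $\tilde V$, and then to obtain (b) by an inductive argument that uses (a) to keep every iterate in $\mathcal{L}^0$.

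For (a), I would first apply the descent lemma to $F$, which under A2(a) requires the gradients $\nabla_{\mathbf{x}_j}f_i$ to be $L$-Lipschitz on a convex set containing $\mathbf{x}^k$ and $\mathbf{x}^{k+1}$; this is exactly what $\bar{\mathcal{L}}^0$ provides, see Remark \ref{remark_on_L}. Since only block $i^k$ is updated and $\mathbf{x}_{i^k}^{k+1}-\mathbf{x}_{i^k}^{k}=\gamma\Delta\widehat{\mathbf{x}}_{i^k}^k$, one gets
\[
V(\mathbf{x}^{k+1})\le V(\mathbf{x}^k)+\gamma\langle\nabla_{\mathbf{x}_{i^k}}F(\mathbf{x}^k),\Delta\widehat{\mathbf{x}}_{i^k}^k\rangle+\gamma(g_{i^k}(\widehat{\mathbf{x}}^k_{i^k})-g_{i^k}(\mathbf{x}^k_{i^k}))+\tfrac{\gamma^{2}L}{2}\|\Delta\widehat{\mathbf{x}}_{i^k}^k\|_2^{2},
\]
where convexity of $g_{i^k}$ is used for the nonsmooth part. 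Next I would invoke Proposition \ref{Prop_best_response}(a) to replace $\langle\nabla_{\mathbf{x}_{i^k}}f_j(\mathbf{x}^k_{\mathcal{N}_j}(i^k,j)),\Delta\widehat{\mathbf{x}}_{i^k}^k\rangle$ by a $-\tau\|\Delta\widehat{\mathbf{x}}_{i^k}^k\|_2^2$ term, incurring the ``delay residuals''
\[
\sum_{j\in\mathcal{N}_{i^k}}\langle\nabla_{\mathbf{x}_{i^k}}f_j(\mathbf{x}^k_{\mathcal{N}_j})-\nabla_{\mathbf{x}_{i^k}}f_j(\mathbf{x}^k_{\mathcal{N}_j}(i^k,j)),\Delta\widehat{\mathbf{x}}_{i^k}^k\rangle,
\]
bounded via Cauchy–Schwarz and $L$-Lipschitzness by $L\sum_{j\in\mathcal{N}_{i^k}}\|\mathbf{x}^k_{\mathcal{N}_j}-\mathbf{x}^k_{\mathcal{N}_j}(i^k,j)\|_2\|\Delta\widehat{\mathbf{x}}_{i^k}^k\|_2$.

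The crucial step is to split these residuals with Young's inequality using the weight $\rho/\gamma$, which yields $\tfrac{L\rho}{2}\|\mathbf{x}^k_{\mathcal{N}_j}-\mathbf{x}^k_{\mathcal{N}_j}(i^k,j)\|_2^{2}+\tfrac{\gamma L}{2\rho}\|\Delta\widehat{\mathbf{x}}_{i^k}^k\|_2^{2}$; summing over the $\rho$-or-fewer indices in $\mathcal{N}_{i^k}$ produces exactly an additional $\tfrac{\gamma^{2}L}{2}\|\Delta\widehat{\mathbf{x}}_{i^k}^k\|_2^{2}$ plus a delay-squared term. Using \eqref{x_tilde}, $|\mathcal{K}^k(i^k,j)|\le D$, and the fact that $\mathbf{x}^{l+1}-\mathbf{x}^l$ has a single nonzero block, a Cauchy–Schwarz bound gives $\|\mathbf{x}^k_{\mathcal{N}_j}-\mathbf{x}^k_{\mathcal{N}_j}(i^k,j)\|_2^{2}\le D\sum_{l=k-D}^{k-1}\|\mathbf{x}^{l+1}-\mathbf{x}^l\|_2^{2}$; the delay-squared contribution is therefore at most $\tfrac{L\rho^{2}D}{2}\sum_{l=k-D}^{k-1}\|\mathbf{x}^{l+1}-\mathbf{x}^l\|_2^{2}$. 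A direct computation of the increment of the auxiliary part of $\tilde V$ shows
\[
\tilde V(\mathbf{x}^{k+1},\ldots)-\tilde V(\mathbf{x}^{k},\ldots)-(V(\mathbf{x}^{k+1})-V(\mathbf{x}^{k}))=\tfrac{\gamma^{2}D^{2}L\rho^{2}}{2}\|\Delta\widehat{\mathbf{x}}_{i^k}^{k}\|_2^{2}-\tfrac{DL\rho^{2}}{2}\sum_{l=k-D}^{k-1}\|\mathbf{x}^{l+1}-\mathbf{x}^{l}\|_2^{2},
\]
so the negative telescoping term exactly cancels the delay-squared residual, and what remains gives the claimed coefficient $-\gamma(\tau-\gamma L(2+D^{2}\rho^{2})/2)$.

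For (b), I would argue by induction on $k$: $\mathbf{x}^{0}\in\mathcal{L}^{0}$ by definition, and the auxiliary part of $\tilde V$ vanishes at $k=0$ because $\mathbf{x}^{t}=\mathbf{x}^{0}$ for $t<0$. Assuming $\mathbf{x}^{l}\in\mathcal{L}^{0}\subseteq\texttt{Cube}$ for all $l\le k$, every delayed vector $\mathbf{x}^{k}(i,j)$ lies in $\texttt{Cube}\subseteq\bar{\mathcal{L}}^{0}$, and by \eqref{psi} the best response $\widehat{\mathbf{x}}^{k}_{i^{k}}$ is within $\psi$ of $\mathbf{x}^{k}_{i^{k}}$, so $\mathbf{x}^{k+1}\in\bar{\mathcal{L}}^{0}$; thus the descent lemma of part (a) is applicable. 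The stepsize hypothesis $\gamma<2\tau/(L(2+\rho^{2}D^{2}))$ makes the coefficient in \eqref{eq:final0_multi} nonpositive, hence $\tilde V(\mathbf{x}^{k+1},\ldots)\le\tilde V(\mathbf{x}^{0},\ldots)=V(\mathbf{x}^{0})$, and since the auxiliary part of $\tilde V$ is nonnegative, $V(\mathbf{x}^{k+1})\le V(\mathbf{x}^{0})$; feasibility $\mathbf{x}^{k+1}\in\mathcal{X}$ follows from $\gamma\in(0,1]$ and convexity of $\mathcal{X}_{i^{k}}$. Therefore $\mathbf{x}^{k+1}\in\mathcal{L}^{0}$, closing the induction.

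The main obstacle will be calibrating the Young's inequality parameter so that the $\|\Delta\widehat{\mathbf{x}}_{i^k}^k\|_2^{2}$ coefficient produced by the delay residuals exactly matches the $\tfrac{\gamma^{2}L}{2}$ slot missing from the target inequality, while the delay-squared coefficient ends up being precisely $\tfrac{DL\rho^{2}}{2}$ so that it annihilates the negative telescope coming from $T_{k+1}-T_{k}$; the choice $\rho/\gamma$ is what makes both constraints hold simultaneously. A secondary subtlety, under A2(a), is ensuring that the Lipschitz constant $L$ can legitimately be used both for $\mathbf{x}^{k},\mathbf{x}^{k+1}$ (via $\bar{\mathcal{L}}^{0}$) and for the delayed evaluations $\mathbf{x}^{k}(i,j)$ (via $\texttt{Cube}$), and this is why the inductive loop in (b) must be organized before invoking (a).
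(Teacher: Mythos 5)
Your proposal is correct and follows essentially the same route as the paper: descent lemma plus the best-response optimality inequality, Young's inequality calibrated with the weight $\rho/\gamma$ so the residual contributes exactly $\tfrac{\gamma^2 L}{2}\|\Delta\widehat{\mathbf{x}}_{i^k}^k\|_2^2$, the telescoping identity for the weighted sum in $\tilde V$ that cancels the delay-squared term, and a joint induction that keeps $\mathbf{x}^k,\mathbf{x}^{k+1}\in\bar{\mathcal{L}}^0$ so the Lipschitz constant $L$ is legitimately usable under A2(a). (The only blemish is a typo in the intermediate Young's bound, which should read $\tfrac{\gamma^2 L}{2\rho}\|\Delta\widehat{\mathbf{x}}_{i^k}^k\|_2^2$ per neighbor; your summed total $\tfrac{\gamma^2 L}{2}$ is correct.)
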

\begin{proof}
	We prove the two statements by induction.
	For $k=0$, \vspace{-0.2cm}
	\begin{align}
	\hspace{-0.2cm}	\nonumber
		 V(\mathbf{x}^1)& = 
		  \sum\limits_{i=1}^Nf_i(\mathbf{x}^1_{\mathcal{N}_i}) + g_{i^0}(\mathbf{x}_{i^0}^1) + \sum\limits_{i \neq i^0} g_i(\mathbf{x}_i^1) \\\nonumber
		&\stackrel{\text{\eqref{update}}}{=} \sum\limits_{i=1}^Nf_i(\mathbf{x}^1_{\mathcal{N}_i}) + g_{i^0}(\mathbf{x}_{i^0}^1) + \sum\limits_{i \neq i^0} g_i(\mathbf{x}_i^0)   \\
		\nonumber
		&\stackrel{\text{(a)}}{\leq} \sum\limits_{i=1}^Nf_i(\mathbf{x}^0_{\mathcal{N}_i})  +\gamma \sum\limits_{j\in\mathcal{N}_{i^0}}\bigg\langle\nabla_{\mathbf{x}_{i^0}} f_j (\mathbf{x}^0_{\mathcal{N}_j})\\
		\nonumber&\quad + \nabla_{\mathbf{x}_{i^0}}
		f_j\left(\mathbf{x}^0_{\mathcal{N}_j}(i^0,j)\right)- \nabla_{\mathbf{x}_{i^0}}
		f_j\left(\mathbf{x}^0_{\mathcal{N}_j}(i^0,j)\right),\Delta\widehat{\mathbf{x}}_{i^0}^0\bigg\rangle \\
		\nonumber&\quad+ \frac{\gamma^2L}{2}\|\Delta\widehat{\mathbf{x}}_{i^0}^0\|_2^2+g_{i^0}(\mathbf{x}_{i^0}^1)+\sum\limits_{i \neq i^0} g_i(\mathbf{x}_i^0)\\
		\nonumber
		&
		\stackrel{\text{A3}}{\leq} \sum\limits_{i=1}^Nf_i(\mathbf{x}^0_{\mathcal{N}_i}) + \gamma\left\langle\sum\limits_{j\in\mathcal{N}_{i^0}}\hspace{-.15cm}\nabla_{\mathbf{x}_{i^0}}
		f_j\left(\mathbf{x}^0_{\mathcal{N}_j}(i^0,j)\right),\Delta\widehat{\mathbf{x}}_{i^0}^0\hspace{-.1cm}\right\rangle\\
		\nonumber&\quad+\gamma\Big\langle\sum\limits_{j\in\mathcal{N}_{i^0}}\Big(\nabla_{\mathbf{x}_{i^0}}
		f_j(\mathbf{x}^0_{\mathcal{N}_j})\\
		\nonumber&\quad-\nabla_{\mathbf{x}_{i^0}}f_j\left(\mathbf{x}^0_{\mathcal{N}_j}(i^0,j)\right)\Big),\Delta\widehat{\mathbf{x}}_{i^0}^0\Big\rangle+\frac{\gamma^2L}{2}\|\Delta\widehat{\mathbf{x}}_{i^0}^0\|_2^2\\
		\nonumber&\quad+ \sum\limits_{i=1}^N g_i(\mathbf{x}_i^0)+\gamma g_{i^0}(\widehat{\mathbf{x}}_{i^0}^0)-\gamma g_{i^0}(\mathbf{x}^0_{i^0})\\
		\nonumber &
		\stackrel{\eqref{comments}, A2}{\leq} V(\mathbf{x}^0) -\gamma\left(\tau-\frac{\gamma L}{2}\right)\|\Delta\widehat{\mathbf{x}}_{i^0}^0\|_2^2\\
		\nonumber &\quad+\gamma L
		\|\Delta\widehat{\mathbf{x}}_{i^0}^0\|_2\sum\limits_{j\in\mathcal{N}_{i^0}}\|\mathbf{x}^0-\mathbf{x}^0(i^0,j)\|_2\\
		&\nonumber\stackrel{\text{(b)}}{\leq} V(\mathbf{x}^0)-\gamma\left(\tau-\gamma L\right)\|\Delta\widehat{\mathbf{x}}_{i^0}^0\|_2^2\\
		&\quad+\frac{L\rho}{2}
		\sum\limits_{j\in\mathcal{N}_{i^0}}\underbrace{\|\mathbf{x}^0-\mathbf{x}^0(i^0,j)\|_2^2}_{\texttt{term I}},
		\label{prelyap_multi_02}
 		\end{align}
		where (a) follows from  the descent lemma and the definition of $L$; and in (b) we used  Young's inequality. Note that in (a) we used the fact that  $\mathbf{x}^0$ and $\mathbf{x}^1$ belong to $\bar{\mathcal{L}}^0$ (cf. Remark \ref{remark_on_L}).

We now bound \texttt{term I} in \eqref{prelyap_multi_02}. It is convenient to study the more general term $\|\mathbf{x}^k-\mathbf{x}^k(i^k,j)\|_2^2$, $j\in\mathcal{N}_{i^k}$. There holds:\vspace{-0.2cm}
\begin{align}
&\nonumber\|\mathbf{x}^k-\mathbf{x}^k(i^k,j)\|_2^2
\stackrel{\eqref{x_tilde}}{\leq}\,\left(\sum\limits_{l=k-D}^{k-1}\|\mathbf{x}^{l+1}-\mathbf{x}^l\|_2\right)^2
\\
&\nonumber
\leq D\sum\limits_{l=k-D}^{k-1}\|\mathbf{x}^{l+1}-\mathbf{x}^l\|_2^2\\
\nonumber &=D\Bigg(\sum\limits_{l=k-D}^{k-1}\left(l-(k-1)+D\right)\|\mathbf{x}^{l+1}-\mathbf{x}^l\|_2^2\\
 &\quad-\sum\limits_{l=k+1-D}^{k}(l-k+D)\|\mathbf{x}^{l+1}-\mathbf{x}^l\|_2^2\Bigg)\label{eq:newbound1_multi}\hspace{-.1cm}+
D^2\|\mathbf{x}^{k+1}-\mathbf{x}^k\|^2_2.
\end{align}
Combining  \eqref{prelyap_multi_02} and \eqref{eq:newbound1_multi} one can check that statements (a) and (b) of the lemma hold at $k=0$, that is, $\tilde{V}\left(\mathbf{x}^1,\ldots,\mathbf{x}^0\right)\leq\tilde{V}\left(\mathbf{x}^0,\ldots,\mathbf{x}^0\right)$, and $V(\mathbf{x}^1)\leq\tilde{V}\left(\mathbf{x}^1,\ldots,\mathbf{x}^0\right)\leq\tilde{V}\left(\mathbf{x}^0,\ldots,\mathbf{x}^0\right)=V(\mathbf{x}^0)$, respectively.\newline
	Assume now that the two statements hold at iteration $k$. It is easy to check that
the analogous of \eqref{prelyap_multi_02}
also holds at iteration $k+1$ with the term $\sum\limits_{j\in\mathcal{N}_{i^k}}\|\mathbf{x}^k-\mathbf{x}^k(i^k,j)\|_2$ in the analogous of \texttt{term I} at iteration $k$, majorized using  \eqref{eq:newbound1_multi}. Combining \eqref{prelyap_multi_02} at $k+1$ with \eqref{eq:newbound1_multi} one can check that statement (a) of the lemma holds at $k+1$.	We also get: $V(\mathbf{x}^{k+1})\leq\tilde{V}\left(\mathbf{x}^{k+1},\ldots,\mathbf{x}^{k+1-D}\right)\stackrel{\eqref{prelyap_multi_02}}{\leq}\tilde{V}\left(\mathbf{x}^k,\ldots,\mathbf{x}^{k-D}\right)\leq\tilde{V}\left(\mathbf{x}^0,\ldots,\mathbf{x}^0\right)=V(\mathbf{x}^0)$, which proves statement (b) of the lemma at $k+1$. This completes the proof.\vspace{-0.2cm}
	\end{proof}

\subsubsection{\textbf{Step 2 -- Vanishing} $\mathbf{x}$\textbf{-stationarity}}\label{step2}	
It follows from A4 and Lemma \ref{descent}  that, if $\gamma<\frac{2\tau}{L\left(2+\rho^2D^2\right)}$, $\{\tilde{V}(\mathbf{x}^{k-D},\ldots,\mathbf{x}^{k})\}$  and thus $\{V(\mathbf{x}^k)\}$ converge. Therefore,
\begin{equation}
\lim\limits_{k\rightarrow+\infty}\|\Delta\widehat{\mathbf{x}}_{i^k}^k\|_2=0.
\label{limit}
\end{equation}

The next lemma extends the vanishing properties of a single block $\Delta\widehat{\mathbf{x}}^k_{i^k}$ to the entire vector $\Delta\widehat{\mathbf{x}}^k$.
\begin{lemma}\label{tworesults} For any $i\in\mathcal{N}, k\geq0$, and $h,t\in[k,k+B-1]$, there hold:

	 \vspace{-0.6cm}
		\begin{equation}
	 \|\widehat{\mathbf{x}}_i(\tilde{\mathbf{x}}^t(i))-\widehat{\mathbf{x}}_i(\tilde{\mathbf{x}}^h(i))\|_2^2\leq C_2\sum\limits_{l=k-D}^{k+B-2}\|\Delta\widehat{\mathbf{x}}^l_{i^l}\|_2^2,\label{lip}\vspace{-0.2cm}
		\end{equation}	
	 \vspace{-0.cm} 	\begin{equation}
		 \|\Delta\widehat{\mathbf{x}}^h\|_2^2\leq2\Big(NC_2+1\Big)\sum\limits_{l=k-D}^{k+B-1}\|\Delta\widehat{\mathbf{x}}^l_{i^l}\|_2^2.\label{fourth_bis_a}\vspace{-0.3cm}
		\end{equation}
		 with \begin{equation*}
		C_2\triangleq\frac{3\gamma^2(B+2D-N+1)\rho\left(L_m^2+(\rho-1)L^2\right)}{\tau^2}.
		\end{equation*}
\end{lemma}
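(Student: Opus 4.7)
The two bounds are proved in order, with \eqref{lip} then used as a lemma for \eqref{fourth_bis_a}.

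To obtain \eqref{lip}, I apply the Lipschitz continuity of Proposition \ref{Prop_best_response}(b) to $\widehat{\mathbf{x}}_i(\tilde{\mathbf{x}}^t(i))$ and $\widehat{\mathbf{x}}_i(\tilde{\mathbf{x}}^h(i))$, square the resulting inequality, and use Cauchy--Schwarz across the $\rho=|\mathcal{N}_i|$ summands to extract a prefactor $\rho\bigl(L_m^2+(\rho-1)L^2\bigr)/\tau^2$. The task then reduces to bounding each $\|\mathbf{x}^t(i,j)-\mathbf{x}^h(i,j)\|_2^2$ uniformly in $j\in\mathcal{N}_i$. Using \eqref{x_tilde} I decompose
\begin{equation*}
\mathbf{x}^t(i,j)-\mathbf{x}^h(i,j) = (\mathbf{x}^t-\mathbf{x}^h) + \sum_{l\in\mathcal{K}^t(i,j)}(\mathbf{x}^l-\mathbf{x}^{l+1}) - \sum_{l\in\mathcal{K}^h(i,j)}(\mathbf{x}^l-\mathbf{x}^{l+1}),
\end{equation*}
apply $(a+b+c)^2\leq 3(a^2+b^2+c^2)$, and bound each piece by Cauchy--Schwarz. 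Since a single block is updated per iteration, $\|\mathbf{x}^{l+1}-\mathbf{x}^l\|_2=\gamma\|\Delta\widehat{\mathbf{x}}^l_{i^l}\|_2$; the two $\mathcal{K}$--sums contribute at most $D$ indices each (Assumption D2), while $\|\mathbf{x}^t-\mathbf{x}^h\|_2^2$ splits blockwise and Assumption D1 caps the number of updates of each block in the window $[\min(t,h),\max(t,h)-1]$ at $B-N+1$. All relevant indices $l$ lie in $[k-D,k+B-2]$. Collecting terms produces exactly $C_2=3\gamma^2(B+2D-N+1)\rho\bigl(L_m^2+(\rho-1)L^2\bigr)/\tau^2$.

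To obtain \eqref{fourth_bis_a}, I write $\|\Delta\widehat{\mathbf{x}}^h\|_2^2=\sum_{i=1}^N\|\Delta\widehat{\mathbf{x}}^h_i\|_2^2$ and choose, for each $i$, an index $t_i\in[k,k+B-1]$ with $i^{t_i}=i$ (guaranteed by D1) as follows: $t_i$ is the last update of block $i$ in $[k,h]$ if one exists, otherwise the first update of $i$ in $[h,k+B-1]$. A direct computation from the update rule \eqref{update} then gives, in either case, $\|\widehat{\mathbf{x}}^{t_i}_i-\mathbf{x}^h_i\|_2\leq\|\Delta\widehat{\mathbf{x}}^{t_i}_{i^{t_i}}\|_2$ (in the first case one uses that between $t_i+1$ and $h-1$ block $i$ is never touched, so $\widehat{\mathbf{x}}^{t_i}_i-\mathbf{x}^h_i=(1-\gamma)\Delta\widehat{\mathbf{x}}^{t_i}_{i^{t_i}}$; in the second case $\mathbf{x}^{t_i}_i=\mathbf{x}^h_i$). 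The decomposition
\begin{equation*}
\Delta\widehat{\mathbf{x}}^h_i = \bigl(\widehat{\mathbf{x}}^h_i-\widehat{\mathbf{x}}^{t_i}_i\bigr) + \bigl(\widehat{\mathbf{x}}^{t_i}_i-\mathbf{x}^h_i\bigr),
\end{equation*}
combined with $(a+b)^2\leq 2(a^2+b^2)$ and \eqref{lip} applied to the first summand, gives $\|\Delta\widehat{\mathbf{x}}^h_i\|_2^2\leq 2C_2\sum_{l=k-D}^{k+B-2}\|\Delta\widehat{\mathbf{x}}^l_{i^l}\|_2^2+2\|\Delta\widehat{\mathbf{x}}^{t_i}_{i^{t_i}}\|_2^2$. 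Summing over $i$ and exploiting the distinctness of the $t_i$'s to bound $\sum_i\|\Delta\widehat{\mathbf{x}}^{t_i}_{i^{t_i}}\|_2^2\leq\sum_{l=k}^{k+B-1}\|\Delta\widehat{\mathbf{x}}^l_{i^l}\|_2^2$ yields the desired $2(NC_2+1)\sum_{l=k-D}^{k+B-1}\|\Delta\widehat{\mathbf{x}}^l_{i^l}\|_2^2$.

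The main obstacle is the bookkeeping in (a) needed to pin down the sharp prefactor $B+2D-N+1$: the loose argument would yield $B+2D$, and the improvement requires splitting $\|\mathbf{x}^t-\mathbf{x}^h\|_2^2$ blockwise and invoking D1 to limit, per block, the number of updates in a window of length $B-1$. A secondary subtlety in (b) is verifying that the case split always produces a valid $t_i\in[k,k+B-1]$ so that each anchor $\|\Delta\widehat{\mathbf{x}}^{t_i}_{i^{t_i}}\|_2^2$ appears in the target sum with the correct index range.
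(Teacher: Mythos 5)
Your proposal is correct and follows essentially the same route as the paper: part (i) squares the Lipschitz bound of Proposition \ref{Prop_best_response}(b), decomposes $\mathbf{x}^t(i,j)-\mathbf{x}^h(i,j)$ via \eqref{x_tilde} into three pieces bounded by Cauchy--Schwarz (with the blockwise use of D1 yielding the sharp $B-N+1$), and part (ii) anchors $\Delta\widehat{\mathbf{x}}_i^h$ at a nearby update time of block $i$ exactly as the paper does with $r_i^{h,k}$. In fact your case analysis showing $\|\widehat{\mathbf{x}}_i^{t_i}-\mathbf{x}_i^h\|_2\leq\|\Delta\widehat{\mathbf{x}}_i^{t_i}\|_2$ supplies a justification the paper leaves implicit when it replaces $\widehat{\mathbf{x}}_i^{r_i^{h,k}}-\mathbf{x}_i^h$ by $\Delta\widehat{\mathbf{x}}_i^{r_i^{h,k}}$.
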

\begin{proof}
	See Section \ref{suppl}.\vspace{-0.1cm}
\end{proof}

Using \eqref{fourth_bis_a} and \eqref{limit} yields\begin{equation}\lim\limits_{k\rightarrow+\infty}\|\Delta\widehat{\mathbf{x}}^k\|_2=0.\label{new}\end{equation} Furthermore, invoking   \eqref{limit}, \eqref{lip}, and \eqref{new} together with  $\|\widehat{\mathbf{x}}(\bar{\mathbf{x}}^k)-\mathbf{x}^k\|_2\leq\|\Delta\widehat{\mathbf{x}}^k\|_2+\|\widehat{\mathbf{x}}(\bar{\mathbf{x}}^k)-\widehat{\mathbf{x}}^k\|_2$, leads to
\begin{equation}
\lim\limits_{k\rightarrow+\infty}\|\widehat{\mathbf{x}}(\bar{\mathbf{x}}^k)-\mathbf{x}^k\|_2=0,
\label{limit2}
\end{equation}
which, together  with  Proposition \ref{Prop_best_response}(c), proves Theorem \ref{compl}(a).
\subsubsection{\textbf{Step 3 -- Convergence rate}}\label{step3}
We use the Lyapunov function $\tilde{V}$ to study the vanishing rate of $\{M_V(\mathbf{x}^k)\}$. Due to \eqref{limit2} and the definition of $M_V$, we know that $M_V$ is converging to 0. Therefore $T_{\epsilon}$ is finite. Using   $M_V(\mathbf{x}^k)>\epsilon$, for all $k\in\{0,\ldots,T_\epsilon-1\}$, we have
\begin{align}
\nonumber T_\epsilon\epsilon &\leq\sum\limits_{k=0}^{T_\epsilon-1}M_V(\mathbf{x}^k)
 \leq2\sum\limits_{k=0}^{T_\epsilon-1}\left(\|\Delta\widehat{\mathbf{x}}^k\|_2^2+\|\widehat{\mathbf{x}}(\bar{\mathbf{x}}^k)-\widehat{\mathbf{x}}^k\|_2^2\right)\\
\nonumber&\stackrel{\eqref{lipsch},\eqref{fourth_bis_a}}{\leq}2\sum\limits_{k=0}^{T_\epsilon-1}\Bigg(2\left(NC_2+1\right)\sum\limits_{l=k-D}^{k+B-1}\|\Delta\widehat{\mathbf{x}}_{i^l}^l\|_2^2\\
\nonumber&\quad+\sum\limits_{i=1}^N\Bigg(\frac{L_m^2\rho}{\tau^2}\|\mathbf{x}^k(i,i)-\mathbf{x}^k\|_2^2\\
\nonumber&\quad+\frac{L^2\rho}{\tau^2}\sum\limits_{j\in\mathcal{N}_i\backslash\{i\}}\|\mathbf{x}^k(i,j)-\mathbf{x}^k\|_2^2\Bigg)\Bigg)\\
\nonumber&\stackrel{\eqref{x_tilde}}{\leq}2\left(2\left(NC_2+1\right)+\frac{DC_2}{3(B+2D-N+1)}\right)\\
\nonumber& \quad \cdot \sum\limits_{k=0}^{T_\epsilon-1}\sum\limits_{l=k-D}^{k+B-1}\|\Delta\widehat{\mathbf{x}}^l_{i^l}\|_2^2\\
\nonumber&\stackrel{(a)}{\leq}C_3\sum\limits_{k=0}^{T_\epsilon-1}\sum\limits_{l=k-D}^{k+B-1}\Bigg(\tilde{V}\left(\mathbf{x}^l,\ldots,\mathbf{x}^{l-D}\right)\\
\nonumber&\qquad -\tilde{V}\left(\mathbf{x}^{l+1},\ldots,\mathbf{x}^{l+1-D}\right)\Bigg)\\
\nonumber&=C_3\sum\limits_{k=0}^{T_\epsilon-1}\Bigg(\tilde{V}\left(\mathbf{x}^{k-D},\ldots,\mathbf{x}^{k-2D}\right)\\
\nonumber&\qquad -\tilde{V}\left(\mathbf{x}^{k+B},\ldots,\mathbf{x}^{k+B-D}\right)\Bigg)\\
&\leq C_3(B+D-1)\left(V(\mathbf{x}^0)-\underset{\mathbf{x}\in\mathcal{X}}{\min}\,V(\mathbf{x})\right),
\end{align}
where in (a) we used \eqref{eq:final0_multi} and  defined $C_3$ as \vspace{-0.2cm}$$C_3\triangleq\frac{4\left(2\left(NC_2+1\right)+\frac{DC_2}{3(B+2D-N+1)}\right)}{\gamma\left(2\tau-\gamma L\left(2+D^2\rho^2\right)\right)}.$$
Statement (b) of the theorem   follows readily by defining
\begin{align}
&C_1\triangleq C_3(B+D-1).\label{C1}
\end{align}
\subsection{Proof of Theorem \ref{linear}}\label{proof_linear}
We study now convergence of  Algorithm~\ref{algorithm1} under the additional Assumption B.

First of all, note that one can always find $\eta, \epsilon, \kappa>0$ such that B1 holds. In fact, i) by Lemma \ref{descent}, there exist some $\eta$ and sufficiently small $\gamma/\tau$ such that $V(\mathbf{x}^k)\leq \eta$, for all $k\geq0$; and ii) since $\|\mathbf{x}^k-\texttt{prox}_G\left(\nabla F(\mathbf{x}^k)-\mathbf{x}^k\right)\|_2$ is asymptotically vanishing [Proposition \ref{Prop_best_response}(d) and \eqref{limit2}], one can always find some   $\epsilon>0$ such that $\|\mathbf{x}^k-\texttt{prox}_G\left(\nabla F(\mathbf{x}^k)-\mathbf{x}^k\right)\|_2\leq \epsilon$, for all $k\geq0$.

The proof proceeds along the following steps. \textbf{Step 1:} We first show that the liminf of $\left\{V(\mathbf{x}^k)\right\}$ is a stationary point  $V^\star$, see \eqref{liminf}. \textbf{Step 2} shows that $\left\{V(\mathbf{x}^k)\right\}$ approaches $V^\star$ linearly, up to an error of the order $\mathcal{O}\left(\sum\limits_{l=k-D}^{k+B-1}\|\Delta\widehat{\mathbf{x}}^l_{i_l}\|_2^2\right)$, see \eqref{tseng}. Finally, in \textbf{Step 3} we show that the term $\sum\limits_{l=k-D}^{k+B-1}\|\Delta\widehat{\mathbf{x}}^l_{i_l}\|_2^2$ is overall vanishing at a geometric rate, implying the convergence of $\left\{V(\mathbf{x}^k)\right\}$ to $V^\star$ at a geometric rate.

\subsubsection{\textbf{Step 1}}Pick any vector $\mathbf{x}^\star(\mathbf{x}^k)\in P_{\mathcal{X}^\star}(\mathbf{x}^k)$, where $P_{\mathcal{X}^\star}(\mathbf{x})\triangleq  {\arg\min}_{\mathbf{x}^\star\in\mathcal{X}^\star}\|\mathbf{x}-\mathbf{x}^\star\|_2$, $\mathbf{x}\in\mathbb{R}^n$. Note that:
\begin{equation}d(\mathbf{x}^k\hspace{-.1cm},\mathcal{X}^\star)\hspace{-.1cm}=\hspace{-.1cm}\|\mathbf{x}^\star(\mathbf{x}^k)-\mathbf{x}^k\|_2\hspace{-.05cm}\stackrel{B1}{\leq}\hspace{-.1cm}\kappa\|\mathbf{x}^k-\texttt{prox}_G\hspace{-.1cm}\left(\nabla_{\mathbf{x}}F(\mathbf{x}^k)\hspace{-.1cm}-\hspace{-.1cm}\mathbf{x}^k\right)\hspace{-.1cm}\|_2.\label{errbdnew}\end{equation}
Using \eqref{errbdnew}, \eqref{limit2}, and \eqref{lemma3}, yields
\begin{equation}
\underset{k\rightarrow+\infty}{\lim}\,\|\mathbf{x}^\star(\mathbf{x}^k)-\mathbf{x}^\star(\mathbf{x}^{k+1})\|=0.
\end{equation}
This, together with B2, imply that there exists an index $\bar{k}\geq0$ and a scalar $V^\star$ such that
\begin{equation}
V(\mathbf{x}^\star(\mathbf{x}^k))=V^\star,\quad\forall\,k\geq\bar{k}.\label{kbar_a}
\end{equation}
By the Mean Value Theorem, there exists a vector $\boldsymbol{\xi}^k=\beta^k\mathbf{x}^\star(\mathbf{x}^k)+(1-\beta^k)\mathbf{x}^k$, for some $\beta^k\in(0;1)$, such that, for any $k\geq\bar{k}$,
\begin{align}
\nonumber&V^\star-V(\mathbf{x}^k)=\left\langle\nabla_{\mathbf{x}} F(\boldsymbol{\xi}^k),\mathbf{x}^\star(\mathbf{x}^k)-\mathbf{x}^k\right\rangle+G(\mathbf{x}^\star(\mathbf{x}^k))\\
\nonumber&-G(\mathbf{x}^k)\leq\left\langle\nabla_{\mathbf{x}} F(\boldsymbol{\xi}^k)-\nabla_{\mathbf{x}} F(\mathbf{x}^\star(\mathbf{x}^k)),\mathbf{x}^\star(\mathbf{x}^k)-\mathbf{x}^k\right\rangle\\
\nonumber&\stackrel{(a)}{\leq}\frac{N(\rho^2L^2+1)}{2}\|\mathbf{x}^\star(\mathbf{x}^k)-\mathbf{x}^k\|_2^2\\
&\stackrel{\eqref{lemma3}, \eqref{errbdnew} }{\leq}\frac{N\kappa(\rho^2L^2+1)(1+L+NL_m)}{2}\|\widehat{\mathbf{x}}(\bar{\mathbf{x}})^k-\mathbf{x}^k\|_2,\label{newstr2}
\end{align}	
where (a) follows from A2 and $\|\boldsymbol{\xi}^k-\mathbf{x}^\star(\mathbf{x}^k)\|_2^2
= \|\beta^k \bx^\star(\mathbf{x}^k) + (1-\beta^k) \bx^k-\mathbf{x}^\star(\mathbf{x}^k)\|_2^2
\leq \|\mathbf{x}^\star(\mathbf{x}^k)-\bx^k\|_2^2.$

By invoking \eqref{newstr2}, together with \eqref{limit2}, we obtain
\begin{equation}
\underset{k\rightarrow+\infty}{\lim\inf}\,V(\mathbf{x}^k)\geq V^\star.\label{liminf}
\end{equation}  	

\subsubsection{\textbf{Step 2}}We next show that $V(\mathbf{x}^k)$ approaches $V^\star$ at a linear rate.\\

To this end, consider \eqref{prelyap_multi_02} with $0$ and $1$ replaced by $k$ and $k+1$ respectively; we have the following:
\begin{align}
	&\nonumber V(\mathbf{x}^{k+1})\leq V(\mathbf{x}^k)-\gamma\left(\tau-\gamma L\right)\|\Delta\widehat{\mathbf{x}}_{i^k}\|_2^2\\
	\nonumber&+\frac{L\rho}{2}\sum\limits_{j\in\mathcal{N}_{i^k}}\|\mathbf{x}^k-\mathbf{x}^k(i^k,j)\|_2^2\stackrel{\eqref{x_tilde}}{\leq}V(\mathbf{x}^k)\\&-\gamma\left(\tau-\gamma L\right)\|\Delta\widehat{\mathbf{x}}_{i^k}\|_2^2+\frac{\gamma^2DL\rho^2}{2}\sum\limits_{l=k-D}^{k-1}\|\Delta\widehat{\mathbf{x}}_{i^l}^l\|_2^2\label{temp}.
\end{align}

Is easy to see that, for any $k\geq\bar{k}$, \eqref{temp} implies:
\begin{align}
\nonumber&V(\mathbf{x}^{k+B})-V^\star\leq V(\mathbf{x}^k)-V^\star\\
\nonumber&-\gamma\left(\tau-\frac{\gamma L(2+BD\rho^2)}{2}\right)\sum\limits_{l=k}^{k+B-1}\|\Delta\widehat{\mathbf{x}}_{i^l}^l\|_2^2\\
&+\frac{B\gamma^2DL\rho^2}{2}\sum\limits_{l=k-D}^{k-1}\|\Delta\widehat{\mathbf{x}}_{i^l}^l\|_2^2.
\label{descent_bis}
\end{align}

To prove the desired result we will combine next \eqref{descent_bis} with the following lemma.
\begin{lemma}\label{lemmarate}
	For any $k\geq0$, there holds:
	\begin{align}
	\nonumber&V(\mathbf{x}^{k+B})-V(\mathbf{x}^\star(\mathbf{x}^k))\leq(1-\gamma)\left(V(\mathbf{x}^k)-V(\mathbf{x}^\star(\mathbf{x}^k))\right)\\
	\label{rate}&+\gamma\left(N\alpha_1+(B-N)\alpha_2\right)\sum\limits_{l=k-D}^{k+B-1}\|\Delta\widehat{\mathbf{x}}_{i^l}^l\|_2^2,
	\end{align}
	where $\alpha_1$ and $\alpha_2$ are two positive constants defined in Appendix \ref{suppl} [see \eqref{alpha1} and \eqref{alpha2}, respectively].
\end{lemma}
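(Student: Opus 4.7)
\textbf{Proof plan for Lemma \ref{lemmarate}.} Write $\mathbf{x}^\star \triangleq \mathbf{x}^\star(\mathbf{x}^k)$ throughout; this reference is frozen across the $B$-step window. The $(1-\gamma)$ coefficient is the signature of a convexity/averaging argument applied block-by-block, exploiting that $\mathbf{x}_{i^l}^{l+1} = (1-\gamma)\mathbf{x}_{i^l}^l + \gamma\widehat{\mathbf{x}}_{i^l}^l$. The main idea is: inside the window every block is touched at least once (Assumption D1), so if on every such ``first touch'' we can replace $\widehat{\mathbf{x}}_{i^l}^l$ by $\mathbf{x}_{i^l}^\star$ at a controlled cost, telescoping yields the claimed contraction.

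\textbf{Step 1 (per-iteration upper bound).} For each $l \in [k,k+B-1]$ apply the descent lemma to the part of $F$ that changes under the update of block $i^l$ (which is $LC^1$ on $\bar{\mathcal{L}}^0$ by Lemma~\ref{descent}(b) and Remark~\ref{remark_on_L}), and use convexity of $g_{i^l}$ on the convex combination $\mathbf{x}_{i^l}^{l+1}=(1-\gamma)\mathbf{x}_{i^l}^l+\gamma\widehat{\mathbf{x}}_{i^l}^l$, to obtain a bound of the form
\begin{equation*}
V(\mathbf{x}^{l+1}) \leq V(\mathbf{x}^l) + \gamma\Big\langle \textstyle\sum_{j\in\mathcal{N}_{i^l}}\nabla_{\mathbf{x}_{i^l}}f_j(\mathbf{x}_{\mathcal{N}_j}^l),\Delta\widehat{\mathbf{x}}_{i^l}^l\Big\rangle + \gamma\big(g_{i^l}(\widehat{\mathbf{x}}_{i^l}^l)-g_{i^l}(\mathbf{x}_{i^l}^l)\big) + \tfrac{\gamma^2 L}{2}\|\Delta\widehat{\mathbf{x}}_{i^l}^l\|_2^2.
\end{equation*}
Now swap the exact gradients $\nabla_{\mathbf{x}_{i^l}}f_j(\mathbf{x}_{\mathcal{N}_j}^l)$ with the delayed ones $\nabla_{\mathbf{x}_{i^l}}f_j(\mathbf{x}_{\mathcal{N}_j}^l(i^l,j))$ used in \eqref{best_resp}; the mismatch is controlled by $L$ and $\|\mathbf{x}^l-\mathbf{x}^l(i^l,j)\|_2$ via Young's inequality, which by \eqref{x_tilde} is absorbed into the sum $\sum_{l=k-D}^{k+B-1}\|\Delta\widehat{\mathbf{x}}_{i^l}^l\|_2^2$.

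\textbf{Step 2 (introduce $\mathbf{x}^\star$ through the surrogate's optimality).} The key move is to replace $\widehat{\mathbf{x}}_{i^l}^l$ with $\mathbf{x}_{i^l}^\star$ on the right-hand side. Using the optimality of $\widehat{\mathbf{x}}_{i^l}^l$ for subproblem \eqref{best_resp} against the test point $\mathbf{x}_{i^l}^\star$, together with $\tau$-strong convexity (C1) and the gradient-consistency property (C2), yields
\begin{equation*}
\Big\langle \textstyle\sum_{j\in\mathcal{N}_{i^l}}\nabla_{\mathbf{x}_{i^l}}f_j(\mathbf{x}_{\mathcal{N}_j}^l(i^l,j)),\widehat{\mathbf{x}}_{i^l}^l-\mathbf{x}_{i^l}^\star\Big\rangle + g_{i^l}(\widehat{\mathbf{x}}_{i^l}^l)-g_{i^l}(\mathbf{x}_{i^l}^\star) \leq -\tfrac{\tau}{2}\|\widehat{\mathbf{x}}_{i^l}^l-\mathbf{x}_{i^l}^\star\|_2^2.
\end{equation*}
Combining with Step 1 (and again shifting delayed gradients to current ones via C3 and the Lipschitz continuity of $\nabla f_j$) gives a per-iteration bound in which the ``progress'' term is expressed as $\sum_{j\in\mathcal{N}_{i^l}}\langle \nabla_{\mathbf{x}_{i^l}}f_j(\mathbf{x}_{\mathcal{N}_j}^l),\mathbf{x}_{i^l}^\star-\mathbf{x}_{i^l}^l\rangle+g_{i^l}(\mathbf{x}_{i^l}^\star)-g_{i^l}(\mathbf{x}_{i^l}^l)$, plus an error that is $O(\|\Delta\widehat{\mathbf{x}}_{i^l}^l\|_2^2)$ after absorbing all delay terms through \eqref{x_tilde}. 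By convexity-type arguments applied to the differentiable parts $f_j$ (via the mean-value / descent lemma as in \eqref{newstr2}), this inner product is in turn bounded above by a quantity of the form $F(\mathbf{x}^l\!\!\mid_{\mathbf{x}_{i^l}\leftarrow\mathbf{x}_{i^l}^\star})-F(\mathbf{x}^l)$ plus a $\|\mathbf{x}^l-\mathbf{x}^\star\|_2^2$ slack that the error bound B1 will transform into $O(\|\Delta\widehat{\mathbf{x}}^l\|_2^2)$ via \eqref{errbdnew} and \eqref{lemma3}.

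\textbf{Step 3 (telescope across the window).} Sum the resulting inequalities over $l=k,\dots,k+B-1$. By D1, every block $i\in\mathcal{N}$ appears as $i^l$ for at least one such $l$; pick one such ``designated'' index $l(i)$ per block. On the $N$ designated iterations, the Step 2 argument contributes a term that, after summing over $i$, combines into $\gamma\,(V(\mathbf{x}^\star)-V(\mathbf{x}^k))$ thanks to the single-block-at-a-time cancellation (each $F(\mathbf{x}^l\mid_{\mathbf{x}_{i^l}\leftarrow\mathbf{x}_{i^l}^\star})$ chained across blocks reconstructs $F(\mathbf{x}^\star)$, up to Lipschitz cross-terms bounded by $\sum\|\Delta\widehat{\mathbf{x}}_{i^l}^l\|_2^2$ via Lemma~\ref{tworesults} and \eqref{x_tilde}); this yields the $(1-\gamma)(V(\mathbf{x}^k)-V(\mathbf{x}^\star))$ factor, with per-iteration error coefficient $\alpha_1$. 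On the remaining $B-N$ ``repeat'' iterations, the same Step~1–2 chain is used but without reaping a fresh $\gamma\,(V(\mathbf{x}^\star)-V(\mathbf{x}^l))$ contribution (since that block has already been accounted for); the error coefficient there is $\alpha_2$. Adding everything, the left-hand side collapses to $V(\mathbf{x}^{k+B})-V(\mathbf{x}^\star)$ and the right-hand side matches \eqref{rate}.

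\textbf{Main obstacle.} The delicate part is Step~3: converting the \emph{per-iteration, per-block} comparisons into a \emph{single global} $(1-\gamma)(V(\mathbf{x}^k)-V(\mathbf{x}^\star))$ bound. Blocks get updated at different times, and in between, other blocks move, so the Lipschitz cross-terms arising from telescoping $F(\mathbf{x}^l\!\!\mid_{\mathbf{x}_{i^l}\leftarrow\mathbf{x}_{i^l}^\star})$ across the window must be controlled uniformly via \eqref{x_tilde} and Lemma~\ref{tworesults}, and the distinction between ``designated'' and ``repeat'' iterations must be made so that no $\gamma\,(V(\mathbf{x}^\star)-V(\mathbf{x}^l))$ contribution is double-counted. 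This careful bookkeeping is what determines the precise form of the constants $\alpha_1$ and $\alpha_2$; everything else (descent lemma, convexity of $g_{i^l}$, optimality of $\widehat{\mathbf{x}}_{i^l}^l$, Lipschitz smoothing of delayed gradients) is standard.
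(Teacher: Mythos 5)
Your plan is sound and it isolates the same load-bearing ingredients as the paper's proof: the optimality of subproblem \eqref{best_resp} tested at $\mathbf{x}^\star_{i}(\mathbf{x}^k)$ on the first update of each block, convexity of $g_i$ to telescope the nonsmooth terms, Assumption D1 to split the window into $N$ ``first-touch'' updates (coefficient $\alpha_1$) and $B-N$ repeats (coefficient $\alpha_2$), and --- crucially --- the error bound B1 together with \eqref{lemma3} and \eqref{x_tilde} to convert every occurrence of $\|\mathbf{x}^l-\mathbf{x}^\star(\mathbf{x}^k)\|_2^2$ and of the delay mismatches into $\mathcal{O}\bigl(\sum_{l=k-D}^{k+B-1}\|\Delta\widehat{\mathbf{x}}^l_{i^l}\|_2^2\bigr)$. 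Where you differ is the scaffolding. You accumulate per-iteration descent inequalities forward and then reassemble $F(\mathbf{x}^\star)-F(\mathbf{x}^k)$ by chaining single-block replacements $F(\mathbf{x}^l\!\mid_{\mathbf{x}_{i^l}\leftarrow\mathbf{x}^\star_{i^l}})$, which forces you to control the second-order cross-terms of that chaining (your stated ``main obstacle''); since $F$ is nonconvex this is unavoidable in your route, though those cross-terms are quadratic in $\|\mathbf{x}^l-\mathbf{x}^\star(\mathbf{x}^k)\|_2$ and $\|\mathbf{x}^l-\mathbf{x}^k\|_2$ and hence controllable by exactly the tools you name. The paper instead writes $V(\mathbf{x}^{k+B})-V(\mathbf{x}^\star(\mathbf{x}^k))$ once via the Mean Value Theorem at an intermediate point $\boldsymbol{\xi}^k$, decomposes the displacement $\mathbf{x}^{k+B}_i-\mathbf{x}^\star_i$ along each block's update history inside the window (\texttt{term II}--\texttt{term IV} in \eqref{begin_a}), and pays instead for the repeated gradient swaps $\nabla_{\mathbf{x}_i}F(\boldsymbol{\xi}^k)\to\nabla_{\mathbf{x}_i}F(\widehat{\mathbf{x}}^{l^k_{i,t}})$, each costing a term $\rho L\|\widehat{\mathbf{x}}^{l^k_{i,t}}-\boldsymbol{\xi}^k\|_2$ times a displacement, again absorbed through B1 via \eqref{first_a} and \eqref{errbd}. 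This avoids forming partial-replacement points altogether and so sidesteps your chaining bookkeeping; the $(1-\gamma)$ contraction then emerges not from the convex-combination update directly but from collecting $\frac{1-\gamma}{\gamma}\left(V(\mathbf{x}^l)-V(\mathbf{x}^{l+1})\right)$ across the window and rearranging $V(\mathbf{x}^{k+B})-V^\star\leq\frac{1-\gamma}{\gamma}\left(V(\mathbf{x}^k)-V(\mathbf{x}^{k+B})\right)+E$. Both routes lead to \eqref{rate}; yours is arguably more intuitive but requires the chaining cross-terms to be written out carefully, while the paper's trades that for a longer but more mechanical sequence of gradient substitutions.
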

\begin{proof}
	See Section \ref{suppl}.
\end{proof}

Multiplying the two sides of \eqref{descent_bis} and \eqref{rate} by $(N\alpha_1+(B-N)\alpha_2)$ and $\tau-\gamma L(2+BD\rho^2)/2$ respectively, and adding the two inequalities together, yields
\begin{align}
\label{tseng}&V(\mathbf{x}^{k+B})-V^\star\leq\theta\left(V(\mathbf{x}^k)-V^\star\right)+\zeta\sum\limits_{l=k-D}^{k-1}\|\Delta\widehat{\mathbf{x}}_{i^l}^l\|_2^2,
\end{align}
for all $k\geq\bar{k}$, where
\begin{align*}
\theta\hspace{-.1cm}\triangleq\frac{(1-\gamma)(2\tau-\gamma L(BD\mathcal{N}^2_m+2))+2N\alpha_1+2(B-N)\alpha_2}{2\tau-\gamma L(BD\mathcal{N}^2_m+2)+2N\alpha_1+2(B-N)\alpha_2},
\end{align*}
and
\begin{align*}
\zeta\hspace{-.1cm}\triangleq\frac{(N\alpha_1+(B-N)\alpha_2)(2\tau+\gamma L(BD\rho^2(\gamma-1)+2))}{2\tau-\gamma L(BD\mathcal{N}^2_m+2)+2N\alpha_1+2(B-N)\alpha_2}.
\end{align*}
\subsubsection{\textbf{Step 3}}We can now apply Lemma 4.5 in \cite{tseng1991rate} by noticing that \eqref{descent_bis}, \eqref{liminf}, and \eqref{tseng} correspond, respectively, to (4.21), (4.22), and to the first inequality after (4.23) in \cite{tseng1991rate}. Theorem \ref{linear} readily follows, setting\begin{align}
\nonumber\lambda\triangleq&1-\frac{\gamma^2}{2}\frac{2\tau-\gamma L(BD\rho^2+2)}{2N\alpha_1+2(B-N)\alpha_2}\\\label{lambdalab}
&\cdot\frac{}{+\gamma(2-\gamma)(2\tau-\gamma L(BD\rho^2+2))}.
\end{align}
\subsection{Miscellanea results}\label{suppl}
This section contains the proofs of Lemma \ref{tworesults} and Lemma \ref{lemmarate}.\\

\textbf{\textit{{Proof of Lemma \ref{tworesults}}:}}
	(i) Assume without loss of generality that $t\leq h$. We have
	\begin{align}
	\nonumber&\|\widehat{\mathbf{x}}_i(\tilde{\mathbf{x}}^t(i))-\widehat{\mathbf{x}}_i(\tilde{\mathbf{x}}^h(i))\|_2^2\stackrel{\eqref{lipsch}}{\leq}\frac{\rho L_m^2}{\tau^2}\|\mathbf{x}^t(i,i)-\mathbf{x}^h(i,i)\|_2^2\\&\nonumber+\frac{\rho L^2}{\tau^2}\sum\limits_{j\in\mathcal{N}_i\backslash\{i\}}\|\mathbf{x}^t(i,j)-\mathbf{x}^h(i,j)\|_2^2\\
	\nonumber&\stackrel{\eqref{x_tilde}, \eqref{update}}{\leq}\left(\frac{3\rho\left(L_m^2+(\rho-1)L^2\right)}{\tau^2}\right)\Bigg(\gamma^2(B-N+1)\\
	\nonumber&\sum\limits_{l=t}^{h-1}\|\Delta\widehat{\mathbf{x}}_{i^l}^l\|_2^2+D\gamma^2\left(\sum\limits_{l=t-D}^{t-1}\|\Delta\widehat{\mathbf{x}}_{i^l}^l\|^2_2+\sum\limits_{l=h-D}^{h-1}\|\Delta\widehat{\mathbf{x}}_{i^l}^l\|_2^2\right)\Bigg).
	\end{align}
	(ii) Define $r_i^{h,k}\triangleq\underset{t\in[k;k+B-1]:i^t=i}{\arg\min}\,|t-h|$. We have:
	\begin{align}
	\|\Delta\widehat{\mathbf{x}}^h\|_2^2&\leq2\sum\limits_{i=1}^N\left(\|\widehat{\mathbf{x}}_i^h-\widehat{\mathbf{x}}_i^{r_i^{h,k}}\|_2^2+\|\Delta\widehat{\mathbf{x}}_i^{r^{h,k}_i}\|_2^2\right)\\
	&\nonumber\stackrel{\eqref{lip}}{\leq}2\sum\limits_{i=1}^N\Bigg(C_2\sum\limits_{l=k-D}^{k+B-2}\|\Delta\widehat{\mathbf{x}}_{i^l}^l\|_2^2+\|\Delta\widehat{\mathbf{x}}_i^{r_i^{h,k}}\|_2^2\Bigg).
	\end{align}	
	
\textbf{\textit{{Proof of Lemma \ref{lemmarate}}:}} Define  $T_i^k+1$ as the number of times agent $i$ performs its update within $[k,k+B-1]$; let  $l^k_{i,0},\ldots,l^k_{i,T_i^k}$,be the iteration indexes of such updates. By the Mean Value Theorem, there exists a vector $\boldsymbol{\xi}^k=\beta^k {\bx}^\star(\bx^k) + (1-\beta^k) \mathbf{x}^k$, for some  $\beta^k\in (0,1)$, such that
\begin{align}
\nonumber& V(\mathbf{x}^{k+B})-V(\mathbf{x}^\star(\mathbf{x}^k))=\left\langle\nabla_{\mathbf{x}}F(\boldsymbol{\xi}^k),\mathbf{x}^{k+B}-\mathbf{x}^\star(\mathbf{x}^k)\right\rangle\\\nonumber&+G(\mathbf{x}^{k+B})-G(\mathbf{x}^\star(\mathbf{x}^k))\\
\nonumber
&=\sum\limits_{i=1}^N\Bigg(\underbrace{\left\langle\nabla_{\mathbf{x}_i}F(\boldsymbol{\xi}^k),\mathbf{x}_i^{l^k_{i,1}}-\mathbf{x}_i^\star(\mathbf{x}^k)\right\rangle}_{\texttt{term II}}\\
\nonumber&\quad+\sum\limits_{t=1}^{T_i^k-1}\underbrace{\left\langle\nabla_{\mathbf{x}_i}F(\boldsymbol{\xi}^k),\mathbf{x}^{l^k_{i,t+1}}_i-\mathbf{x}_i^{l^k_{i,t}}\right\rangle}_{\texttt{term III}}\\
\nonumber&\quad+\underbrace{\left\langle\nabla_{\mathbf{x}_i}F(\boldsymbol{\xi}^k),\mathbf{x}^{k+B}_i-\mathbf{x}_i^{l^k_{i,T_i^k}}\right\rangle}_{\texttt{term IV}}\Bigg)\\
\label{begin_a}&\quad+G(\mathbf{x}^{k+B})-G(\mathbf{x}^\star(\mathbf{x}^k)).
\end{align}

To prove \eqref{rate}, it is then sufficient show that  \texttt{term  II}, \texttt{term III}, and \texttt{term IV} in \eqref{begin_a} converge at a geometric rate up to an error of the order $\mathcal{O}\left(\sum\limits_{l=k-D}^{k+B-1}\|\Delta\widehat{\mathbf{x}}^l_{i_l}\|_2^2\right)$. To do this, we first show that \texttt{term  II}, \texttt{term III}, and \texttt{term IV} converges at a geometric rate up to the error terms $a_{i,4}^k$, $b_{i,t,4}^k$, and $c_{i,4}^k$, respectively [see \eqref{one_a}, \eqref{two_a}, and \eqref{three_a}]. Then, we prove that each of these errors is of the order $\mathcal{O}\left(\sum\limits_{l=k-D}^{k+B-1}\|\Delta\widehat{\mathbf{x}}^l_{i_l}\|_2^2\right)$, as desired [see \eqref{aik}, and \eqref{bcik}].

\texttt{Term II} can be upper bounded as
\begin{align}
\nonumber&\left\langle\nabla_{\mathbf{x}_i}F(\boldsymbol{\xi}^k),\mathbf{x}^{l^k_{i,1}}_i-\mathbf{x}_i^\star(\mathbf{x}^k)\right\rangle\\
\nonumber&\stackrel{A2}{\leq}\left\langle\nabla_{\mathbf{x}_i}F\left(\widehat{\mathbf{x}}^{l^k_{i,0}}\right),\mathbf{x}^{l^k_{i,1}}_i-\mathbf{x}_i^\star(\mathbf{x}^k)\right\rangle\\
\nonumber&\quad+\underbrace{\rho L\left\|\widehat{\mathbf{x}}^{l^k_{i,0}}-\boldsymbol{\xi}^k\right\|_2\left\|\mathbf{x}^{l^k_{i,1}}_i-\mathbf{x}_i^\star(\mathbf{x}^k)\right\|_2}_{\triangleq a_{i,1}^k}\\
\nonumber&\hspace{-.35cm}\stackrel{A2,C2,C3}{\leq}\Bigg\langle\nabla\tilde{f}_i\left(\widehat{\mathbf{x}}_i^{l^k_{i,0}};\mathbf{x}^{l^k_{i,0}}_{\mathcal{N}_i}(i,i)\right)\\
\nonumber&\quad+\sum\limits_{j\in\mathcal{N}_i\backslash\{i\}}\nabla_{\mathbf{x}_i}f_j\left(\mathbf{x}^{l_{i,0}^k}_{\mathcal{N}_j}(i,j)\right),
\mathbf{x}_i^{l^k_{i,1}}-\mathbf{x}_i^\star(\mathbf{x}^k)\Bigg\rangle\\
\nonumber&\quad+\left\|\mathbf{x}^{l^k_{i,1}}_i-\mathbf{x}_i^\star(\mathbf{x}^k)\right\|_2\Bigg(L_i\left\|\widehat{\mathbf{x}}^{l_{i,0}^k}_{\mathcal{N}_i}-\mathbf{x}^{l^k_{i,0}}_{\mathcal{N}_i}(i,i)\right\|_2\\
\nonumber&\quad+L\sum\limits_{j\in\mathcal{N}_i\backslash\{i\}}\left\|\widehat{\mathbf{x}}_{\mathcal{N}_j}^{l_{i,0}^k}-\mathbf{x}^{l_{i,0}^k}_{\mathcal{N}_j}(i,j)\right\|_2\Bigg)+a_{i,1}^k\\
\nonumber&\stackrel{(a)}{\leq}(\gamma-1)\Bigg\langle\nabla\tilde{f}_i\left(\widehat{\mathbf{x}}_i^{l^k_{i,0}};\mathbf{x}^{l^k_{i,0}}_{\mathcal{N}_i}(i,i)\right)\\
\nonumber&\quad+\sum\limits_{j\in\mathcal{N}_i\backslash\{i\}}\nabla_{\mathbf{x}_i}f_j\left(\mathbf{x}^{l_{i,0}^k}_{\mathcal{N}_j}(i,j)\right),\Delta\widehat{\mathbf{x}}_i^{l^k_{i,0}}\Bigg\rangle+g_i\left(\mathbf{x}_i^\star(\mathbf{x}^k)\right)\\
\nonumber&\quad-g_i\left(\widehat{\mathbf{x}}_i^{l^k_{i,0}}\right)+a_{i,2}^k\stackrel{C2}{\leq} g_i(\mathbf{x}_i^\star(\mathbf{x}^k))-g_i\left(\widehat{\mathbf{x}}_i^{l^k_{i,0}}\right)\\
\nonumber&+(\gamma-1)\left\langle\sum\limits_{j\in\mathcal{N}_i}\nabla_{\mathbf{x}_i}f_j\left(\mathbf{x}^{l^k_{i,0}}_{\mathcal{N}_j}(i,j)\right),\Delta\widehat{\mathbf{x}}^{l^k_{i,0}}_i\right\rangle\\
\nonumber&+(1-\gamma)\Bigg\|\nabla\tilde{f}_i\left(\widehat{\mathbf{x}}_i^{l^k_{i,0}};\mathbf{x}^{l^k_{i,0}}_{\mathcal{N}_i}(i,i)\right)-\nabla\tilde{f}_i\left(\mathbf{x}_i^{l^k_{i,0}};\mathbf{x}^{l^k_{i,0}}_{\mathcal{N}_i}(i,i)\right)\Bigg\|_2\\
\nonumber&\cdot\left\|\Delta\widehat{\mathbf{x}}^{l^k_{i,0}}_i\right\|_2+a_{i,2}^k\stackrel{(b)}{\leq}g_i\left(\mathbf{x}_i^\star(\mathbf{x}^k)\right)-g_i\left(\widehat{\mathbf{x}}_i^{l^k_{i,0}}\right)\\
\nonumber&+\frac{1-\gamma}{\gamma}\left(V\left(\mathbf{x}^{l^k_{i,0}}\right)-V\left(\mathbf{x}^{l^k_{i,0}+1}\right)\right)\\
\nonumber&+(1-\gamma)\left\|\sum\limits_{j\in\mathcal{N}_i}\left(\nabla_{\mathbf{x}_i}f_j\left(\mathbf{x}^{l^k_{i,0}}_{\mathcal{N}_j}\right)-\nabla_{\mathbf{x}_i}f_j\left(\mathbf{x}^{l^k_{i,0}}_{\mathcal{N}_j}(i,j)\right)\right)\right\|_2\\
\nonumber&\cdot\left\|\Delta\widehat{\mathbf{x}}_i^{l^k_{i,0}}\right\|_2+\frac{L\gamma(1-\gamma)}{2}\left\|\Delta\widehat{\mathbf{x}}_i^{l^k_{i,0}}\right\|_2^2+a_{i,3}^k\\
\nonumber&+(1-\gamma)\left(g_i\left(\widehat{\mathbf{x}}_i^{l^k_{i,0}}\right)-g_i\left(\mathbf{x}_i^{l^k_{i,0}}\right)\right)\\
\nonumber&\stackrel{(c)}{=}\frac{1-\gamma}{\gamma}\left(V\left(\mathbf{x}^{l^k_{i,0}}\right)-V\left(\mathbf{x}^{l^k_{i,0}+1}\right)\right)+g_i\left(\mathbf{x}_i^\star(\mathbf{x}^k)\right)\\
\label{one_a}&\quad+(\gamma-1)g_i\left(\mathbf{x}_i^{l^k_{i,0}}\right)
-\gamma g_i\left(\widehat{\mathbf{x}}_i^{l^k_{i,0}}\right)+a_{i,4}^k;
\end{align}
where the quantities $a_{i,2}^k$ in (a), and $a_{i,3}^k$ in (b) are defined in \eqref{ai2} and \eqref{ai3} at the bottom of the next page, respectively; furthermore in (b) we used the descent lemma, and in (c) we defined\begin{align*}
&a_{i,4}^k\triangleq \,a_{i,3}^k+\frac{L\gamma(1-\gamma)}{2}\left\|\Delta\widehat{\mathbf{x}}_i^{l^k_{i,0}}\right\|_2^2+(1-\gamma)\\\nonumber&\cdot\hspace{-.05cm}\underbrace{\left\|\sum\limits_{j\in\mathcal{N}_i}\left(\nabla_{\mathbf{x}_i}f_j\left(\mathbf{x}^{l^k_{i,0}}_{\mathcal{N}_j}\right)-\nabla_{\mathbf{x}_i}f_j\left(\mathbf{x}^{l^k_{i,0}}_{\mathcal{N}_j}(i,j)\right)\right)\right\|_2\left\|\Delta\widehat{\mathbf{x}}_i^{l^k_{i,0}}\right\|_2}_{\texttt{term VII}}\hspace{-.05cm}.
\end{align*} \begin{figure*}[b]\hrule\begin{align}
&a_{i,2}^k\triangleq a_{i,1}^k+\underbrace{\left\|\mathbf{x}^{l^k_{i,1}}_i-\mathbf{x}_i^\star(\mathbf{x}^k)\right\|_2\left(L_i\left\|\widehat{\mathbf{x}}^{l_{i,0}^k}_{\mathcal{N}_i}-\mathbf{x}^{l^k_{i,0}}_{\mathcal{N}_i}(i,i)\right\|_2+L\sum\limits_{j\in\mathcal{N}_i\backslash\{i\}}\left\|\widehat{\mathbf{x}}_{\mathcal{N}_j}^{l_{i,0}^k}-\mathbf{x}^{l_{i,0}^k}_{\mathcal{N}_j}(i,j)\right\|_2\right)}_{\texttt{term V}}\label{ai2}\\
&a_{i,3}^k\triangleq a_{i,2}^k+(1-\gamma)\underbrace{\bigg\|\nabla\tilde{f}_i\left(\widehat{\mathbf{x}}_i^{l^k_{i,0}};\mathbf{x}^{l^k_{i,0}}_{\mathcal{N}_i}(i,i)\right)-\nabla\tilde{f}_i\left(\mathbf{x}_i^{l^k_{i,0}};\mathbf{x}^{l^k_{i,0}}_{\mathcal{N}_i}(i,i)\right)\bigg\|_2\left\|\Delta\widehat{\mathbf{x}}^{l^k_{i,0}}_i\right\|_2}_{\texttt{term VI}}\label{ai3}\end{align}\end{figure*}

\texttt{Term III} can be upper bounded as: for any $i$ and $t\in[1,T^k_i-1]$,
\begin{align}
\nonumber&\left\langle\nabla_{\mathbf{x}_i}F(\boldsymbol{\xi}^k),\mathbf{x}^{l^k_{i,t+1}}_i-\mathbf{x}_i^{l^k_{i,t}}\right\rangle\\
\nonumber&\stackrel{A2}{\leq}\left\langle\nabla_{\mathbf{x}_i}F\left(\widehat{\mathbf{x}}^{l^k_{i,t}}\right),\mathbf{x}^{l^k_{i,t+1}}_i-\mathbf{x}_i^{l^k_{i,t}}\right\rangle\\
\nonumber&\quad+\underbrace{\rho L\left\|\widehat{\mathbf{x}}^{l^k_{i,t}}-\boldsymbol{\xi}^k\right\|_2\left\|\mathbf{x}^{l^k_{i,t}}_i-\mathbf{x}_i^{l^k_{i,t+1}}\right\|_2}_{\triangleq b_{i,t,1}^k}\\
\nonumber&\hspace{-.35cm}\stackrel{A2,C2, C3}{\leq}\Bigg\langle\nabla\tilde{f}_i\left(\widehat{\mathbf{x}}_i^{l^k_{i,t}};\mathbf{x}^{l^k_{i,t}}_{\mathcal{N}_i}(i,i)\right)\\
\nonumber&\quad+\sum\limits_{j\in\mathcal{N}_i\backslash\{i\}}\nabla_{\mathbf{x}_i}f_j\left(\mathbf{x}_{\mathcal{N}_j}^{l_{i,t}^k}(i,j)\right),\mathbf{x}_i^{l^k_{i,t+1}}-\mathbf{x}_i^{l^k_{i,t}}\Bigg\rangle\\
\nonumber&\quad+\left\|\mathbf{x}^{l^k_{i,t}}_i-\mathbf{x}_i^{l^k_{i,t+1}}\right\|_2\Bigg(L_i\left\|\widehat{\mathbf{x}}^{l_{i,t}^k}_{\mathcal{N}_i}-\mathbf{x}^{l^k_{i,t}}_{\mathcal{N}_i}(i,i)\right\|_2\\
\nonumber&\quad+L\sum\limits_{j\in\mathcal{N}_i\backslash\{i\}}\left\|\widehat{\mathbf{x}}^{l_{i,t}^k}_{\mathcal{N}_j}-\mathbf{x}^{l_{i,t}^k}_{\mathcal{N}_j}(i,j)\right\|_2\Bigg)+b_{i,t,1}^k\\
\nonumber&\stackrel{(a)}{\leq}(\gamma-1)\Bigg\langle\nabla\tilde{f}_i\left(\widehat{\mathbf{x}}_i^{l^k_{i,t}};\mathbf{x}^{l^k_{i,t}}_{\mathcal{N}_i}(i,i)\right)\\
\nonumber&\quad+\sum\limits_{j\in\mathcal{N}_i\backslash\{i\}}\nabla_{\mathbf{x}_i}f_j\left(\mathbf{x}_{\mathcal{N}_j}^{l_{i,t}^k}(i,j)\right),\Delta\widehat{\mathbf{x}}_i^{l^k_{i,t}}\Bigg\rangle+g_i\left(\mathbf{x}_i^{l^k_{i,t}}\right)\\
\nonumber&\quad-g_i\left(\widehat{\mathbf{x}}_i^{l^k_{i,t}}\right)+b_{i,t,2}^k \stackrel{C2}{\leq}g_i\left(\mathbf{x}_i^{l^k_{i,t}}\right)-g_i\left(\widehat{\mathbf{x}}_i^{l^k_{i,t}}\right)\\
\nonumber&+(\gamma-1)\left\langle\sum\limits_{j\in\mathcal{N}_i}\nabla_{\mathbf{x}_i}f_j\left(\mathbf{x}_{\mathcal{N}_j}^{l^k_{i,t}}(i,j)\right),\Delta\widehat{\mathbf{x}}^{l^k_{i,t}}_i\right\rangle\\
\nonumber&+(1-\gamma)\left\|\nabla\tilde{f}_i\left(\widehat{\mathbf{x}}_i^{l^k_{i,t}};\mathbf{x}^{l^k_{i,t}}_{\mathcal{N}_i}(i,i)\right)-\nabla\tilde{f}_i\left(\mathbf{x}_i^{l^k_{i,t}};\mathbf{x}^{l^k_{i,t}}_{\mathcal{N}_i}(i,i)\right)\right\|_2\\
\nonumber&\cdot\left\|\Delta\widehat{\mathbf{x}}^{l^k_{i,t}}_i\right\|_2+b_{i,t,2}^k\stackrel{(b)}{\leq}g_i\left(\mathbf{x}_i^{l^k_{i,t}}\right)-g_i\left(\widehat{\mathbf{x}}_i^{l^k_{i,t}}\right)\\
\nonumber&+\frac{1-\gamma}{\gamma}\left(V\left(\mathbf{x}^{l^k_{i,t}}\right)-V\left(\mathbf{x}^{l^k_{i,t}+1}\right)\right)\\
\nonumber&+(1-\gamma)\left\|\sum\limits_{j\in\mathcal{N}_i}\left(\nabla_{\mathbf{x}_i}f_j\left(\mathbf{x}^{l^k_{i,t}}_{\mathcal{N}_j}\right)-\nabla_{\mathbf{x}_i}f_j\left(\mathbf{x}^{l^k_{i,t}}_{\mathcal{N}_j}(i,j)\right)\right)\right\|_2\\
\nonumber&\left\|\Delta\widehat{\mathbf{x}}_i^{l^k_{i,t}}\right\|_2+\frac{L\gamma(1-\gamma)}{2}\left\|\Delta\widehat{\mathbf{x}}_i^{l^k_{i,t}}\right\|_2^2+b_{i,t,3}^k\\
\nonumber&+(1-\gamma)\left(g_i\left(\widehat{\mathbf{x}}_i^{l^k_{i,t}}\right)-g_i\left(\mathbf{x}_i^{l^k_{i,t}}\right)\right)\\
\nonumber&=\frac{1-\gamma}{\gamma}\left(V\left(\mathbf{x}^{l^k_{i,t}}\right)-V\left(\mathbf{x}^{l^k_{i,t}+1}\right)\right)\\
\label{two_a}&\quad+\gamma\left(g_i\left(\mathbf{x}_i^{l^k_{i,t}}\right)-g_i\left(\widehat{\mathbf{x}}_i^{l^k_{i,t}}\right)\right)+b_{i,t,4}^k;
\end{align}
where the quantities $b_{i,t,2}^k$ in (a), and $b_{i,t,3}^k$ in (b) are defined in \eqref{bit2} and \eqref{bit3} at the bottom of the next page, respectively; furthermore in (b) we used the descent lemma, and in (c) we defined \begin{align*}
&b_{i,t,4}^k\triangleq\,b_{i,t,3}^k+\frac{L\gamma(1-\gamma)}{2}\left\|\Delta\widehat{\mathbf{x}}_i^{l^k_{i,t}}\right\|_2^2+(1-\gamma)\\
&\cdot\hspace{-.05cm}\underbrace{\left\|\sum\limits_{j\in\mathcal{N}_i}\left(\nabla_{\mathbf{x}_i}f_j\left(\mathbf{x}^{l^k_{i,t}}_{\mathcal{N}_j}\right)-\nabla_{\mathbf{x}_i}f_j\left(\mathbf{x}^{l^k_{i,t}}_{\mathcal{N}_j}(i,j)\right)\right)\right\|_2\left\|\Delta\widehat{\mathbf{x}}_i^{l^k_{i,t}}\right\|_2}_{\texttt{term VII}}\hspace{-.05cm}.\end{align*}

\begin{figure*}[b]\hrule\begin{align}
&b_{i,t,2}^k\triangleq b_{i,t,1}^k+\underbrace{\left\|\mathbf{x}^{l^k_{i,t}}_i-\mathbf{x}_i^{l^k_{i,t+1}}\right\|_2\left(L_i\left\|\widehat{\mathbf{x}}^{l_{i,t}^k}_{\mathcal{N}_i}-\mathbf{x}^{l^k_{i,t}}_{\mathcal{N}_i}(i,i)\right\|_2+L\sum\limits_{j\in\mathcal{N}_i\backslash\{i\}}\left\|\widehat{\mathbf{x}}^{l_{i,t}^k}_{\mathcal{N}_j}-\mathbf{x}^{l_{i,t}^k}_{\mathcal{N}_j}(i,j)\right\|_2\right)}_{\texttt{term VIII}}\label{bit2}\\
&b_{i,t,3}^k\triangleq b_{i,t,2}^k+(1-\gamma)\underbrace{\left\|\nabla\tilde{f}_i\left(\widehat{\mathbf{x}}_i^{l^k_{i,t}};\mathbf{x}^{l^k_{i,t}}_{\mathcal{N}_i}(i,i)\right)-\nabla\tilde{f}_i\left(\mathbf{x}_i^{l^k_{i,t}};\mathbf{x}^{l^k_{i,t}}_{\mathcal{N}_i}(i,i)\right)\right\|_2\left\|\Delta\widehat{\mathbf{x}}^{l^k_{i,t}}_i\right\|_2}_{\texttt{term VI}}\label{bit3}\end{align}\end{figure*}
Following similar steps, we can bound \texttt{term IV}, as
\begin{align}
\nonumber&\left\langle\nabla_{\mathbf{x}_i}F(\boldsymbol{\xi}^k),\mathbf{x}^{k+B}_i-\mathbf{x}_i^{l^k_{i,T_i^k}}\right\rangle\\
\nonumber&\stackrel{A2}{\leq}\left\langle\nabla_{\mathbf{x}_i}F\left(\widehat{\mathbf{x}}^{l^k_{i,T^k_i}}\right),\mathbf{x}^{k+B}_i-\mathbf{x}_i^{l^k_{i,T^k_i}}\right\rangle\\
\nonumber&\quad+\underbrace{\rho L\left\|\widehat{\mathbf{x}}^{l^k_{i,T^k_i}}-\boldsymbol{\xi}^k\right\|_2\left\|\mathbf{x}^{l^k_{i,T^k_i}}_i-\mathbf{x}_i^{k+B}\right\|_2}_{c_{i,1}^k}\\
\nonumber&\hspace{-.35cm}\stackrel{A2,C2, C3}{\leq}\Bigg\langle\nabla\tilde{f}_i\left(\widehat{\mathbf{x}}_i^{l^k_{i,T^k_i}};\mathbf{x}^{l^k_{i,T^k_i}}_{\mathcal{N}_i}(i,i)\right)\\
\nonumber&\quad+\sum\limits_{j\in\mathcal{N}_i\backslash\{i\}}\nabla_{\mathbf{x}_i}f_j\left(\mathbf{x}_{\mathcal{N}_j}^{l_{i,T^k_i}^k}(i,j)\right),\mathbf{x}_i^{k+B}-\mathbf{x}_i^{l^k_{i,T^k_i}}\Bigg\rangle\\
\nonumber&\quad+\left\|\mathbf{x}^{l^k_{i,T^k_i}}_i-\mathbf{x}_i^{k+B}\right\|_2\Bigg(L_i\left\|\widehat{\mathbf{x}}^{l_{i,T^k_i}^k}_{\mathcal{N}_i}-\mathbf{x}^{l^k_{i,T^k_i}}_{\mathcal{N}_i}(i,i)\right\|_2\\
\nonumber&\quad+L\sum\limits_{j\in\mathcal{N}_i\backslash\{i\}}\left\|\widehat{\mathbf{x}}^{l_{i,T^k_i}^k}_{\mathcal{N}_j}-\mathbf{x}^{l_{i,T^k_i}^k}_{\mathcal{N}_j}(i,j)\right\|_2\Bigg)+c_{i,1}^k\\
\nonumber&\stackrel{(a)}{\leq}(\gamma-1)\Bigg\langle\nabla\tilde{f}_i\left(\widehat{\mathbf{x}}_i^{l^k_{i,T^k_i}};\mathbf{x}^{l^k_{i,T^k_i}}_{\mathcal{N}_i}(i,i)\right)\\
\nonumber&\quad+\sum\limits_{j\in\mathcal{N}_i\backslash\{i\}}\nabla_{\mathbf{x}_i}f_j\left(\mathbf{x}_{\mathcal{N}_j}^{l_{i,T^k_i}^k}(i,j)\right),\Delta\widehat{\mathbf{x}}_i^{l^k_{i,T^k_i}}\Bigg\rangle+g_i\left(\mathbf{x}_i^{l^k_{i,T^k_i}}\right)\\
\nonumber&-g_i\left(\widehat{\mathbf{x}}_i^{l^k_{i,T^k_i}}\right)+c_{i,2}^k \stackrel{C2}{\leq}g_i\left(\mathbf{x}_i^{l^k_{i,T^k_i}}\right)-g_i\left(\widehat{\mathbf{x}}_i^{l^k_{i,T^k_i}}\right)\\
\nonumber&+(\gamma-1)\left\langle\sum\limits_{j\in\mathcal{N}_i}\nabla_{\mathbf{x}_i}f_j\left(\mathbf{x}_{\mathcal{N}_j}^{l^k_{i,T^k_i}}(i,j)\right),\Delta\widehat{\mathbf{x}}^{l^k_{i,T^k_i}}_i\right\rangle\\
\nonumber&+(1-\gamma)\Bigg\|\nabla\tilde{f}_i\left(\widehat{\mathbf{x}}_i^{l^k_{i,T^k_i}};\mathbf{x}^{l^k_{i,T^k_i}}_{\mathcal{N}_i}(i,i)\right)-\\
\nonumber&\nabla\tilde{f}_i\left(\mathbf{x}_i^{l^k_{i,T^k_i}};\mathbf{x}^{l^k_{i,T^k_i}}_{\mathcal{N}_i}(i,i)\right)\Bigg\|_2\left\|\Delta\widehat{\mathbf{x}}^{l^k_{i,T^k_i}}_i\right\|_2+c_{i,2}^k\stackrel{(b)}{\leq}g_i\left(\mathbf{x}_i^{l^k_{i,T^k_i}}\right)\\
\nonumber&-g_i\left(\widehat{\mathbf{x}}_i^{l^k_{i,T^k_i}}\right)+\frac{1-\gamma}{\gamma}\left(V\left(\mathbf{x}^{l^k_{i,T^k_i}}\right)-V\left(\mathbf{x}^{l^k_{i,T^k_i}+1}\right)\right)\\
\nonumber&+(1-\gamma)\left\|\sum\limits_{j\in\mathcal{N}_i}\left(\nabla_{\mathbf{x}_i}f_j\left(\mathbf{x}^{l^k_{i,T^k_i}}_{\mathcal{N}_j}\right)-\nabla_{\mathbf{x}_i}f_j\left(\mathbf{x}^{l^k_{i,T^k_i}}_{\mathcal{N}_j}(i,j)\right)\right)\right\|_2\\
\nonumber&\left\|\Delta\widehat{\mathbf{x}}_i^{l^k_{i,T^k_i}}\right\|_2+\frac{L\gamma(1-\gamma)}{2}\left\|\Delta\widehat{\mathbf{x}}_i^{l^k_{i,T^k_i}}\right\|_2^2+c_{i,3}^k\\
\nonumber&+(1-\gamma)\left(g_i\left(\widehat{\mathbf{x}}_i^{l^k_{i,T^k_i}}\right)-g_i\left(\mathbf{x}_i^{l^k_{i,T^k_i}}\right)\right)\\
\nonumber&=\frac{1-\gamma}{\gamma}\left(V\left(\mathbf{x}^{l^k_{i,T^k_i}}\right)-V\left(\mathbf{x}^{l^k_{i,T^k_i}+1}\right)\right)\\
\label{three_a}&\quad+\gamma\left(g_i\left(\mathbf{x}_i^{l^k_{i,T^k_i}}\right)-g_i\left(\widehat{\mathbf{x}}_i^{l^k_{i,T^k_i}}\right)\right)+c_{i,4}^k;
\end{align}
where the quantities $c_{i,2}^k$ in (a), and $c_{i,3}^k$ in (b) are defined in \eqref{ci2} and \eqref{ci3} at the bottom of the next page, respectively; furthermore in (b) we used the descent lemma, and in (c) we defined  \begin{align*}
&c_{i,4}^k\triangleq\, c_{i,3}^k+\frac{L\gamma(1-\gamma)}{2}\left\|\Delta\widehat{\mathbf{x}}_i^{l^k_{i,T^k_i}}\right\|_2^2+(1-\gamma)\\
&\hspace{-.05cm}\cdot\hspace{-.05cm}\underbrace{\left\|\sum\limits_{j\in\mathcal{N}_i}\hspace{-.1cm}\left(\nabla_{\mathbf{x}_i}f_j\left(\mathbf{x}^{l^k_{i,T^k_i}}_{\mathcal{N}_j}\right)-\nabla_{\mathbf{x}_i}f_j\left(\mathbf{x}^{l^k_{i,T^k_i}}_{\mathcal{N}_j}(i,j)\right)\right)\right\|_2\hspace{-.05cm}\left\|\Delta\widehat{\mathbf{x}}_i^{l^k_{i,T^k_i}}\right\|_2}_{\texttt{term VII}}\hspace{-.1cm}.
\end{align*}

 \begin{figure*}[b]\begin{align}&c_{i,2}^k\triangleq c_{i,1}^k+\underbrace{\left\|\mathbf{x}^{l^k_{i,T^k_i}}_i-\mathbf{x}_i^{k+B}\right\|_2\left(L_i\left\|\widehat{\mathbf{x}}^{l_{i,T^k_i}^k}_{\mathcal{N}_i}-\mathbf{x}^{l^k_{i,T^k_i}}_{\mathcal{N}_i}(i,i)\right\|_2+L\sum\limits_{j\in\mathcal{N}_i\backslash\{i\}}\left\|\widehat{\mathbf{x}}^{l_{i,T^k_i}^k}_{\mathcal{N}_j}-\mathbf{x}^{l_{i,T^k_i}^k}_{\mathcal{N}_j}(i,j)\right\|_2\right)}_{\texttt{term IX}}\label{ci2}\\
	&c_{i,3}^k\triangleq c_{i,2}^k+(1-\gamma)\underbrace{\left\|\nabla\tilde{f}_i\left(\widehat{\mathbf{x}}_i^{l^k_{i,T^k_i}};\mathbf{x}^{l^k_{i,T^k_i}}_{\mathcal{N}_i}(i,i)\right)-\nabla\tilde{f}_i\left(\mathbf{x}_i^{l^k_{i,T^k_i}};\mathbf{x}^{l^k_{i,T^k_i}}_{\mathcal{N}_i}(i,i)\right)\right\|_2\left\|\Delta\widehat{\mathbf{x}}^{l^k_{i,T^k_i}}_i\right\|_2}_{\texttt{term VI}}\label{ci3}\end{align}\end{figure*}

\noindent We now show that the error terms $a_{i,4}^k$, $b_{i,t,4}^k$, and $c_{i,4}^k$, are of the order $\mathcal{O}\left(\sum\limits_{l=k-D}^{k+B-1}\|\Delta\widehat{\mathbf{x}}^l_{i_l}\|_2^2\right)$. To do so, in the following we properly upper bound each term inside $a_{i,4}^k$, $b_{i,t,4}^k$, and $c_{i,4}^k$.

We begin noticing that, by the definition of $\boldsymbol{\xi}^k$, it follows\vspace{-0.3cm}
\begin{align}
\nonumber&\|\widehat{\mathbf{x}}^h-\boldsymbol{\xi}^k\|_2\\
\nonumber&=\|(1-\beta^k)\mathbf{x}^k+\beta^k\mathbf{x}^\star(\mathbf{x}^k)-\widehat{\mathbf{x}}^h\|_2\\
\nonumber&\leq\|\mathbf{x}^k-\mathbf{x}^\star(\mathbf{x}^k)\|_2+\|\widehat{\mathbf{x}}^h-\mathbf{x}^k\|_2\\
&\leq\|\mathbf{x}^k-\mathbf{x}^\star(\mathbf{x}^k)\|_2+\|\Delta\widehat{\mathbf{x}}^h\|+\|\mathbf{x}^h-\mathbf{x}^k\|,\label{first_a}
\end{align}
for all $h\in[k;k+B-1]$.\\

\noindent\textbf{1) Bounding} $a_{i,1}^k$: there holds
\begin{align}
\nonumber&a_{i,1}^k\stackrel{(a)}{\leq}\frac{3\rho L}{2}\Bigg(2\|\mathbf{x}^k-\mathbf{x}^\star(\mathbf{x}^k)\|_2^2+(1+\gamma^2)\|\Delta\widehat{\mathbf{x}}^{l_{i,0}^k}\|_2^2\\
\nonumber&+2\|\mathbf{x}^{l^k_{i,0}}-\mathbf{x}^k\|_2^2\Bigg)\stackrel{(b)}{\leq}3\rho L\Bigg(\kappa^2(1+L+NL_m)^2\Big(\|\Delta\widehat{\mathbf{x}}^k\|_2^2\\
\nonumber&+C_2\sum\limits_{l=k-D}^{k+B-2}\|\Delta\widehat{\mathbf{x}}^l_{i^l}\|_2^2\Big)+(NC_2+1)(1+\gamma^2)\sum\limits_{l=k-D}^{k+B-1}\|\Delta\widehat{\mathbf{x}}^l_{i^l}\|_2^2\\
\label{fifth_a}&+\gamma^2(B-N+1)\sum\limits_{l=k-D}^{k+B-2}\|\Delta\widehat{\mathbf{x}}^l_{i^l}\|_2^2\Bigg)
\stackrel{(c)}{\leq}\rho L\beta_1\sum\limits_{l=k-D}^{k+B-1}\|\Delta\widehat{\mathbf{x}}^l_{i^l}\|_2^2,
\end{align}	
where in (a) we used \eqref{first_a} and the Young's inequality; (b) follows from \eqref{lip}, \eqref{fourth_bis_a}, and the fact that, for any $k\geq0$,
\begin{align}
	\nonumber&\|\mathbf{x}^k-\mathbf{x}^\star(\mathbf{x}^k)\|_2\\\nonumber&\stackrel{B1}{\leq}\kappa\|\mathbf{x}^k-\texttt{prox}_G\left(\nabla_{\mathbf{x}}F(\mathbf{x}^k)-\mathbf{x}^k\right)\|_2\\\nonumber&\stackrel{\eqref{lemma3}}{\leq}\kappa(1+L+NL_m)\|\widehat{\mathbf{x}}(\bar{\mathbf{x}}^k)-\mathbf{x}^k\|_2\\
	&\leq\kappa(1+L+NL_m)\left(\|\widehat{\mathbf{x}}(\bar{\mathbf{x}}^k)-\widehat{\mathbf{x}}^k\|_2+\|\Delta\widehat{\mathbf{x}}^k\|_2\right)\label{errbd};
\end{align}
and in (c) we used  \eqref{fourth_bis_a} and defined\begin{align*}
\beta_1\triangleq& C_2\Bigg(\kappa^2(1+L+NL_m)^2(2N+1)+N(1+\gamma^2)\Bigg)\\&+\kappa^2(1+L+NL_m)^2+1+\gamma^2(B-N+2).
\end{align*}

\noindent\textbf{2) Bounding} $b_{i,t,1}^k$ and $c_{i,1}^k$: for $t\in[1;T^k_i-1]$,
\begin{align}
\nonumber&b_{i,t,1}^k\stackrel{(a)}{\leq}\frac{\rho L}{2}\Big(3\|\mathbf{x}^k-\mathbf{x}^\star(\mathbf{x}^k)\|_2^2+(3+\gamma^2)\|\Delta\widehat{\mathbf{x}}^{l^k_{i,t}}\|_2^2\\
\nonumber&\hspace{-.15cm}+3\|\mathbf{x}^{l^k_{i,t}}-\mathbf{x}^k\|_2^2\Big)\stackrel{(b)}{\leq}\frac{\rho L}{2}\Bigg(6\kappa^2(1+L+NL_m)^2\Bigg(\|\Delta\widehat{\mathbf{x}}^k\|_2^2\\
\nonumber&\hspace{-.15cm}+C_2\sum\limits_{l=k-D}^{k+B-2}\|\Delta\widehat{\mathbf{x}}_{i^l}^l\|_2^2\Bigg)+2(NC_2+1)(3+\gamma^2)\hspace{-.1cm}\sum\limits_{l=k-D}^{k+B-1}\hspace{-.15cm}\|\Delta\widehat{\mathbf{x}}_{i^l}^l\|_2^2\\
\label{fifth_bis_a}&\hspace{-.15cm}+3\gamma^2(B-N+1)\sum\limits_{l=k-D}^{k+B-2}\|\Delta\widehat{\mathbf{x}}_{i^l}^l\|_2^2\Bigg)\stackrel{(c)}{\leq}\rho L\beta_2\hspace{-.1cm}\sum\limits_{l=k-D}^{k+B-1}\hspace{-.15cm}\|\Delta\widehat{\mathbf{x}}_{i^l}^l\|_2^2,
\end{align}
where in (a) we used \eqref{first_a} and the Young's inequality; (b) follows from \eqref{lip}, \eqref{fourth_bis_a}, \eqref{errbd}; and in (c) we used \eqref{fourth_bis_a} and defined\begin{align*}
\beta_2\triangleq& C_2\Bigg(3\kappa^2(1+L+NL_m)^2(2N+1)+N(3+\gamma^2)\Bigg)\\&+6\kappa^2(1+L+NL_m)^2+3+\frac{\gamma^2}{2}(3B-3N+5).
\end{align*}
Following the same steps as in \eqref{fifth_bis_a}, it is not difficult to prove:
\begin{equation}\label{fifth_tris_a}
c_{i,1}^k\leq\rho L \beta_2\sum\limits_{l=k-D}^{k+B-1}\|\Delta\widehat{\mathbf{x}}^l_{i^l}\|^2_2.
\end{equation}

\noindent\textbf{3) Bounding \texttt{term V}} : there holds,
\begin{align}
&\nonumber\texttt{term V}\stackrel{(a)}{\leq}2\|\mathbf{x}^k-\mathbf{x}^\star(\mathbf{x}^k)\|_2^2+2\gamma^2\left\|\Delta\widehat{\mathbf{x}}_i^{l^k_{i,0}}\right\|_2^2\\
\nonumber&+(L_i^2+L^2(\rho-1))\Bigg(\|\Delta\widehat{\mathbf{x}}^{l^k_{i,0}}\|_2^2+D\gamma^2\sum\limits_{l=l^k_{i,0}-D}^{l^k_{i,0}-1}\|\Delta	\widehat{\mathbf{x}}_{i^l}^l\|_2^2\Bigg)\Bigg)\\
\label{eight_a}&\stackrel{(b)}{\leq}\beta_4\sum\limits_{l=k-D}^{k+B-1}\|\Delta\widehat{\mathbf{x}}_{i^l}^l\|_2^2,
\end{align}
where in (a) we used \eqref{x_tilde} and the Young's inequality; and in (b) we used \eqref{lip}, \eqref{fourth_bis_a}, \eqref{errbd}, and defined
\begin{align*}
\beta_4\triangleq&2C_2\Bigg(2\kappa^2(1+L+NL_m)^2(2N+1)\\&+N\left(L_m^2+L^2(\rho-1)\right)\Bigg)+2\kappa^2(1+L+NL_m)^2\\&+\left(L_m^2+L^2(\rho-1)\right)(1+D\gamma^2)+2\gamma^2.
\end{align*}

\noindent\textbf{4) Bounding \texttt{term VI}} : for $t\in[0,T^k_i]$,
\begin{align}
\nonumber&\texttt{term VI}\stackrel{(a)}{\leq}(L^2+L_i^2)\|\mathbf{x}^{l^k_{i,t}}(i,i)-\widehat{\mathbf{x}}^{l^k_{i,t}}\|_2^2+\frac{1}{2}\left\|\Delta\widehat{\mathbf{x}}^{l^k_{i,t}}_i\right\|_2^2\\
\nonumber&\stackrel{\eqref{x_tilde}}{\leq}(L^2+L_i^2)\left(2\left\|\Delta\widehat{\mathbf{x}}^{l^k_{i,t}}\right\|_2^2+2D\gamma^2\sum\limits_{h=l^k_{i,t}-D}^{l^k_{i,t}-1}\|\Delta\widehat{\mathbf{x}}_{i^h}^h\|_2^2\right)\\
\label{sixth_a}&\quad+\frac{1}{2}\left\|\Delta\widehat{\mathbf{x}}^{l^k_{i,t}}_i\right\|_2^2\stackrel{(b)}{\leq}\beta_3\sum\limits_{l=k-D}^{k+B-1}\|\Delta\widehat{\mathbf{x}}_{i^l}^l\|_2^2,
\end{align}
where in (a) we used A2, B2, B3, and the Young's inequality; and in (b) we used \eqref{fourth_bis_a} and defined
\begin{align*}
&\beta_3\triangleq2(L^2+L_m^2)\left(2NC_2+D\gamma^2+1\right)+\frac{1}{2}.
\end{align*}

\noindent\textbf{5) Bounding \texttt{term VII}} : for $t\in[0,T^k_i]$,
\begin{align}
\nonumber&\texttt{term VII}\!\stackrel{(a)}{\leq}\frac{1}{2}\!\!\left(\rho L^2\!\!\sum\limits_{j\in\mathcal{N}_i}\left\|\mathbf{x}^{l^k_{i,t}}-\mathbf{x}^{l^k_{i,t}}(i,j)\right\|_2^2+\!\left\|\Delta\widehat{\mathbf{x}}^{l^k_{i,t}}_i\right\|_2^2\right)\\
\nonumber&\stackrel{\eqref{x_tilde}}{\leq}\frac{1}{2}\left(\rho^2L^2D^2\gamma^2\sum\limits_{l=l^k_{i,t}-D}^{l^k_{i,t}-1}\|\Delta\widehat{\mathbf{x}}_{i^l}^l\|_2^2+\left\|\Delta\widehat{\mathbf{x}}^{l^k_{i,t}}_i\right\|_2^2\right)\\
\label{seventh_a}&\leq\frac{\rho^2L^2D^2\gamma^2+1}{2}\sum\limits_{l=k-D}^{k+B-2}\|\Delta\widehat{\mathbf{x}}_{i^l}^l\|_2^2,
\end{align}
where in (a) we used A2 and the Young's inequality.

\noindent\textbf{6) Bounding \texttt{term VIII} and \texttt{term IX}} :  for $t\in[1,T_i^k-1]$
\begin{align}
\nonumber&\texttt{term VIII}\!\stackrel{(a)}{\leq}\gamma^2\left\|\Delta\widehat{\mathbf{x}}_i^{l^k_{i,t}}\right\|_2^2\!\!+(L_i^2+L^2(\rho-1))\Bigg(\left\|\Delta\widehat{\mathbf{x}}^{l^k_{i,t}}\right\|_2^2\\
\label{ninth_a}&+D\gamma^2\sum\limits_{l=l^k_{i,t}-D}^{l^k_{i,t}-1}\|\Delta	\widehat{\mathbf{x}}_{i^l}^l\|_2^2\Bigg)\Bigg)\stackrel{(b)}{\leq}\beta_5\sum\limits_{l=k-D}^{k+B-1}\|\Delta\widehat{\mathbf{x}}_{i^l}^l\|_2^2,
\end{align}
where in (a) we used \eqref{x_tilde} and the Young's inequality; and in (b) we used \eqref{fourth_bis_a}, and defined
\begin{align*}
&\beta_5\triangleq\left(L_m^2+L^2(\rho-1)\right)\left(2NC_2+D\gamma^2+2\right)+\gamma^2.
\end{align*}
As done in \eqref{ninth_a}, it is easy to prove that
\begin{align}
&\texttt{term IX}\leq\beta_5\sum\limits_{l=k-D}^{k+B-1}\|\Delta\widehat{\mathbf{x}}_{i^l}^l\|_2^2.\label{tenth_a}
\end{align}

Using the above results, we can bound $a_{i,4}^k$, $b_{i,t,4}^k$, and $c_{i,4}^k$. According to definition of $a_{i,4}^k$, we have
\begin{equation}
a_{i,4}^k\stackrel{\eqref{fifth_a}, \eqref{sixth_a}-\eqref{eight_a}}{\leq}\alpha_1\sum\limits_{l=k-D}^{k+B-1}\|\Delta\widehat{\mathbf{x}}_{i^l}^l\|_2^2,\label{aik}
\end{equation}
where
\begin{align}
&\alpha_1\hspace{-.1cm}\triangleq\left((1-\gamma)\left(\beta_3+\frac{L\gamma(\rho^2LD^2\gamma+1)+1}{2}\right)+\rho L\beta_1+\beta_4\right)\hspace{-.1cm}.\label{alpha1}
\end{align}

For $b_{i,t,4}^k$ and $c_{i,4}^k$, we have: $t\in[1,T^k_i-1]$,
\begin{equation}
b^k_{i,i,4};c_{i,4}^k\stackrel{\eqref{fifth_bis_a}-\eqref{seventh_a}, \eqref{ninth_a},  \eqref{tenth_a}}{\leq}\alpha_2\sum\limits_{l=k-D}^{k+B-1}\|\Delta\widehat{\mathbf{x}}_{i^l}^l\|_2^2,\label{bcik}
\end{equation}
where
\begin{align}
&\alpha_2\triangleq\left((1-\gamma)\left(\beta_3+\frac{L\gamma(\rho^2LD^2\gamma+1)+1}{2}\right)+\rho L\beta_2+\beta_5\right).\label{alpha2}
\end{align}

 Combining \eqref{begin_a}, \eqref{one_a}, \eqref{two_a}, \eqref{three_a}, \eqref{aik}, and \eqref{bcik} yields:
\begin{align}
\nonumber&V(\mathbf{x}^{k+B})-V(\mathbf{x}^\star(\mathbf{x}^k))\leq\frac{1-\gamma}{\gamma}\left(V(\mathbf{x}^k)-V(\mathbf{x}^{k+B})\right)\\
\nonumber&+\sum\limits_{i=1}^N\Bigg(\gamma\left(g_i\left(\mathbf{x}_i^{l^k_{i,T^k_i}}\right)-g_i\left(\widehat{\mathbf{x}}_i^{l^k_{i,T^k_i}}\right)\right)\\
\nonumber&+\gamma\sum\limits_{t=1}^{T^k_i-1}\left(g_i\left(\mathbf{x}_i^{l^k_{i,t}}\right)-g_i\left(\widehat{\mathbf{x}}_i^{l^k_{i,t}}\right)\right)+(\gamma-1)g_i\left(\mathbf{x}_i^{l^k_{i,0}}\right)\\
\nonumber&-\gamma g_i\left(\widehat{\mathbf{x}}_i^{l^k_{i,0}}\right)+g_i\left(\mathbf{x}_i^{k+B}\right)\Bigg)+(N\alpha_1\\
\nonumber&+(B-N)\alpha_2)\sum\limits_{l=k-D}^{k+B-1}\|\Delta\widehat{\mathbf{x}}_{i^l}^l\|_2^2\stackrel{A3}{\leq}\frac{1-\gamma}{\gamma}\big(V(\mathbf{x}^k)\\
\nonumber&-V(\mathbf{x}^{k+B})\big)+\sum\limits_{i=1}^N\Bigg(\gamma\left(g_i\left(\mathbf{x}_i^{l^k_{i,T^k_i}}\right)-g_i\left(\widehat{\mathbf{x}}_i^{l^k_{i,T^k_i}}\right)\right)\\
\nonumber&+\gamma\sum\limits_{t=1}^{T^k_i-1}\left(g_i\left(\mathbf{x}_i^{l^k_{i,t}}\right)-g_i\left(\widehat{\mathbf{x}}_i^{l^k_{i,t}}\right)\right)+(\gamma-1)g_i\left(\mathbf{x}_i^{l^k_{i,0}}\right)\\
\nonumber&-\gamma g_i\left(\widehat{\mathbf{x}}_i^{l^k_{i,0}}\right)+(1-\gamma)g_i\left(\mathbf{x}_i^{l^k_{i,T^k_i}}\right)+\gamma g_i\left(\widehat{\mathbf{x}}_i^{l^k_{i,T^k_i}}\right)\Bigg)\\
\nonumber&+\left(N\alpha_1+(B-N)\alpha_2)\right)\sum\limits_{l=k-D}^{k+B-1}\|\Delta\widehat{\mathbf{x}}_{i^l}^l\|_2^2\\
\nonumber&=\frac{1-\gamma}{\gamma}\left(V(\mathbf{x}^k)-V(\mathbf{x}^{k+B})\right)+\sum\limits_{i=1}^N\Bigg(g_i\left(\mathbf{x}_i^{l^k_{i,T^k_i}}\right)\\
\nonumber&\quad+\gamma\sum\limits_{t=1}^{T^k_i-1}\left(g_i\left(\mathbf{x}_i^{l^k_{i,t}}\right)-g_i\left(\widehat{\mathbf{x}}_i^{l^k_{i,t}}\right)\right)\\
&\nonumber\quad+(\gamma-1)g_i\left(\mathbf{x}_i^{l^k_{i,0}}\right)-\gamma g_i\left(\widehat{\mathbf{x}}_i^{l^k_{i,0}}\right)+(N\alpha_1\\
\nonumber&\quad+(B-N)\alpha_2)\sum\limits_{l=k-D}^{k+B-1}\|\Delta\widehat{\mathbf{x}}_{i^l}^l\|_2^2\stackrel{A3}{\leq}\frac{1-\gamma}{\gamma}\big(V(\mathbf{x}^k)\\
\nonumber&-V(\mathbf{x}^{k+B})\big)+\sum\limits_{i=1}^N\Bigg((1-\gamma)g_i\left(\mathbf{x}_i^{l^k_{i,T^k_i-1}}\right)+\gamma g_i\left(\widehat{\mathbf{x}}_i^{l^k_{i,T^k_i-1}}\right)\\
\nonumber&+\gamma\sum\limits_{t=1}^{T^k_i-1}\left(g_i\left(\mathbf{x}_i^{l^k_{i,t}}\right)-g_i\left(\widehat{\mathbf{x}}_i^{l^k_{i,t}}\right)\right)\\
\nonumber&+(\gamma-1)g_i\left(\mathbf{x}_i^{l^k_{i,0}}\right)-\gamma g_i\left(\widehat{\mathbf{x}}_i^{l^k_{i,0}}\right)\\
\nonumber&+\left(N\alpha_1+(B-N)\alpha_2\right)\sum\limits_{l=k-D}^{k+B-1}\|\Delta\widehat{\mathbf{x}}_{i^l}^l\|_2^2\\
\nonumber&=\frac{1-\gamma}{\gamma}\left(V(\mathbf{x}^k)-V(\mathbf{x}^{k+B})\right)+\sum\limits_{i=1}^N\Bigg(g_i\left(\mathbf{x}_i^{l^k_{i,T^k_i-1}}\right)\\
\nonumber&\quad+\gamma\sum\limits_{t=1}^{T^k_i-2}\left(g_i\left(\mathbf{x}_i^{l^k_{i,t}}\right)-g_i\left(\widehat{\mathbf{x}}_i^{l^k_{i,t}}\right)\right)\\
\nonumber&\quad+(\gamma-1)g_i\left(\mathbf{x}_i^{l^k_{i,0}}\right)-\gamma g_i\left(\widehat{\mathbf{x}}_i^{l^k_{i,0}}\right)\\
\nonumber&\quad+\left(N\alpha_1+(B-N)\alpha_2\right)\sum\limits_{l=k-D}^{k+B-1}\|\Delta\widehat{\mathbf{x}}_{i^l}^l\|_2^2\leq\frac{1-\gamma}{\gamma}\big(V(\mathbf{x}^k)\\
&-V(\mathbf{x}^{k+B})\big)+\left(N\alpha_1+(B-N)\alpha_2\right)\sum\limits_{l=k-D}^{k+B-1}\|\Delta\widehat{\mathbf{x}}_{i^l}^l\|_2^2.
\end{align}
\bibliographystyle{IEEEtran}
\bibliography{biblio}

\vspace{-1.3cm}

\begin{IEEEbiography}[{\includegraphics[width=1in,height=1.25in,clip,keepaspectratio]{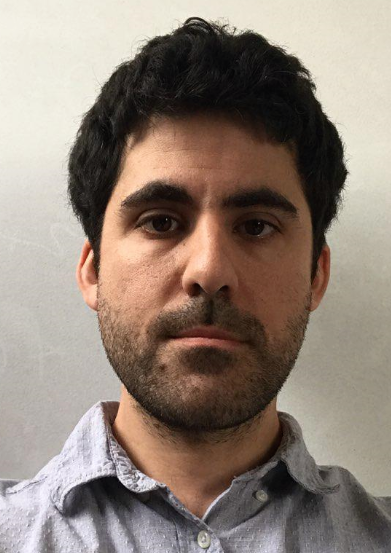}}]{Loris Cannelli} received his B.S. and M.S. in electrical and	telecommunication engineering from the University of Perugia, Italy, his M.S. in electrical engineering from State	University of New York at Buffalo, NY, and his Ph.D. in industrial engineering from the Purdue University, West Lafayette, IN, USA. His research interests include optimization algorithms, machine learning, and big-data analytics.\vspace{-1.4cm}
\end{IEEEbiography}

\begin{IEEEbiography}[{\includegraphics[width=1in,height=1.25in,clip,keepaspectratio]{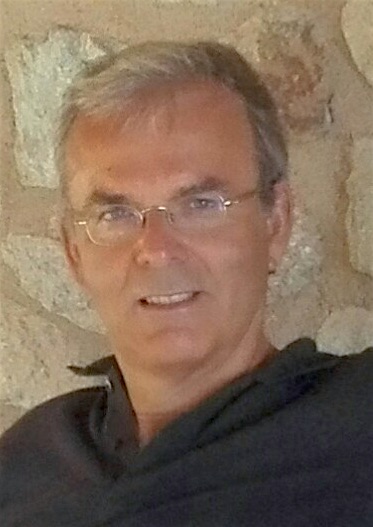}}]{Francisco Facchinei} received the Ph.D. degree in	system engineering from the University of Rome, ``La Sapienza,'' Rome, Italy. He is a Full Professor of	operations research, Engineering Faculty, University	of Rome, ``La Sapienza.'' His research interests focus
	on theoretical and algorithmic issues related to nonlinear optimization, variational inequalities, complementarity problems, equilibrium programming, and	computational game theory.\vspace{-1.4cm}
\end{IEEEbiography}

\begin{IEEEbiography}[{\includegraphics[width=1in,height=1.25in,clip,keepaspectratio]{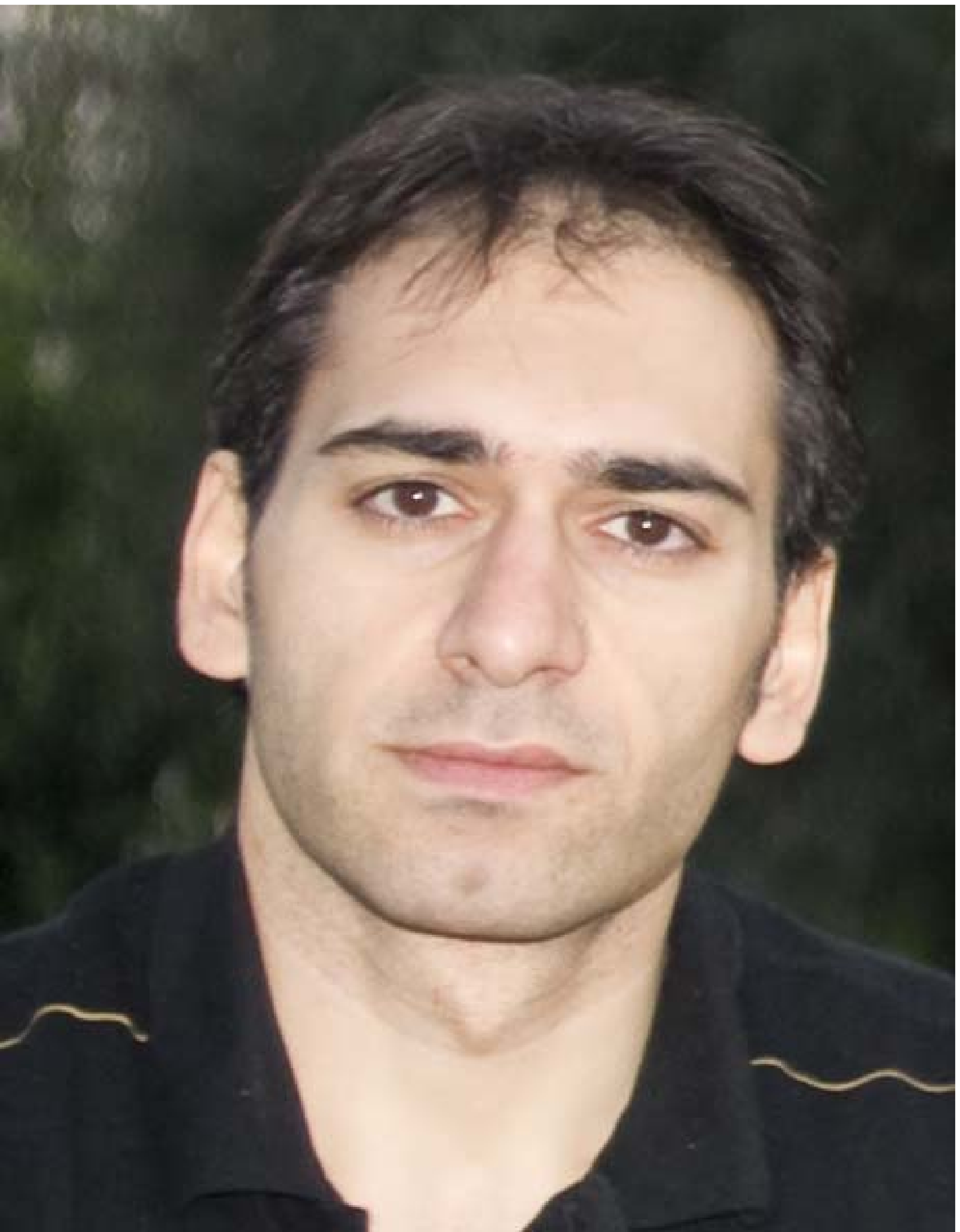}}]{Gesualdo Scutari}
	(S'05-M'06-SM'11) received the Electrical Engineering and Ph.D. degrees (both with honors)
	from the University of Rome ``La Sapienza,'' Rome, Italy, in 2001 and 2005, respectively.
	He is the Thomas and Jane Schmidt Rising Star Associate Professor  with the School of Industrial Engineering, Purdue University, West Lafayette, IN, USA. 
	He had previously held several research appointments, namely, at the University of California at
	Berkeley, Berkeley, CA, USA; Hong Kong University of Science and Technology, Hong Kong;
	and University of Illinois at Urbana-Champaign,
	Urbana, IL, USA.  His research interests include continuous and distributed optimization,   equilibrium programming, and their applications to signal processing and
	machine learning.
	He is a Senior Area Editor of the IEEE Transactions On Signal Processing and an Associate Editor of the IEEE Transactions on Signal and Information Processing over Networks. 
	He served on the IEEE Signal Processing Society Technical Committee on Signal Processing for Communications (SPCOM).
	He was the recipient of the 2006 Best Student Paper Award at the IEEE ICASSP 2006, the 2013 NSF  CAREER Award,  the 2015 Anna Maria Molteni Award for Mathematics and Physics, and the 2015 IEEE Signal Processing Society Young Author Best Paper Award.\vspace{-1.4cm}
\end{IEEEbiography}

\begin{IEEEbiography}[{\includegraphics[width=1in,height=1.25in,clip,keepaspectratio]{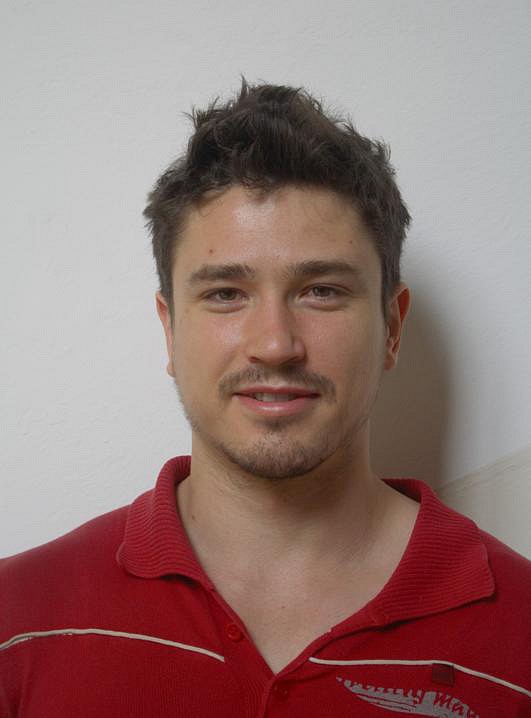}}]{Vyacheslav Kungurtsev} received his B.S. in Mathematics from Duke University in 2007, and his PhD in Mathematics with a specialization in Computational Science from the University of California - San Diego, in 2013.
	He spent one year as postdoctoral researcher at KU Leuven for the Optimization for Engineering Center, and since 2014 he has been a Researcher at Czech Technical University in Prague working on various aspects of continuous optimization.
\end{IEEEbiography}
\end{document}